\documentclass[10pt]{amsart}
\usepackage{eepic}
\usepackage{epsfig}
\usepackage{graphicx}
\usepackage{eufrak}
\usepackage{mathrsfs}
\usepackage[english]{babel}
\usepackage{color}

\usepackage{palatino, mathpazo}
\usepackage{amscd}      
\usepackage{amssymb}
\usepackage{amsmath}
\usepackage{dsfont}
\usepackage{xypic}      
\LaTeXdiagrams          
\usepackage[all,v2]{xy}
\xyoption{2cell} \UseAllTwocells \xyoption{frame} \CompileMatrices
\usepackage{latexsym}
\usepackage{epsfig}
\usepackage{amsfonts}
\usepackage{enumerate}

\allowdisplaybreaks[3]

\newtheorem{prop}{Proposition}[section]
\newtheorem{lem}[prop]{Lemma}

\newtheorem{cor}[prop]{Corollary}
\newtheorem{thm}[prop]{Theorem}
\newtheorem{rmk}[prop]{Remark}


\newtheorem{defn}[prop]{Definition}

\newtheorem{example}{Example}
            {\nolinebreak $\Box$ \end{trivlist}}

\newcommand{\noprint}[1]{}

\newcommand{\Ext}{\mbox{Ext}}

\newcommand{\Hom}{\mbox{Hom}}

\newcommand{\virt}{\mbox{\tiny virt}}

\newcommand{\zz}{{\mathbb Z}}

\newcommand{\aaa}{{\mathbb A}}

\renewcommand{\ll}{{\mathbb L}}

\newcommand{\cc}{{\mathbb C}}
\newcommand{\G}{{\mathbb G}}

\newcommand{\ff}{{\mathbb F}}

\newcommand{\sS}{{\mathcal S}}

\newcommand{\oO}{{\mathcal O}}

\newcommand{\Coh}{\mbox{Coh}}

\DeclareMathOperator{\loc}{loc}

\DeclareMathOperator{\id}{id}
\DeclareMathOperator{\Crit}{Crit}

\DeclareMathOperator{\DT}{DT}

\DeclareMathOperator{\Ob}{Ob}
\DeclareMathOperator{\eu}{Eu}
\DeclareMathOperator{\vd}{vd}

\newcommand{\bX}{{\mathbf X}}
\newcommand{\bt}{{\mathbf t}}

\newcommand{\Proj}{\mathop{\rm\bf Proj}\nolimits}

\newcommand{\red}{\mathop{\rm red}\nolimits}

\newcommand{\spec}{\mathop{\rm Spec}\nolimits}

\newcommand{\Sym}{\mathop{\rm Sym}\nolimits}
\newcommand{\proj}{\mathop{\rm Proj}\nolimits}

\numberwithin{equation}{subsection}

\hyphenpenalty=6000 \tolerance=10000

\def\Label{\label}


\title[Symmetric semi-perfect obstruction theory]{Symmetric semi-perfect obstruction theory revisited}
\author{Yunfeng Jiang}

\address{Department of Mathematics\\ University of Kansas\\ 405 Snow Hall 1460 Jayhawk Blvd\\Lawrence KS 66045 USA} 
\email{y.jiang@ku.edu}

\keywords{symmetric semi-perfect obstruction theory, the Behrend function, algebraic d-critical scheme, virtual signed Euler characteristics}        

\subjclass[2010]{Primary 14N35; Secondary 53D45}

\begin{document}
\sloppy \maketitle
\begin{abstract}
In this paper we survey some results on the symmetric semi-perfect obstruction theory on a Deligne-Mumford stack $X$ constructed by Chang-Li, and  Behrend's theorem equating the weighted Euler characteristic of  
$X$ and the virtual count of $X$ by symmetric semi-perfect obstruction theories.  
As an application, we prove that Joyce's d-critical scheme admits a symmetric semi-perfect obstruction theory, which can be applied to the  virtual Euler characteristics by Jiang-Thomas.   
\end{abstract}

\maketitle

\tableofcontents

\section{Introduction}

\subsection{Symmetric semi-perfect obstruction theory}

Perfect obstruction theories and virtual fundamental cycles are powerful tools to define counting invariants in modern enumerative geometry.  The construction was given by Li-Tian \cite{LT}, Behrend-Fantechi \cite{BF} in algebraic geometry.  The techniques apply to both Gromov-Witten theory and Donadson-Thomas theory.  
However there exist some  moduli problems such that the perfect obstruction theory only locally exists.  
In \cite{CL}, Chang-Li generlize and define semi-perfect obstruction theory which is locally given by a perfect obstruction theory.  The local virtual cycles glue to give a global virtual fundamental cycle.   Examples of schemes admitting semi-perfect obstruction theory include the moduli spaces of derived objects in the derived category of coherent sheaves on a Calabi-Yau threefold.

In \cite{Behrend}, Behrend introduced the ``symmetric obstruction theory" in order to prove that the Donaldson-Thomas invariants on a Calabi-Yau threefold are motivic invariants.   The symmetry on the obstruction theory  implies special property for the obstruction theory, see \S \ref{subsec_symmetric_SPOT}.   Behrend proved that the virtual count of the symmetric obstruction theory on a scheme or Deligne-Mumford stack $X$ is a weighted Euler characteristic of $X$ weighted by the ``Behrend function", see \S \ref{sec_Behrend_theorem} and more details on the Behrend function is included in \cite{Jiang2}.

In this paper we survey some results for symmetric semi-perfect obstruction theories, and  Behrend's theorem on equating the virtual count to the weighted Euler characteristic by the Behrend function.  

\subsection{Application to Donaldson-Thomas invariants}
Let $Y$ be a smooth Calabi-Yau threefold or a smooth threefold Calabi-Yau Deligne-Mumford stack. The Donaldson-Thomas invariants of $Y$ count stable coherent sheaves.  R. Thomas   \cite{Thomas} constructed a perfect obstruction theory $E^\bullet$ in the sense of Li-Tian \cite{LT}, and Behrend-Fantechi \cite{BF} on the moduli space $X$ of stable sheaves over $Y$.   If $X$ is proper, then the virtual dimension of $X$ is zero, and the integral
$\DT_Y=\int_{[X]^{\virt}}1$
is the Donaldson-Thomas invariant of $Y$. In \cite{Behrend} Behrend proved
 that the moduli scheme $X$ of stable sheaves on $Y$ admits a symmetric obstruction theory.  Let  
$$\nu_{X}: X\to\zz$$
be the Behrend function 
on $X$.   If $X$ is proper, then in \cite[Theorem 4.18]{Behrend} Behrend proved that 
$\DT_Y=\int_{[X]^{\virt}}1=\chi(X,\nu_{X})$, 
where $\chi(X,\nu_{X})$ is the weighted Euler characteristic weighted by the Behrend function.  Same result for a proper Deligne-Mumford stack $X$ with a symmetric perfect obstruction theory is conjectured by Behrend in \cite{Behrend}, and  is proved in \cite{Jiang3}.

\subsection{$D$-critical schemes}

In \cite{BBJ} and \cite{BBBJ}, Joyce etc studied the classical underlying scheme of a $(-1)$-shifted derived symplectic scheme defined in \cite{PTVV}.  
In \cite{Joyce}, Joyce introduced the notion of $d$-critical schemes or $d$-critical analytic spaces, where the underlying scheme of a $(-1)$-shifted derived symplectic scheme defined in \cite{PTVV} is a $d$-critical scheme. 
For instance, 
the moduli space $X$ of stable coherent sheaves or simple complexes on a Calabi-Yau threefold $Y$ can be lifted to a $(-1)$-shifted derived symplectic scheme, and is a $d$-critical scheme.  But the notion of $d$-critical schemes is more general than the underlying scheme of a $(-1)$-shifted derived symplectic scheme.  Kiem-Li \cite{KL1} introduced ``virtual critical manifolds", and proved that they are equivalent to $d$-critical analytic spaces. The notion of $d$-critical schemes or virtual critical manifolds are important to the categorification of Donaldson-Thomas invariants. 

We provide the detail proof that a $d$-critical scheme $X$ admits a symmetric semi-perfect obstruction theory. Kiem-Li \cite{KL1} have already proved that a virtual critical manifold admits a symmetric semi-perfect obstruction theory.  All results here in this section are due to Kiem-Li.

\subsection{Virtual signed Euler characteristics}

Motivated by the cotangent theory of Costello \cite{Costello} and Gromov-Witten theory with p-fields in \cite{CL1},  in  \cite{JT} R. Thomas and  the author defined four virtual signed Euler characteristics on the dual obstruction sheaf $N=\Ob^*_X$ of a perfect obstruction theory on a scheme $X$.   The abelian cone $N$ locally is given by the critical locus of a regular function on a higher dimensional  smooth scheme, and $N$ is a $d$-critical scheme of Joyce \cite{Joyce}.  In \cite{JT}, the scheme $X$ with a perfect obstruction theory $E_X^\bullet$ is assumed to the underlying scheme of a quasi-smooth derived scheme so that on the cone $N$ there exists a symmetric obstruction theory by pullback from the derived cotangent bundle of $(X, E_X^\bullet)$.   Since $N$ is a $d$-critical scheme, $N$ naturally admits a symmetric semi-perfect obstruction theory, and we still can define the four virtual signed Euler characteristics in \cite{JT}.  Note that in \cite{Kiem}, Kiem has already proved that on $N$ there is a semi-perfect obstruction theory.  We also talk about a natural compactification $\overline{N}$ of $N$ by taking the projective cone of $\Ob_X$, and show that $\overline{N}$ is also a $d$-critical scheme under some conditions. 
This may have applications for the Vafa-Witten invariants as developed in \cite{TT1}. 

In a different situation, if $X$ is a scheme or a Deligne-Mumford stack admitting a symmetric semi-perfect obstruction theory,  and furthermore there exists a $\G_m$-action on $X$ such that the semi-perfect obstruction theory is $\G_m$-equivariant, we show that the Behrend's weighted Euler characteristic of $X$ is the same as the Kiem-Li cosection localized invariants of the $\G_m$-action.  This generalizes the result in \cite{Jiang2} to symmetric semi-perfect obstruction theories. 

\subsection{Outline}

 The rough structure of the paper is as follows.  We review the basic materials about symmetric semi-perfect obstruction theories in  \S \ref{sec:SPOT}, where  \S \ref{subsec:POT} goes over the perfect obstruction theory and obstruction space;  and in \S \ref{subsec_symmetric_SPOT}  the basic notion of symmetric semi-perfect obstruction theory is reviewed.  In \S \ref{sec_Behrend_theorem} we prove  Theorem \ref{thm_Behrend_Euler_semiPOT} that equating the weighted Euler characteristic to the virtual count of a symmetric semi-perfect obstruction theory.  
We apply the result in the theorem to $d$-critical schemes in \S \ref{sec_d_critical_schemes};  where 
in \S \ref{subsec_d_critical_scheme} we review Joyce's $d$-critical schemes, in 
\S \ref{subsec_SPOT_d_critical_schemes} we provide detail proof for a $d$-critical scheme admitting a symmetric semi-perfect obstruction theory; and finally 
in \S \ref{sec_Virtual_Euler}   two special cases of virtual count are generalized to symmetric semi-perfect obstruction theories. 

\subsection*{Convention}

Throughout the paper we work over an  algebraically closed field $\kappa$ of character zero.  
We use $\G_m$ to represent the one parameter multiplication group of $\aaa_{\kappa}^1$.
For a scheme or a Deligne-Mumford stack $X$, $L_X:=\ll^{\geq -1}_{X}$ represents the truncated cotangent complex, where $\ll_X$ is the full cotangent complex.  
We use the notation $E^{-i} := (E_i)^*$ for dual vector bundles, and reserve $\vee$ for the derived dual of coherent sheaves and complexes.

\subsection*{Acknowledgments}

The author would like to thank Huai-Liang Chang, Felix Janda and Richard Thomas for valuable discussions on symmetric semi-perfect obstruction theories. 
Many thanks for the hospitality of Huai-Liang Chang when visiting  HKUST in the summer 2018 where part of the work was done. 
This work is partially supported by  NSF Grant DMS-1600997.

\section{Symmetric semi-perfect obstruction theory}\label{sec:SPOT}

In this section we review some basic materials about the symmetric semi-perfect obstruction theory in \cite{CL}, \cite{Behrend}.  

\subsection{Perfect obstruction theory and obstruction spaces}\label{subsec:POT}
 
\begin{defn}\label{defn:semi_POT}
Let $X_\alpha$ be a scheme.  A perfect obstruction theory of $X_\alpha$ consists of a derived category morphism:
$$\phi: E\to L_{X_\alpha}$$
such that 
\begin{enumerate}
\item $E\in D(\oO_{X_\alpha})$ is perfect, of perfect amplitude contained in the interval $[-1, 0]$,
\item $\phi$ induces an isomorphism on $h^0$ and an epimorphism on $h^{-1}$.
\end{enumerate}
The sheaf $\Ob_{\phi}:=h^1(E^{\vee})$ is called the ``obstruction sheaf". 
\end{defn}

From \cite{BF},  the intrinsic normal cone $\mathbf{c}_{X_{\alpha}}$ is defined as follows: whenever there is a closed immersion $X_\alpha\to M$ into a smooth scheme $M$, $\mathbf{c}_{X_{\alpha}}\cong [C_{X_{\alpha}/M}/T_{M}|_{X_\alpha}]$, where $C_{X_\alpha/M}$ is the normal cone and $T_M|_{X_\alpha}$ is the restriction of the tangent bundle $T_M$ to $X_\alpha$ which acts on the normal cone.  The morphism $\phi: E\to L_{X_\alpha}$ defines a closed immersion of cone stacks
$$\mathbf{c}_{X_{\alpha}}\subset N_{X_{\alpha}}:=h^1/h^0(L_{X_\alpha}^{\vee}).$$
Let $[\mathbf{c}_{X_{\alpha}}]\in Z_*(N_{X_\alpha})$ be the associated cycle and let 
$$h^1/h^0(\phi): N_{X_\alpha}\hookrightarrow h^1/h^0(E^{\vee})$$
be the morphism 
 induced by the truncated perfect obstruction theory. 

\subsubsection{Lifting problem}

We recall the classical lifting problem \cite[\S 4]{BF}, \cite[Definition 2.5]{CL}. 

\begin{defn}\label{defn:lifting}
Let $\iota: T\to \overline{T}$ be a closed subscheme with $\overline{T}$ local Artinian.  Let $I$ be the ideal of $T$ in 
$\overline{T}$, and let $\mathfrak{m}$ be the ideal sheaf of the closed point of $\overline{T}$.  We call $\iota$ a small extension if $I\cdot \mathfrak{m}=0$. Given a small extension $(T, \overline{T}, I, \mathfrak{m})$ fitting into the commutative diagram:
\begin{equation}\label{diagram_lifting}
\xymatrix{
T\ar[r]^{g}\ar[d]& X_{\alpha}\ar[d]\\
\overline{T}\ar@{-->}[ur]\ar[r]& \spec(\kappa)
}
\end{equation}
so that $Im(g)$ contains a closed point $p\in X_\alpha$.  

Finding a morphism $\overline{g}: \overline{T}\to X_\alpha$ making the diagram (\ref{diagram_lifting}) commute is called the ``infinitesimal lifting problem of $X_\alpha$ at $p$". 
\end{defn}

Standard obstruction theory tells us, see \cite[Chapter 3, Theorem 2.1.7]{Ill}, \cite[Lemma 2.6]{CL}, that for an infinitesimal lifting problem of $X_\alpha$ at $p$ as in (\ref{diagram_lifting}), there exists a canonical element 
$$\omega(g, T, \overline{T})\in \Ext^1(g^\star L_{X_\alpha}, I)=T_{p, X_{\alpha}}\otimes_{\kappa}I$$
where $T_{p, X_{\alpha}}=h^1(L_{X_\alpha}^{\vee}|_{p})$ is the intrinsic obstruction space to deforming $p\in X_\alpha$. 
The vanishing of $\omega(g, T, \overline{T})$ is necessary and sufficient for the lifting problem to be solvable, in which case the collection of the solutions form a torsor under 
$\Ext^0(g^\star L_{X_\alpha}, I)=\Hom(g^\star\Omega_{X_\alpha}, I)$. 
\begin{rmk}
Recall that if $\phi: E\to L_{X_\alpha}$ is a perfect obstruction theory, then the obstruction space (of the obstruction theory $\phi$) to deforming $p\in X_\alpha$ is defined to be $\Ob(\phi, p)=h^1(E^\vee|_{p})$. 
\end{rmk}

\begin{defn}\label{defn:obstruction:class}
Let $\phi: E\to L_{X_\alpha}$ be  a perfect obstruction theory. For an infinitesimal lifting problem (\ref{diagram_lifting}), the image 
$$\Ob(\phi, g, T, \overline{T}):=h^1(\phi^\vee)(\omega(g, T, \overline{T}))\in \Ext^1(g^\star E, I)=\Ob(\phi,p)\otimes_{\kappa}I$$
is called ``the obstruction class" of $\phi$ to the lifting problem (\ref{diagram_lifting}). 
\end{defn}

From \cite[Theorem 4.5]{BF}, the obstruction class $\Ob(\phi, g, T, \overline{T})=0$ if and only if the lifting problem (\ref{diagram_lifting}) is solvable. 

Let $\phi: E\to L_{X_\alpha}$ and $\phi^\prime: E^\prime\to L_{X_\alpha}$ be two perfect obstruction theories. 

\begin{defn}\label{defn_nu_equivalence}
We call $\phi$ is $\nu$-equivalent to $\phi^\prime$ if there exists an isomorphism of sheaves 
\begin{equation}\label{eqn_nu}
\psi: h^1(E^\vee)\stackrel{\cong}{\longrightarrow} h^1(E^{\prime \vee})
\end{equation}
such that for every closed point $p\in X_\alpha$, and for any ``infinitesimal lifting problem" of $X_\alpha$ at $p$ as in the diagram (\ref{diagram_lifting}), we have
$$\psi|_{p}(\Ob(\phi, g, T,\overline{T}))=\Ob(\phi^\prime, g, T, \overline{T})\in \Ob(\phi^\prime ,p)\otimes_{\kappa}I.$$
\end{defn}

As in \cite[Formula (2.4)]{CL}, let 
$$\eta_{\phi}: N_{X_\alpha}\to h^1/h^0(E^\vee)\to h^1(E^\vee)=\Ob_{\phi}$$
be the composition morphism, where the first arrow is an embedding of the intrinsic normal sheaf to the bundle stack for the obstruction theory $\phi: E\to L_{X_\alpha}$.  Let 
$$[\mathbf{c}_{\phi}]=\eta_{\phi \star}[\mathbf{c}_{X_\alpha}]$$
be the image of the intrinsic normal cone $\mathbf{c}_{X_\alpha}$.

\begin{prop}(\cite[Proposition 2.10]{CL})
If $\phi$ and $\phi^\prime$ are two $\nu$-equivalent perfect obstruction theories, and let 
$$\eta_\phi: N_{X_\alpha}\to h^1(E^\vee)=\Ob_{\phi}$$
be the morphism above, then for any integral cycle $A\subset N_{X_\alpha}$, we have 
$$\psi_{\star}(\eta_{\phi \star}[A])=\eta_{\phi^\prime \star}[A]\in Z_*(h^1(E^{\prime \vee}))=Z_*(\Ob_{\phi^\prime}).$$
\end{prop}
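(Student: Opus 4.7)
The plan is to upgrade the cycle-level equality to a morphism-level one: I will show that $\psi\circ\eta_\phi = \eta_{\phi'}$ as morphisms $N_{X_\alpha}\to \Ob_{\phi'}$ of cone stacks, whence $\psi_*(\eta_{\phi*}[A])=\eta_{\phi'*}[A]$ for every integral cycle $A\subset N_{X_\alpha}$ follows by functoriality of cycle pushforward under this single identity of morphisms.

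To verify the morphism-level equality I would first pass to an \'etale chart of $X_\alpha$ on which $L_{X_\alpha}^{\geq -1}$ admits a two-term locally free resolution and both $E, E'$ are presented as complexes of locally free sheaves in degrees $[-1,0]$. In this local model $\eta_\phi$ factors as
$$N_{X_\alpha}\;\hookrightarrow\; h^1/h^0(E^\vee)\;\twoheadrightarrow\; h^1(E^\vee)=\Ob_\phi,$$
with the first arrow the cone-stack embedding induced by $h^1/h^0(\phi^\vee)$ and the second the coarsening from the quotient stack to its cokernel sheaf; similarly for $\phi'$. Since the target $\Ob_{\phi'}$ is a sheaf on $X_\alpha$, it suffices to compare the two morphisms at each closed point $p\in X_\alpha$. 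There the fiber $N_{X_\alpha}|_p$ has coarse moduli space $T_{p,X_\alpha}=h^1(L_{X_\alpha}^\vee|_p)$, and $\eta_\phi|_p$ agrees with the linear map $h^1(\phi^\vee)|_p : T_{p,X_\alpha}\to \Ob(\phi,p)$.

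The key input is the $\nu$-equivalence hypothesis of Definition \ref{defn_nu_equivalence}. By the very definition of the obstruction class,
$$\Ob(\phi,g,T,\overline T) = \bigl(h^1(\phi^\vee)|_p\otimes \id_I\bigr)\bigl(\omega(g,T,\overline T)\bigr)$$
for every small extension $(T,\overline T,I,\mathfrak m)$ and every morphism $g:T\to X_\alpha$ hitting $p$. The equality $\psi|_p(\Ob(\phi,g,T,\overline T))=\Ob(\phi',g,T,\overline T)$ therefore forces $\psi|_p\circ h^1(\phi^\vee)|_p$ and $h^1(\phi'^\vee)|_p$ to agree on every element of $T_{p,X_\alpha}\otimes I$ of the form $\omega(g,T,\overline T)$. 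A standard realizability argument -- producing, for each $v\in T_{p,X_\alpha}$, an explicit Artinian small extension whose obstruction equals $v\otimes 1$ (for instance by writing $X_\alpha$ locally as the vanishing of a regular sequence inside an ambient smooth scheme and choosing extensions along individual defining equations) -- shows that such obstructions span $T_{p,X_\alpha}\otimes I$ as $(g,T,\overline T,I)$ varies. This pins down $\psi|_p\circ h^1(\phi^\vee)|_p = h^1(\phi'^\vee)|_p$ as linear maps, yielding the pointwise identity $\psi|_p\circ\eta_\phi|_p=\eta_{\phi'}|_p$ and hence the global equality of morphisms.

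The main obstacle is precisely this realizability step: one must guarantee that obstruction classes arising from genuine lifting problems sweep out all of $T_{p,X_\alpha}$ (tensored with sufficiently many ideals $I$). Once that is secured, the remainder of the argument is routine bookkeeping -- reducing the cycle-level statement to the morphism-level identity and invoking functoriality of pushforward through $N_{X_\alpha}\xrightarrow{\eta_{\phi'}} \Ob_{\phi'}$.
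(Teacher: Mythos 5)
First, a caveat: the paper does not prove this proposition at all --- it is quoted verbatim from Chang--Li with the citation [CL, Proposition 2.10] and no argument is given --- so your proposal can only be measured against the standard proof in that reference. Your overall strategy (reduce the cycle identity to agreement of the two fibre maps $\psi|_p\circ h^1(\phi^\vee)|_p = h^1(\phi'^{\vee})|_p$ at closed points $p$, and extract that agreement from the $\nu$-equivalence hypothesis) is the right one and is essentially how Chang--Li argue. But the proposal has a genuine gap exactly at the step you yourself flag as ``the main obstacle'': the realizability of every element of $T_{p,X_\alpha}\otimes_\kappa I$ as an obstruction class $\omega(g,T,\overline T)$ of an actual lifting problem is asserted, not proved, and the route you sketch for it does not work. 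A general $X_\alpha$ is not a local complete intersection, so it cannot be ``written locally as the vanishing of a regular sequence,'' and extensions ``along individual defining equations'' only produce special (curvilinear-type) classes, which need not span the obstruction space. The correct construction, which carries the real content of the proposition, is the universal small extension: embed $X_\alpha$ locally in a smooth $M$ with ideal $I\subset A=\oO_{M,p}$ and take $T=\spec\bigl(A/(I+\mathfrak m^N)\bigr)\subset\overline T=\spec\bigl(A/(\mathfrak m I+I^2+\mathfrak m^N)\bigr)$ for $N\gg 0$; by Artin--Rees its ideal is $I/(\mathfrak m I+I^2)=(I/I^2)|_p$, its obstruction class is the tautological element of $\Hom(I/I^2|_p,\,I/I^2|_p)$ projected into $T_{p,X_\alpha}\otimes (I/I^2|_p)$, and pushing forward along arbitrary functionals $v\colon I/I^2|_p\to\kappa$ (using functoriality of $\omega$ in the ideal) realizes every element of $T_{p,X_\alpha}$. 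Without this, or an equivalent construction, the argument does not close.

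A second, smaller defect: the morphism-level identity $\psi\circ\eta_\phi=\eta_{\phi'}$ you aim for is stronger than the hypotheses can deliver. The $\nu$-equivalence condition is purely a closed-point condition, and on a non-reduced $X_\alpha$ two sheaf morphisms can agree on every closed fibre without being equal (multiplication by $x$ on $\oO_{\spec \kappa[x]/(x^2)}$ is the standard example), so your reduction ``it suffices to compare at each closed point $p$'' is false at the level of morphisms. What you actually need, and what suffices for the proposition, is equality after restriction to the reduced integral cycle $A$ and to its pullbacks in local vector-bundle presentations $F\twoheadrightarrow \Ob_\phi$ (which is where cycles in the sheaf stack $\Ob_\phi$ are actually defined): the two pulled-back cycles in $F$ are integral, closed points are dense in them, and pointwise agreement then forces equality of supports and multiplicities. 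Rerouting your argument through this restricted statement, together with the realizability construction above, would complete the proof.
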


Let $\mathbf{c}_{X_\alpha}$ be the intrinsic normal cone of $X_\alpha$. Then 
\begin{align*}
\psi_{\star}(\eta_{\phi \star}[\mathbf{c}_{X_\alpha}])
&=\psi_{\star}([\mathbf{c}_{\phi}])\\
&=\eta_{\phi^\prime \star}[\mathbf{c}_{X_\alpha}]\\
&=[\mathbf{c}_{\phi^\prime}].
\end{align*}

\subsection{Symmetric semi-perfect obstruction theory}\label{subsec_symmetric_SPOT}

Let $X$ be a Deligne-Mumford stack of locally of finite type.  For any two {\'etale morphisms 
$X_\alpha, X_\beta\to X$ we let $X_{\alpha\beta}=X_\alpha\times _{X}X_{\beta}$, and for any 
object $F\in D^b(X_\alpha)$ in the derived category, $F|_{X_{\alpha\beta}}$ is  the pullback of $F$ under the projection 
$X_{\alpha\beta}\to X_\alpha$. 

\begin{defn}(\cite[Definition 3.1]{CL})\label{defn_semiPOT}
A semi-perfect obstruction theory of $X$ consists of an \'etale covering $\{X_{\alpha}\}_{\alpha\in\Lambda}$ of 
$X$ by affine schemes, and truncated perfect obstructin theories
$$\phi_{\alpha}: E_{\alpha}\to L_{X_\alpha}, \alpha\in\Lambda$$
such that 
\begin{enumerate}
\item for any pair $\alpha, \beta\in \Lambda$ there exists an isomorphism 
\begin{equation}\label{isomorphism_psi_alphabeta}
\psi_{\alpha\beta}: h^1(E_{\alpha}^\vee)|_{X_{\alpha\beta}}\stackrel{\cong}{\longrightarrow}
h^1(E_{\beta}^\vee)|_{X_{\alpha\beta}}
\end{equation}
such that the collections $(h^1(E_{\alpha}^\vee), \psi_{\alpha\beta})$ forms a descent data of sheaves.
\item for any pair $\alpha, \beta\in \Lambda$, the obstruction theories 
$\phi_{\alpha}|_{X_{\alpha\beta}}$ and $\phi_{\beta}|_{X_{\alpha\beta}}$ are $\nu$-equivalent via $\psi_{\alpha\beta}$. 
\end{enumerate}
Of course, a perfect obstruction theory is a semi-perfect obstruction theory. 
\end{defn}

Let $\phi=\{\phi_\alpha, X_\alpha, E_\alpha, \psi_{\alpha\beta}\}_{\alpha\in\Lambda}$ be a semi-perfect obstruction theory for $X$.  We denote by $\Ob_{\phi}$ the resulting obstruction sheaf of the semi-perfect obstruction theory by the gluing of $\Ob_{\phi}$ for $\alpha\in \Lambda$. 

Let $N_{X}=h^1/h^0(L_X^\vee)$ be the intrinsic normal sheaf of $X$, and we can think of this sheaf as the gluing of 
$N_{X}|_{X_\alpha}=N_{X_\alpha}=h^1/h^0(L_{X_\alpha}^\vee)$ for $\alpha\in \Lambda$. Then there exists a group homomorphism
$$\eta_{\star}: Z_*(N_X)\to Z_*(\Ob_\phi)$$
by patching the collection:
$$\eta_{\phi_{\alpha} \star}: Z_*(N_{X_\alpha})\to Z_*(\Ob_{\phi_\alpha}).$$
\cite[Lemma 3.3]{CL} proves that for any integral Artin stack $[A]\in Z_*(N_{X})$, the collection 
$$[A_\alpha]:=\eta_{\phi_{\alpha} \star}[A\times_{X}X_{\alpha}]\in Z_*(\Ob_{\phi}|_{X_\alpha})$$
satisfy the descent condition:
\begin{equation}
A_\alpha\times_{X_\alpha}X_{\alpha\beta}=A_\beta\times_{X_\beta}X_{\alpha\beta}\subset 
\Ob_{\phi}|_{X_{\alpha\beta}}
\end{equation}
so that it forms an integral cycle in $Z_*(\Ob_{\phi})$.  Let $\eta_{\phi \star}: Z_*(N_X)\to Z_*(\Ob_{\phi})$ be the homomorphism by linear extensions. 

Let $\mathbf{c}_{X}$ be the intrinsic normal cone of $X$ such that 
\'etale locally on $X_\alpha\to X$, there exists a closed immersion
$$X_\alpha\hookrightarrow M$$
into a smooth scheme $M$, $\mathbf{c}_{X}|_{X_\alpha}=[C_{X_\alpha/M}/T_M|_{X_\alpha}]$. 
Then $[\mathbf{c}_{X}]\in Z_*(N_X)$ is a cycle, and we define 
$$[cv_X]=\eta_{\phi\star}[\mathbf{c}_{X}]\in Z_*(\Ob_\phi).$$
Let $s: X\to \Ob_{\phi}$ be the zero section, then 
\cite[Proposition 3.4]{CL} constructed the Gysin map 
$$s^{!}:  Z_*(\Ob_\phi)\to Z_*(X)$$
such that 
$$[X, \phi]^{\virt}:=s^{!}([cv_{X}])\in A_*(X)$$
is the virtual fundamental cycle associated with the semi-perfect obstruction theory 
$\phi$. 

\subsubsection{Obstruction cone}

We generalize the obstruction cone to semi-perfect obstruction theory. 

Recall from \cite[\S 2.1]{Behrend}, a local resolution of $\phi=\{\phi_\alpha, X_\alpha, E_\alpha, \psi_{\alpha\beta}\}_{\alpha\in\Lambda}$ is a derived category homomorphism 
$$F\to E_{\alpha}^\vee[1]|_{U}$$
over some \'etale open chart $U$ of $X_\alpha$, where $F$ is a vector bundle over $U$ and the homomorphism $F\to E_{\alpha}^\vee[1]|_{U}$ satisfies the condition that its cone is a locally free sheaf over $U$ concentrated in degree $-1$.  Or as a local presentation  
$F\to h^1/h^0(E_{\alpha}^\vee)|_{U}$ of the bundle stack over $U$ of $X_\alpha$. 

For every local resolution $F\to h^1/h^0(E_{\alpha}^\vee)|_{U}$  there exists an associated cone $C\subset F$, the obstruction cone, defined via the Cartesian diagram:
\[
\xymatrix{
C\ar[r]\ar[d] &F\ar[d]\\
\mathbf{c}_{X}|_{U}\ar[r]& h^1/h^0(E_{X_\alpha}^\vee)|_{U}
}
\]
where $\mathbf{c}_{X}|_{U}\cong \mathbf{c}_{X_\alpha}|_{U}$.  The local resolution 
$F\to h^1/h^0(E_{\alpha}^\vee)|_{U}$ induces a canonical epimorphism 
$$F\to \Ob_{\phi_{\alpha}}|_{U}=\Ob_{\phi}|_{U}$$
of coherent sheaves. 

\begin{prop}(\cite[Proposition 2.2]{Behrend})\label{prop_obstruction_cone}
Let $\Omega$ be a vector bundle over $X$, and $\Omega\to \Ob_{\phi}$ an epimorphism of coherent sheaves. Then there exists a unique closed subcone $C\subset \Omega$ such that for every local resolution 
$F\to E_{\alpha}^\vee[1]|_{U}$, with obstruction cone $C^\prime\subset F$, and every lift $\gamma$
\[
\xymatrix{
&F\ar[d]\ar[dl]^{\gamma}\\
\Omega|_{U}\ar[r]& \Ob_{\phi}|_{U}
}
\]
we have $C|_{U}=\gamma^{-1}(C^\prime)$, in the scheme-theoretic sense. 
\end{prop}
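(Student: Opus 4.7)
The strategy is to \emph{define} the subcone $C$ by the prescribed local formula and then verify that it is well defined and descends. Concretely, for any local resolution $F \to h^1/h^0(E_\alpha^\vee)|_U$ with an epimorphism-compatible lift $\gamma: F \to \Omega|_U$, set $C_{U,F,\gamma} := \gamma^{-1}(C')$ scheme-theoretically, where $C' \subset F$ is the obstruction cone attached to this local resolution. The entire content of the proposition is that this locally defined subscheme is independent of the choice of $(F,\gamma)$ and of the chart $X_\alpha$, so it glues to a global closed subcone of $\Omega$. Uniqueness is automatic, since local resolutions cover $X$.

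The first step is independence of the lift $\gamma$ for a \emph{fixed} local resolution $F \to h^1/h^0(E_\alpha^\vee)|_U$. Two lifts $\gamma,\gamma': F \to \Omega|_U$ of the given epimorphism $\Omega \to \Ob_\phi$ differ by a map $F \to \ker(\Omega \to \Ob_\phi)$. I would check, using the definition $C' = F \times_{h^1/h^0(E_\alpha^\vee)|_U} \mathbf{c}_X|_U$, that $C'$ is invariant under translation by the kernel direction, so $\gamma^{-1}(C') = (\gamma')^{-1}(C')$.

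The main step is independence of the resolution itself. Given two local resolutions $F_1, F_2 \to h^1/h^0(E_\alpha^\vee)|_U$ with lifts $\gamma_i : F_i \to \Omega|_U$, form the fibered product $F_{12} := F_1 \times_{h^1/h^0(E_\alpha^\vee)|_U} F_2$; this is itself a local resolution, and the two projections $p_i : F_{12} \to F_i$ are smooth surjections. The obstruction cone $C_{12} \subset F_{12}$ associated to the composed resolution satisfies $C_{12} = p_i^{-1}(C_i')$, since $C_i' = F_i \times_{h^1/h^0(E_\alpha^\vee)|_U} \mathbf{c}_X|_U$ and the fiber product of preimages commutes with further fiber products. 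Hence $p_1^{-1}\gamma_1^{-1}(C_1') = \gamma_{12}^{-1}(C_{12}) = p_2^{-1}\gamma_2^{-1}(C_2')$, where $\gamma_{12}$ is any lift on $F_{12}$, which exists by step one. Since $p_1$ is faithfully flat (indeed smooth surjective), we conclude $\gamma_1^{-1}(C_1') = \gamma_2^{-1}(C_2')$ as closed subschemes of $\Omega|_U$. This is the step I expect to be the main obstacle, because one must be careful that the scheme-theoretic equalities (not merely set-theoretic) are preserved, and this is where smoothness of $F_i \to h^1/h^0(E_\alpha^\vee)|_U$ is essential.

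Finally, to patch the local subcones on an overlap $X_{\alpha\beta}$, one uses that the isomorphism $\psi_{\alpha\beta}: h^1(E_\alpha^\vee)|_{X_{\alpha\beta}} \to h^1(E_\beta^\vee)|_{X_{\alpha\beta}}$ intertwines the maps $\eta_{\phi_\alpha}$ and $\eta_{\phi_\beta}$ on the intrinsic normal cone, which is precisely the descent statement from \cite[Lemma 3.3]{CL}, together with the global nature of the epimorphism $\Omega \to \Ob_\phi$. Thus the subcones $\gamma^{-1}(C')$ constructed in charts $X_\alpha$ and $X_\beta$ agree on $X_{\alpha\beta}$, giving a well defined global closed subcone $C \subset \Omega$ with the required local property. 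Uniqueness follows because any other subcone $\tilde C \subset \Omega$ with the stated property must coincide with $C$ on every local resolution, and such resolutions cover $X$.
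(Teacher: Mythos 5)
Your overall strategy---define $C$ chart by chart by the prescribed formula and then verify independence of the lift, of the local resolution, and of the chart---is sound, and each verification goes through essentially as you sketch it; but it is organized quite differently from the paper's proof. The paper avoids all the case-by-case independence checks by making a single global construction: the coarse moduli sheaf $cv$ of the intrinsic normal cone is a subsheaf of $\Ob_{\phi}$ (this is where the descent data $\psi_{\alpha\beta}$ and the $\nu$-equivalence are used, once and for all), the square $\mathbf{c}_X=h^1/h^0\times_{\Ob_\phi}cv$ is Cartesian because $\mathbf{c}_X$ is invariant under the $h^0$-action, and $C$ is then \emph{defined} as the fiber product $\Omega\times_{\Ob_\phi}cv$. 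The local formula follows by pasting Cartesian squares, since the local obstruction cone is likewise $C'=F\times_{\Ob_\phi|_U}cv|_U$. Your step one (invariance of $C'$ under translation by the kernel direction) is precisely the shadow of the fact that $C'$ is pulled back from $cv\subset\Ob_\phi$; once that is made explicit, the fiber-product-of-resolutions argument in your main step becomes unnecessary. What your route buys is a more elementary, hands-on argument; what the paper's buys is that existence, uniqueness, and compatibility all drop out of one diagram.

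Two points need tightening. First, the orientation of $\gamma$: as you state it (following the diagram in the proposition), $\gamma\colon F\to\Omega|_U$, in which case $\gamma^{-1}(C')$ with $C'\subset F$ does not typecheck; the intended lift (as in Behrend) is $\gamma\colon\Omega|_U\to F$ of $\Omega|_U\to\Ob_\phi|_U$ through $F\to\Ob_\phi|_U$. With that orientation, two lifts differ by a map $\Omega|_U\to\ker(F\to\Ob_\phi|_U)$---not into $\ker(\Omega\to\Ob_\phi)$---and the expressions such as $p_1^{-1}\gamma_1^{-1}(C_1')$ should be rewritten as $\gamma_{12}^{-1}p_1^{-1}(C_1')$. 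Second, in the comparison of two resolutions, the existence of a common lift $\gamma_{12}\colon\Omega|_U\to F_{12}$ comes from surjectivity of $F_{12}\to\Ob_\phi|_U$ over an affine $U$ (your step one only gives uniqueness up to the kernel), and once $\gamma_{12}$ exists no faithful flatness is needed: the chain $\gamma_1^{-1}(C_1')=\gamma_{12}^{-1}p_1^{-1}(C_1')=\gamma_{12}^{-1}(C_{12})=\gamma_2^{-1}(C_2')$, combined with step one on each $F_i$ to replace an arbitrary $\gamma_i$ by $p_i\circ\gamma_{12}$, already gives the scheme-theoretic equality.
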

\begin{proof}
Still since for $\alpha\in \Lambda$,  $\phi_{\alpha}: E_\alpha\to L_{X_\alpha}$ is a perfect obstruction theory. 
\'Etale locally around $X_\alpha$ (hence around $X$), the presentation $F$ and $\gamma$ always exists. Hence the uniqueness of $C$ is true.   We only need to prove its existence. 

Let $X_{\alpha\beta}\to X_\alpha$ be the embedding of \'etale chart. Then from \cite[\S 3]{BF}, the intrinsic normal cone 
$$\mathbf{c}_{X_{\alpha\beta}}=\mathbf{c}_{X}|_{X_{\alpha\beta}}\hookrightarrow \mathbf{c}_{X}|_{X_{\alpha}}=\mathbf{c}_{X_{\alpha}}$$
is a closed subcone stack via the embedding of intrinsic normal sheaves $N_{X_{\alpha\beta}}\to N_{X_\alpha}$. 
On the other hand, the vector bundle stack 
$$h^1/h^0(E_\alpha^\vee)|_{X_{\alpha\beta}}\to h^1/h^0(E_\alpha^\vee)$$
is an embedding as closed bundle stacks, and 
\[
\xymatrix{
\mathbf{c}_{X_\alpha}|_{X_{\alpha\beta}}\ar@{^{(}->}[r]\ar@{^{(}->}[d]& N_{X_\alpha}|_{X_{\alpha\beta}}\ar@{^{(}->}[r]\ar@{^{(}->}[d]& h^1/h^0(E_\alpha^\vee)|_{X_{\alpha\beta}}\ar@{^{(}->}[d]\\
\mathbf{c}_{X_\alpha}\ar@{^{(}->}[r]& N_{X_\alpha}\ar@{^{(}->}[r]& h^1/h^0(E_\alpha^\vee)|_{X_{\alpha\beta}}
}
\]
as embedding diagrams. 

From the definition of semi-perfect obstruction theory, for any $\alpha, \beta$,  
$$\psi_{\alpha\beta}: h^1/h^0(E_\alpha^\vee)|_{X_{\alpha\beta}}\stackrel{\cong}{\longrightarrow}
h^1/h^0(E_\beta^\vee)|_{X_{\alpha\beta}},$$
hence the bundle stacks glue to give a stack $h^1/h^0$ on $X$ such that for each \'etale morphism 
$X_\alpha\to X$, $h^1/h^0|_{X_\alpha}\cong h^1/h^0(E_\alpha^\vee)$. 

Still let $cv$ be the coarse moduli sheaf of the intrinsic normal cone $\mathbf{c}_X$, and we get a Cartesian diagram on $X$:
\[
\xymatrix{
\mathbf{c}_X\ar@{^{(}->}[r]\ar[d]& h^1/h^0\ar[d]\\
cv\ar@{^{(}->}[r]&\Ob_{\phi}
}
\]
as stacks (in the big \'etale site of $X$).  Then the cone $C\hookrightarrow \Omega$ is constructed by the fibre product of sheaves on the big \'etale site of $X$:
\begin{equation}\label{eqn_cone_construction}
\xymatrix{
C\ar@{^{(}->}[r]\ar[d]& \Omega\ar[d]\\
cv\ar@{^{(}->}[r]&\Ob_{\phi}
}
\end{equation}
which is Cartesian. So any $\Omega\to F$ gives a diagram 
\[
\xymatrix{
C\ar@{^{(}->}[r]\ar[d]& \Omega\ar[d]\\
C^\prime\ar@{^{(}->}[r]&F
}
\]
which is Cartesian because (\ref{eqn_cone_construction}) and the Cartesian diagram:
\[
\xymatrix{
C^\prime\ar@{^{(}->}[r]\ar[d]& F\ar[d]\\
cv\ar@{^{(}->}[r]&\Ob_{\phi}
}
\]
by assuming $F$ is a global resolution. 
\end{proof}

\begin{defn}\label{defn_symmetric_semiPOT}
A semi-perfect obstruction theory $\phi=\{\phi_\alpha, X_\alpha, E_\alpha, \psi_{\alpha\beta}\}_{\alpha\in\Lambda}$ for $X$ is $symmetric$ if for any 
$\alpha\in\Lambda$, $E_\alpha$ is endowed with a non-degenerate symmetric bilinear form 
$\theta_{\alpha}: E_\alpha\stackrel{\cong}{\longrightarrow} E_{\alpha}^\vee[1]$.
\end{defn}

\begin{prop}
The obstruction sheaf $\Ob_\phi$ for a symmetric semi-perfect obstruction theory 
$\phi$ is the cotangent sheaf $\Omega_X$. 
\end{prop}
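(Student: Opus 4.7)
The plan is to construct, \'etale-locally on each chart $X_\alpha$, a canonical isomorphism $\iota_\alpha: \Omega_{X_\alpha} \xrightarrow{\cong} \Ob_{\phi_\alpha}$ out of $\theta_\alpha$ and $\phi_\alpha$, and then to verify that the family $\{\iota_\alpha\}$ is compatible with the descent data $\psi_{\alpha\beta}$ from Definition~\ref{defn_semiPOT}, so that it glues to a global isomorphism $\Omega_X \xrightarrow{\cong} \Ob_\phi$.

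For the local part, fix $\alpha\in\Lambda$ and apply $h^0$ to the symmetric pairing $\theta_\alpha: E_\alpha \xrightarrow{\cong} E_\alpha^\vee[1]$. Since $h^0(E_\alpha^\vee[1]) = h^1(E_\alpha^\vee) = \Ob_{\phi_\alpha}$, this yields an isomorphism $h^0(\theta_\alpha): h^0(E_\alpha) \xrightarrow{\cong} \Ob_{\phi_\alpha}$. On the other hand, condition (2) of Definition~\ref{defn:semi_POT} says $\phi_\alpha$ induces an isomorphism on $h^0$, so $h^0(\phi_\alpha): h^0(E_\alpha) \xrightarrow{\cong} h^0(L_{X_\alpha}) = \Omega_{X_\alpha}$. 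I set $\iota_\alpha := h^0(\theta_\alpha)\circ h^0(\phi_\alpha)^{-1}$.

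For the gluing, I would verify that on each overlap $X_{\alpha\beta}$ the diagram
\[
\xymatrix{
\Omega_{X_\alpha}|_{X_{\alpha\beta}} \ar@{=}[d] \ar[r]^{\iota_\alpha} & \Ob_{\phi_\alpha}|_{X_{\alpha\beta}} \ar[d]^{\psi_{\alpha\beta}} \\
\Omega_{X_\beta}|_{X_{\alpha\beta}} \ar[r]^{\iota_\beta} & \Ob_{\phi_\beta}|_{X_{\alpha\beta}}
}
\]
commutes. By the $\nu$-equivalence condition (Definition~\ref{defn_nu_equivalence}), $\psi_{\alpha\beta}|_p$ intertwines $h^1(\phi_\alpha^\vee)|_p$ and $h^1(\phi_\beta^\vee)|_p$ on every obstruction class $\omega(g,T,\overline{T}) \in h^1(L_X^\vee)|_p\otimes I$ coming from a small extension at $p$. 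Because such extensions exhaust $h^1(L_X^\vee)|_p\otimes I$, one promotes this to the sheaf-level identity $\psi_{\alpha\beta}\circ h^1(\phi_\alpha^\vee)|_{X_{\alpha\beta}} = h^1(\phi_\beta^\vee)|_{X_{\alpha\beta}}$. Using $h^0(\theta_\alpha), h^0(\theta_\beta)$ to identify target $h^1(E^\vee)$'s with $h^0(E)$'s, and then $h^0(\phi_\alpha) = h^0(\phi_\beta)$ on $X_{\alpha\beta}$ (both equal to the natural projection onto $\Omega_X|_{X_{\alpha\beta}}$), this identity translates into $\psi_{\alpha\beta}\circ \iota_\alpha = \iota_\beta$, so the $\iota_\alpha$ descend to the claimed global isomorphism $\Omega_X \cong \Ob_\phi$.

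The main obstacle is the final dualisation step: the definition of symmetric semi-perfect obstruction theory imposes no explicit compatibility among the local pairings $\theta_\alpha,\theta_\beta$ and the gluing $\psi_{\alpha\beta}$, so the compatibility must be deduced from $\nu$-equivalence alone. The key input is the rigidity provided jointly by non-degeneracy of $\theta_\alpha$ and by the fact that $h^1(\phi_\alpha^\vee)$ hits all of $\Ob_{\phi_\alpha}$ via obstruction classes; together these pin $\psi_{\alpha\beta}$ down tightly enough to force $\psi_{\alpha\beta}\circ \iota_\alpha = \iota_\beta$.
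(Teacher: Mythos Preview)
Your local construction of $\iota_\alpha = h^0(\theta_\alpha)\circ h^0(\phi_\alpha)^{-1}$ is exactly what the paper does, only compressed into one line: from the symmetry $\theta_\alpha$ one reads off $h^1(E_\alpha^\vee)\cong h^0(E_\alpha)$, and the obstruction-theory axiom gives $h^0(E_\alpha)\cong h^0(L_{X_\alpha})=\Omega_{X_\alpha}$, whence $\Ob_{\phi_\alpha}=\Omega_{X_\alpha}$. The paper then simply asserts that these glue to $\Omega_X$ and stops; it does \emph{not} verify compatibility with $\psi_{\alpha\beta}$.

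You go further and try to establish the gluing, but your argument for it has a genuine gap. The $\nu$-equivalence condition controls the map
\[
h^1(\phi_\alpha^\vee)\colon\ h^1(L_X^\vee)\ \longrightarrow\ h^1(E_\alpha^\vee)=\Ob_{\phi_\alpha},
\]
whose \emph{source} is the intrinsic obstruction space $T^1_X=h^1(L_X^\vee)$, not $\Omega_X=h^0(L_X)$. Your isomorphism $\iota_\alpha$, by contrast, is built from $h^0(\phi_\alpha)$ and $h^0(\theta_\alpha)$. The pairing $\theta_\alpha$ identifies $h^1(E_\alpha^\vee)$ with $h^0(E_\alpha)$ on the \emph{target} side, but there is no analogous self-duality on $L_X$ relating $h^1(L_X^\vee)$ to $h^0(L_X)$ on the \emph{source} side. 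So the step ``this identity translates into $\psi_{\alpha\beta}\circ\iota_\alpha=\iota_\beta$'' does not go through: after applying the $\theta$'s you still have an equation of maps out of $h^1(L_X^\vee)$, not out of $\Omega_X$.

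There is a second problem in your final paragraph. You assert that ``$h^1(\phi_\alpha^\vee)$ hits all of $\Ob_{\phi_\alpha}$ via obstruction classes.'' In fact the obstruction-theory axioms (isomorphism on $h^0$, epimorphism on $h^{-1}$) force the cone of $\phi_\alpha$ to live in degrees $\le -2$; dualising, the cone of $\phi_\alpha^\vee$ lives in degrees $\ge 2$, so the long exact sequence shows that $h^1(\phi_\alpha^\vee)$ is \emph{injective}, not surjective. Thus even the $\nu$-equivalence identity $\psi_{\alpha\beta}\circ h^1(\phi_\alpha^\vee)=h^1(\phi_\beta^\vee)$ only pins down $\psi_{\alpha\beta}$ on the image of $h^1(L_X^\vee)$, which is typically a proper subsheaf of $\Ob_{\phi_\alpha}$. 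Since Definition~\ref{defn_symmetric_semiPOT} imposes no compatibility between the $\theta_\alpha$'s and $\psi_{\alpha\beta}$, the desired commutativity cannot be extracted from $\nu$-equivalence alone; some additional hypothesis (or a convention that $\psi_{\alpha\beta}$ is the one induced by the $\theta$'s) is implicitly in play in the paper.
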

\begin{proof}
Locally the obstruction sheaf 
$\Ob_{\phi_\alpha}=h^1(E_\alpha^\vee)=\Omega_{X_\alpha}$ is the cotangent sheaf of $X_\alpha$, therefore $\Omega_{X_\alpha}$ glue to give the cotangent sheaf $\Omega_{X}$. 
\end{proof}

\subsubsection{Almost closed one form}

Recall from \S 3.4 of \cite{Behrend}, any symmetric obstruction theory is locally given by an almost closed $1$-form.  This is still true for symmetric semi-perfect obstruction theory.  
Let $\phi=\{\phi_\alpha, X_\alpha, E_\alpha, \psi_{\alpha\beta}\}_{\alpha\in\Lambda}$ be a  symmetric semi-perfect obstruction theory for $X$.   The symmetric obstruction theory $E_\alpha$ on any \'etale local chart 
$X_\alpha\to X$ gives rise to the following:
for each point $p\in X_\alpha$ in an \'etale neighborhood, there exists an immersion 
$$X_\alpha\hookrightarrow M$$
into a smooth scheme $M$, such that there exists an almost closed one form 
$\omega\in \Omega_M^1$ and an isometry $E_\alpha\to H(\omega)$
such that 
\[
\xymatrix{
E_\alpha\ar[rr]\ar[dr] &&H(\omega)\ar[dl]\\
&L_{X_\alpha}&
}
\]
where $H(\omega)=\Big[T_M|_{X_\alpha}\stackrel{\triangledown\omega}{\longrightarrow}\Omega_{M}|_{X_\alpha}\Big]$, $\triangledown\omega=d\cdot \omega^\vee$ coming from the following diagram:
\[
\xymatrix{
T_M|_{X_\alpha}\ar[r]^{\triangledown\omega}\ar[d]^{\omega^\vee}& \Omega_{M}|_{X_\alpha}\ar[d]\\
I_{X_\alpha}/I_{X_\alpha}^2\ar[r]^{d}& \Omega_{M}|_{X_\alpha}.
}
\]
The almost closed one form $\omega\in \Omega^1_{M}$ means that 
$d\omega\in I_{X_\alpha}\cdot \Omega_M^2$, where $I_{X_\alpha}$ is the ideal sheaf of the zero locus of 
$\omega$ ($X_\alpha$ is the zero locus of $\omega$). 

\subsubsection{Virtual fundamental cycle}

Let $\phi=\{\phi_\alpha, X_\alpha, E_\alpha, \psi_{\alpha\beta}\}_{\alpha\in\Lambda}$ be a  symmetric semi-perfect obstruction theory for $X$.  Recall that we have
the morphism 
$$\eta_{\phi \star}: Z_*(N_X)\to Z_*(\Omega_X)$$
constructed before. Let $\mathbf{c}_X$ be the intrinsic normal cone and 
$cv=\eta_{\phi\star}[\mathbf{c}_X]$ is exactly the coarse moduli sheaf of the intrinsic normal cone taken as a cycle in $Z_*(\Omega_{X})$. 

Let 
$$[X,\phi]^{\virt}=s^{!}_{\Omega_X}([cv])$$
be the virtual fundamental cycle by applying the Gysin map 
$$s^{!}_{\Omega_X}: Z_*(\Omega_X)\to A_*(X)$$ 
given by 
$$[cv]\mapsto [X,\phi]^{\virt}$$
as in \cite[Proposition 3.4]{CL}. 

We give an alternative construction due to \cite{Behrend}.  Note that since $\mathbf{c}_X$ and $h^1/h^0(E_\alpha^\vee)$ are all Artin stacks, one can use the intersection theory of Artin stacks in \cite{Kretch} to directly applying the Gysin map on the Chow group 
$A_*(h^1/h^0)$ on $X$. Let 
\[
\xymatrix{
C\ar@{^{(}->}[r]\ar[d]& \Omega\ar[d]\\
cv\ar@{^{(}->}[r]&\Ob_{\phi}=\Omega_X
}
\]
be the Cartesian diagram in (\ref{eqn_cone_construction}), where $\Omega$ is a vector bundle and $C$ is the obstruction cone in $\Omega$. 
\begin{prop}\label{prop_virt_obstruction_cone}
$$[X,\phi]^{\virt}=s_{\Omega}^{!}[C]\in A_*(X)$$
where $s^{!}_{\Omega}$ is the Gysin map of the vector bundle $\Omega \to X$. 
\end{prop}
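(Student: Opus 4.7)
The plan is to reduce the claim to an \'etale-local computation, where the classical Behrend--Fantechi formula for the virtual class of a perfect obstruction theory applies, and then to relate the two Gysin pullbacks via a vector-bundle functoriality argument.

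First, both sides of the asserted identity are compatible with \'etale restriction. By the construction in \cite[Proposition 3.4]{CL}, the class $[X,\phi]^{\virt}$ is characterised by the property that its restriction to each chart $X_\alpha$ is the Behrend--Fantechi virtual class of the local perfect obstruction theory $\phi_\alpha$; on the other hand, the cone $C \subset \Omega$ produced by Proposition \ref{prop_obstruction_cone} is built from \'etale-local Cartesian squares, so $s_\Omega^![C]$ restricts compatibly. Hence it suffices to verify the identity on each $X_\alpha$ separately.

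Second, fix a chart $X_\alpha$ and choose, after an auxiliary \'etale refinement $U \to X_\alpha$, a two-term locally free resolution of $E_\alpha$. This produces a vector bundle $F$ on $U$ together with a local resolution $F \to E_\alpha^\vee[1]|_U$, an epimorphism $F \twoheadrightarrow \Ob_\phi|_U = \Omega_X|_U$, and an obstruction cone $C_F \subset F$ as in Proposition \ref{prop_obstruction_cone}. The classical Behrend--Fantechi formula \cite{BF} then reads
$$[X_\alpha,\phi_\alpha]^{\virt}\big|_U \;=\; s_F^![C_F].$$
Because $F|_U$ is locally free, after further shrinking $U$ the surjection $\Omega|_U \twoheadrightarrow \Omega_X|_U$ lifts through $F \twoheadrightarrow \Omega_X|_U$ to a morphism of vector bundles $\gamma \colon \Omega|_U \to F$; by the uniqueness clause of Proposition \ref{prop_obstruction_cone} we then have $C|_U = \gamma^{-1}(C_F)$ scheme-theoretically.

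Third, and this is the main technical step, I would establish the Gysin identity
$$s_{\Omega|_U}^!\bigl[\gamma^{-1}(C_F)\bigr] \;=\; s_F^![C_F]$$
for an \emph{arbitrary} morphism $\gamma$ of vector bundles, not only for surjections. The device is to factor $\gamma$ through its graph: with $g = (\id,\gamma) \colon \Omega|_U \hookrightarrow \Omega|_U \oplus F$ and $\pi_F \colon \Omega|_U \oplus F \to F$ the second projection, one has $\gamma = \pi_F \circ g$ and $\gamma^{-1}(C_F) = g^{-1}(\pi_F^{-1}(C_F))$ as subschemes. Since $g$ is a section of the vector-bundle projection $\pi_1 \colon \Omega|_U \oplus F \to \Omega|_U$, it is a regular embedding satisfying $g \circ s_{\Omega|_U} = s_{\Omega|_U \oplus F}$, so functoriality of Gysin pullbacks gives $s_{\Omega|_U}^! g^! = s_{\Omega|_U \oplus F}^!$; similarly $\pi_F \circ s_{\Omega|_U \oplus F} = s_F$ together with the compatibility of Gysin pullback with flat pullback gives $s_{\Omega|_U \oplus F}^! \pi_F^* = s_F^!$. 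Combining these with $[\gamma^{-1}(C_F)] = g^! \pi_F^*[C_F]$ (flat base change along $\pi_F$ followed by the graph Gysin) delivers the identity.

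Assembling the three local equalities yields $s_\Omega^![C]\big|_U = [X_\alpha,\phi_\alpha]^{\virt}\big|_U$ on every refinement $U$, and \'etale descent via the $\nu$-equivalence data built into the semi-perfect obstruction theory (cf.\ Lemma 3.3 of \cite{CL}) glues these to the global identity $s_\Omega^![C] = [X,\phi]^{\virt}$. The main obstacle I anticipate is the third step: the lift $\gamma$ is not a vector-bundle surjection in general, so one cannot directly invoke the Gysin compatibility for a short exact sequence of bundles; the graph--projection factorisation described above circumvents this by moving the computation into $\Omega|_U \oplus F$, where only a regular embedding and a flat pullback intervene.
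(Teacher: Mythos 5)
Your overall skeleton (restrict to charts, compare the global bundle $\Omega$ with a local resolution $F$ via a lift $\gamma$, and compare the two Gysin pullbacks) is the right one, and the paper itself omits the argument entirely, so the burden is on the details. The problem is that your ``main technical step'' is false as stated. The identity $s_{\Omega|_U}^!\bigl[\gamma^{-1}(C_F)\bigr]=s_F^![C_F]$ cannot hold for an \emph{arbitrary} morphism of vector bundles: already the dimension count fails, since $s_F^![C_F]$ lives in degree $\dim C_F-\rk F$ while $s_{\Omega|_U}^![\gamma^{-1}(C_F)]$ lives in degree $\dim\gamma^{-1}(C_F)-\rk\Omega$, and $\dim\gamma^{-1}(C_F)$ need not equal $\dim C_F+\rk\Omega-\rk F$ when $\gamma$ is not flat over $C_F$ (e.g.\ a lift $\gamma$ may drop rank along a divisor, producing excess components in $\gamma^{-1}(C_F)$). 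Your graph factorisation correctly yields $s_F^![C_F]=s_{\Omega|_U}^!\,g^!\,\pi_F^*[C_F]$, but the remaining step $g^!\pi_F^*[C_F]=[\gamma^{-1}(C_F)]$ is precisely an excess--intersection statement: the refined Gysin class of a regular embedding applied to $[\pi_F^{-1}(C_F)]$ is a class supported on the preimage, not its fundamental cycle, unless the intersection is dimensionally proper and multiplicity free. Since the lift $\gamma$ of $\Omega|_U\twoheadrightarrow\Ob_\phi|_U$ through $F\twoheadrightarrow\Ob_\phi|_U$ is in general neither surjective nor transverse to $C_F$, this step is a genuine gap, not a technicality.

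The standard repair uses the defining feature of a local resolution, namely that $K:=\ker\bigl(F\to\Ob_\phi|_U\bigr)$ is locally free. Replace $\gamma$ by
$$\delta:\Omega|_U\oplus K\longrightarrow F,\qquad \delta(\omega,k)=\gamma(\omega)+k,$$
which \emph{is} a surjective bundle map because $\gamma(\Omega|_U)+K=F$ (the composite to $\Ob_\phi|_U=F/K$ is onto). One checks $\delta^{-1}(C_F)=q^{-1}(C|_U)$, where $q:\Omega|_U\oplus K\to\Omega|_U$ is the projection, since membership in $C_F=F\times_{\Ob_\phi}cv$ only depends on the image in $\Ob_\phi$ and $K$ maps to zero there. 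Both $\delta$ and $q$ are smooth surjections of bundles, so flat pullback of cycles agrees with scheme-theoretic preimage and commutes with the zero-section Gysin maps, giving $s_\Omega^![C|_U]=s_{\Omega\oplus K}^![q^{-1}(C|_U)]=s_{\Omega\oplus K}^![\delta^{-1}(C_F)]=s_F^![C_F]$ with no transversality hypothesis. With this substitution for your third step, and with the caveat that the gluing to a global identity in $A_*(X)$ must be run at the level of cycles in $Z_*(\Omega)$ and $Z_*(\Ob_\phi)$ (Chow classes do not satisfy \'etale descent; this is what \cite[Proposition 3.4]{CL} actually organises), your argument goes through.
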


\section{Behrend's theorem}\label{sec_Behrend_theorem}

In this section we survey the Behrend's theorem equating the virtual count of a symmetric semi-perfect obstruction theory $\phi$ to the weighted Euler characteristic of $X$.

\subsection{The Behrend function}

Let $X$ be a Deligne-Mumford stack. In \cite[\S 2]{Behrend} Behrend introduced on $X$ an integer valued constructible function 
$$\nu_X: X\to \zz,$$
which is called the ``Behrend function".  We briefly recall its construction.  More detail construction can be found in \cite{Behrend}, and \cite{Jiang2}. 

There exists a unique integral cycle $\mathfrak{c}_X$ on $X$ such that for any \'etale chart $U\to X$ and $U\to M$ an embedding into a smooth scheme $M$, 
$$\mathfrak{c}_X|_{U}=\mathfrak{c}_{U/M}$$
and
$$\mathfrak{c}_{U/M}=\sum_{C^\prime}(-1)^{\dim \pi(C^\prime)}\mbox{mult}(C^\prime)[\pi(C^\prime)]$$
where 
$$\pi: C_{U/M}\to U$$
 is the projection from the normal cone $C_{U/M}$ to $U$;  $C^\prime$ are all the irreducible components of the normal cone $C$; $\pi(C^\prime)$ are the irreducible closed subset (prime cycle) on $U$ by the image of $\pi$;  and 
$\mbox{mult}(C^\prime)$ is the multiplicity of $C^\prime$ at the generic point.

\begin{defn}\label{defn_Behrend_function}
The Behrend function $\nu_X: X\to\zz$ is defined as
$$\nu_X:=\mbox{Eu}(\mathfrak{c}_X)$$
where $\mbox{Eu}(-)$ is the local Euler obstruction of MacPherson  \cite{MacPherson} on integral cycles on $X$.\end{defn}

\begin{rmk}
\begin{enumerate}
\item
If $X$ is smooth, $\mathfrak{c}_X=(-1)^{\dim X}[X]$. In general it is a integral cycle in $Z_*(X)$. 
\item More motivation of the local Euler obstruction can be found in \cite{MacPherson}, see also \cite[\S 2]{Behrend}, \cite{Jiang2}.
\item If $X=\Crit(f)$ is the critical locus of a holomorphic function $f: M\to\cc$, then 
$$\nu_X(P)=(-1)^{\dim M}(1-\chi(\ff_P))$$
where $\ff_P$ is the Milnor fiber of the function $f$ at $P$. 
\end{enumerate}
\end{rmk}

\begin{defn}\label{defn_weighted_Euler}
The weighted Euler characteristic of $X$ by the Behrend function $\nu_X$ is defined as:
$$\chi(X,\nu_X)=\chi(X, \mbox{Eu}(\mathfrak{c}_X))=\sum_i i\cdot \chi(\nu_X^{-1}(i)).$$
\end{defn}

We recall the Aluffi class. First the Chern-Mather class is a group homomorphism
$$c^M: Z_*(X)\to A_*(X)$$
by linear extension for any prime cycle $V$ of degree $p$ on $X$, 
\begin{equation}\label{eqn_Chern_Mather}
c^M(V)=\mu_{\star}(c(T_{V})\cap [\widetilde V])
\end{equation}
where $\mu: \widetilde V\to V$ is the Nash blow-up, $T_{V}$ is the Nash tangent bundle on $\widetilde V$. 
Let $c^M_0(V)$ be the degree zero part of $c^M(V)$.  Behrend \cite[Definition 1.1]{Behrend} defines the Aluffi class as:
$$\alpha_X:=c^M(\mathfrak{c}_X)\in A_*(X).$$
If $X$ is smooth, then $\alpha_X=(-1)^{\dim X}c(T_X)\cap [X]=c(\Omega_X)\cap [X]$. 

\begin{thm}(\cite[Proposition 1.12]{Behrend}, \cite[Theorem 1.1]{Jiang3})\label{thm_MacPherson_index}
Let $X$ be a proper Deligne-Mumford stack. Then 
$$\int_{X}\alpha_X=\chi(X, \nu_X).$$
\end{thm}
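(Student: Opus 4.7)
The plan is to obtain the identity as a formal consequence of MacPherson's natural transformation, combined with a properness-integration argument adapted to Deligne--Mumford stacks.

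First, I would recall MacPherson's covariant transformation $c_\star \colon F(X) \to A_\star(X)$ from constructible functions to the Chow group (with rational coefficients in the stack setting). It is characterized by the normalization $c_\star(\mathrm{Eu}_V) = c^M(V)$ on integral subschemes $V \subset X$, where $c^M(V)$ is the Chern--Mather class defined in \eqref{eqn_Chern_Mather}. Extending linearly along the decomposition $\mathfrak{c}_X = \sum_{C'} (-1)^{\dim \pi(C')}\mathrm{mult}(C')[\pi(C')]$ yields
$$
c_\star(\nu_X) \;=\; c_\star(\mathrm{Eu}(\mathfrak{c}_X)) \;=\; c^M(\mathfrak{c}_X) \;=\; \alpha_X,
$$
which is the core identity relating the Behrend function to the Aluffi class.

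Next, I would invoke the covariance of $c_\star$ under the structural morphism $p \colon X \to \spec(\kappa)$, valid because $X$ is proper. For any constructible function $\phi$ on $X$, one has $p_\star c_\star(\phi) = c_\star(p_\star \phi)$, and $p_\star \phi = \chi(X,\phi)$ by definition of the pushforward of constructible functions in terms of Euler characteristics of fibers. Since $c_\star$ acts as the identity on a point, this gives
$$
\int_X c_\star(\phi) \;=\; \chi(X,\phi).
$$
Applying this with $\phi = \nu_X$ and combining with the previous step yields $\int_X \alpha_X = \int_X c_\star(\nu_X) = \chi(X,\nu_X)$.

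The main obstacle is establishing MacPherson's transformation with the required properties in the Deligne--Mumford setting, including the covariance under proper pushforward. For schemes over an algebraically closed field of characteristic zero this is due to MacPherson (and Kennedy in the algebraic setting); for proper Deligne--Mumford stacks one needs to construct $c_\star$ compatibly with \'etale covers and verify that the pushforward $p_\star$ on constructible functions computes the stacky (orbifold-weighted) Euler characteristic in a way consistent with the rational Chow-theoretic degree map. This extension is precisely the content of \cite[Theorem 1.1]{Jiang3}; once it is in hand, the computation above gives the result immediately, so the technical heart of the argument is the stacky MacPherson transformation while the combinatorial manipulation is formal.
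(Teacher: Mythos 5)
Your proposal is correct and is essentially the argument of the cited sources: the paper itself gives no proof of this theorem, deferring entirely to \cite[Proposition 1.12]{Behrend} (schemes) and \cite[Theorem 1.1]{Jiang3} (Deligne--Mumford stacks), and those proofs proceed exactly as you describe, via the normalization $c_\star(\mathrm{Eu}(\mathfrak{c}_X)) = c^M(\mathfrak{c}_X) = \alpha_X$ and covariance of the MacPherson transformation under pushforward to a point. You correctly identify that the only genuine technical content is the existence and proper covariance of the MacPherson transformation for DM stacks (with the degree map on rational Chow groups matching the weighted Euler characteristic), which is precisely what \cite{Jiang3} supplies.
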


\subsection{Behrend' theorem for symmetric semi-perfect obstruction theory}\label{subsec_Behrend_theorem}

\subsubsection{Lagrangian Cone} Let $M$ be a smooth scheme or a smooth Deligne-Mumford stack.  The cotangent bundle $\Omega_M$ has the tautological one form $\alpha\in \Omega_M$.  Any local \'etale coordinates 
$\{x_1, \cdots, x_n\}$ onn $M$ induces the canonical coordinate system 
$$\{x_1,\cdots, x_n, p_1, \cdots, p_n\}$$
on $\Omega_M$.  In such coordinate system $\alpha=\sum_i p_idx_i$. The differential $d\alpha=\theta$ defines the tautological symplectic form on $\Omega_M$.  

Recall that an irreducible closed subset $C\subset \Omega_M$ is conic and Lagrangian if and only if $\dim C=\dim M$ and $\alpha$ vanishes when restricted to the generic point of $C$. From \cite[Lemma 4.2]{Behrend}, if $V\subset M$ is an irreducible closed subset, the closure in $\Omega_M$ of the conormal bundle to any smooth dense open subset of $V$ is conic Lagrangian,  This way describes all conic Lagrangians. 

\begin{defn}\label{defn_conic_Lagrangian}
A closed subset of $\Omega_M$ is conic and Lagrangian if every one of its irreducible components is conic and Lagrangian.  An algebraic cycle in $\Omega_M$ is conic Lagrangian if its support is conic Lagrangian. 
\end{defn}

\subsubsection{Behrend's theorem}

Now since $X$ is a quasi-projective separated Deligne-Mumford stack, and let $X\hookrightarrow M$ be the closed immersion into a smooth projective Deligne-Mumford stack $M$ with projective coarse moduli space.  Moreover, $X$ admits a symmetric semi-perfect obstruction theory 
$\phi=\{\phi_\alpha, X_\alpha, E_\alpha, \psi_{\alpha\beta}\}_{\alpha\in\Lambda}$.  Let $C\subset \Omega_M$ be the obstruction cone constructed in Proposition \ref{prop_obstruction_cone} by 
\[
\xymatrix{
C\ar@{^{(}->}[r]\ar[d]& \Omega_M\ar[d]\\
cv\ar@{^{(}->}[r]& \Ob_{\phi}=\Omega_X
}
\]

\begin{thm}\label{thm_Lagrangian_cone}
The obstruction cone $C\subset \Omega_M$ is Lagrangian. 
\end{thm}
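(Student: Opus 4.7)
The plan is to reduce Theorem \ref{thm_Lagrangian_cone} to Behrend's local result for symmetric perfect obstruction theories (\cite[\S 4]{Behrend}) by using the almost closed one-form description from \S \ref{subsec_symmetric_SPOT} and étale descent. By Definition \ref{defn_conic_Lagrangian}, the property of being conic Lagrangian is checked component by component, and irreducible components are preserved under étale base change, so the property is étale-local on $\Omega_M$. Since $C$ is supported on $X$, it suffices, for each $\alpha \in \Lambda$, to pick (after refining) an étale neighborhood $U_\alpha \to M$ through which the closed immersion $X_\alpha \to X \hookrightarrow M$ factors, and to verify that the pullback $C_\alpha := C \times_M U_\alpha$ is conic Lagrangian inside $\Omega_{U_\alpha} = \Omega_M \times_M U_\alpha$.

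Next, I would use the almost closed one-form presentation: after further étale refinement, the symmetric perfect obstruction theory $\phi_\alpha$ is given by an almost closed one-form $\omega_\alpha \in \Omega^1_{U_\alpha}$ with $X_\alpha = Z(\omega_\alpha)$ and with $E_\alpha \cong H(\omega_\alpha)$. By the uniqueness assertion in Proposition \ref{prop_obstruction_cone}, the cone $C_\alpha \subset \Omega_{U_\alpha}|_{X_\alpha}$ is determined by the canonical epimorphism $\Omega_{U_\alpha}|_{X_\alpha} \twoheadrightarrow \Omega_{X_\alpha} = \Ob_{\phi_\alpha}$, so it is exactly Behrend's obstruction cone attached to $\omega_\alpha$. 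Concretely, $C_\alpha$ arises as the flat limit $\lim_{t \to 0} \Gamma_{t\omega_\alpha}$ of the rescaled graph of $\omega_\alpha$, or equivalently as the specialization of the graph of $\omega_\alpha$ to the normal cone along the zero section of $\Omega_{U_\alpha}$.

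Now I would apply Behrend's local Lagrangian theorem (\cite[Proposition 4.10]{Behrend}). The dimension count $\dim C_\alpha = \dim U_\alpha = \dim M$ follows from the construction of $C_\alpha$ as a flat degeneration of a subscheme of dimension $\dim U_\alpha$. The vanishing of the tautological one-form on the generic points of the irreducible components of $C_\alpha$ is where the almost closed condition $d\omega_\alpha \in I_{X_\alpha}\cdot \Omega_{U_\alpha}^2$ enters: the tautological one-form pulls back to $\omega_\alpha$ on $\Gamma_{\omega_\alpha}$, its differential $d\omega_\alpha$ vanishes on $X_\alpha$, so upon rescaling by $t$ and passing to the flat limit the induced form on $C_\alpha$ vanishes generically on each component. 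This gives the conic Lagrangian property locally.

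The main obstacle is the compatibility on overlaps: the local smooth ambient schemes and almost closed one-forms $\omega_\alpha$, $\omega_\beta$ are not canonical, so one has to check that the various $C_\alpha$ assemble into the single globally defined cone $C \subset \Omega_M$. This is handled by invoking the gluing isomorphisms $\psi_{\alpha\beta}$ of Definition \ref{defn_semiPOT} together with the uniqueness in Proposition \ref{prop_obstruction_cone}: on each overlap $X_{\alpha\beta}$ both $C_\alpha|_{X_{\alpha\beta}}$ and $C_\beta|_{X_{\alpha\beta}}$ sit inside $\Omega_M|_{X_{\alpha\beta}}$ and are pullbacks of the same subsheaf $cv \hookrightarrow \Ob_\phi = \Omega_X$, hence they coincide there. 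Once this compatibility is in place, the Lagrangian property descends from the étale cover to the global $C \subset \Omega_M$, completing the proof.
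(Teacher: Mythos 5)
Your proposal is correct and follows essentially the same route as the paper: reduce to the \'etale-local situation where $X$ is cut out by an almost closed one-form (using the uniqueness in Proposition \ref{prop_obstruction_cone} to make the local cones match the global $C$), and then invoke Behrend's local Lagrangian theorem for the obstruction cone of an almost closed one-form. The paper states this in two lines, citing \cite[Theorem 4.9]{Behrend}; you have merely expanded the same argument, including the overlap compatibility that the paper leaves implicit.
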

\begin{proof}
The problem is local, hence we may assume that $X$ is cut out by an almost closed one form 
$\omega\in\Omega^1_{M}$. Then the result is from \cite[Theorem 4.9]{Behrend}. 
\end{proof}

Recall that in \cite[\S 4]{Behrend}, let $\mathcal{L}_X(\Omega_M)$ is the subgroup of $\mathcal{Z}_n(\Omega_M)$ generated by the conic Lagrangian prime cycles supported on $X$. 
Let $V$ be a prime cycle of $M$, one has $N^*_{V/M}$  the closure of the conormal bundle of smooth part of $V$ inside $M$. 
So in 
 (Section 4.1, \cite{Behrend})   the following isomorphism of groups is defined:
\begin{equation}\label{cycle_Lagrangian}
L:  \mathcal{Z}_{*}(X) \to \mathfrak{L}_{X}(\Omega_{M})
\end{equation}
which is given by
$$V\mapsto (-1)^{\dim(V)}N^*_{V/M}.$$

Conversely there is an isomorphism:
\begin{equation}\label{Lagrangian_cycle}
\pi:  \mathfrak{L}_{X}(\Omega_{M})\to \mathcal{Z}_{*}(X) 
\end{equation}
which is given by
$$W\mapsto (-1)^{\dim(\pi(W))}\pi(W),$$
where $\pi: W\to X$ is the projection.  The homomorphisms $L$ and $\pi$ are inverse of each other.

Fix an embedding $X\to M$ of the DM stack $X$ into a smooth DM stack $M$. We have the following diagram due to Behrend in 
Diagram (2) of \cite{Behrend}.
\begin{equation}\label{Key_Diagram}
\xymatrix{
\mathcal{Z}_*(X)\ar[r]^{\eu}_{\cong}\ar[dr]_{c^{M}_0}&F(X)\ar[r]^{\mbox{Ch}}_{\cong}\ar[d]^{c_0^{SM}}&\mathfrak{L}_{X}(\Omega_{M})\ar[dl]^{s^{!}_{\Omega_M}(-)}\\
&A_0(X)&
}
\end{equation}
where $\mathcal{Z}_*(X)$ is the group of integral cycles of $X$, $F(X)$ is the group of constructible functions on $X$.  
The maps $c^{M}_0$, $c_0^{SM}$ and $I(\cdot, [M])$ are degree zero Chern-Mather class, degree zero Chern-Schwartz-Mather class and the Lagrangian intersection with zero section of $\Omega_{M}$, respectively.  Note that in \cite{Behrend}, the notation of Lagrangian intersection with zero section is denoted by $0^{!}_{\Omega_{M}}(\cdot)$. 

We briefly explain the horizontal morphisms in the diagram.  The first map is the local Euler obstruction $\eu$ and it gives an isomorphism from $\mathcal{Z}_*(X)$ to $F(X)$. 
Then the morphism $\mbox{Ch}$ is defined by the isomorphism $\eu$ and the morphism $L$ defined above. 

\begin{cor}(\cite[Corollary 4.15]{Behrend})\label{cor_cone_Ch}
We have 
$$[C]=L(\mathfrak{c}_X)=\mbox{Ch}(\nu_X). $$
\end{cor}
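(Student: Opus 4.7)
The second equality $L(\mathfrak{c}_X) = \mbox{Ch}(\nu_X)$ is tautological from Definition \ref{defn_Behrend_function} and the construction of the diagram (\ref{Key_Diagram}): we have $\nu_X = \eu(\mathfrak{c}_X)$ and $\mbox{Ch} = L \circ \eu^{-1}$. So the substantive content of the corollary is the first equality $[C] = L(\mathfrak{c}_X)$.

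By Theorem \ref{thm_Lagrangian_cone}, $[C] \in \mathfrak{L}_X(\Omega_M)$, while $L(\mathfrak{c}_X) \in \mathfrak{L}_X(\Omega_M)$ by the definition of $L$. Since $\pi: \mathfrak{L}_X(\Omega_M) \to \mathcal{Z}_*(X)$ is the inverse isomorphism to $L$, it suffices to show $\pi([C]) = \mathfrak{c}_X$ as integral cycles on $X$.

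The plan is to verify this identity \'etale-locally, using the cover $\{X_\alpha \to X\}$ from the symmetric semi-perfect obstruction theory $\phi$. On each chart, the almost closed one-form description of \S\ref{subsec_symmetric_SPOT} supplies an embedding $X_\alpha \hookrightarrow M_\alpha$ into a smooth scheme with $X_\alpha$ cut out as $Z(\omega_\alpha)$, and Proposition \ref{prop_obstruction_cone} applied to the surjection $\Omega_{M_\alpha} \twoheadrightarrow \Omega_X|_{X_\alpha}$ produces a local obstruction cone $C_\alpha \subset \Omega_{M_\alpha}$. In this genuine symmetric perfect obstruction theory setting, Behrend's original Corollary 4.15 and the Lagrangian cone analysis of \S 4 of \cite{Behrend} yield $\pi_\alpha([C_\alpha]) = \mathfrak{c}_{X_\alpha}$ in $\mathcal{Z}_*(X_\alpha)$.

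It then remains to glue. We have $\mathfrak{c}_X|_{X_\alpha} = \mathfrak{c}_{X_\alpha}$ by the \'etale-local definition of $\mathfrak{c}_X$ via normal cones of local embeddings, and $\pi([C])|_{X_\alpha} = \pi_\alpha([C_\alpha])$ because the Cartesian construction in Proposition \ref{prop_obstruction_cone} is manifestly compatible with \'etale pullback: both $cv$ and $\Ob_\phi = \Omega_X$ are intrinsic to $X$, independent of the ambient smooth space. Hence the two integral cycles agree on each chart of the cover and so coincide globally by descent. The main obstacle is precisely this last compatibility step, because $[C]$ and $[C_\alpha]$ live in the distinct ambient cotangent bundles $\Omega_M$ and $\Omega_{M_\alpha}$; the resolution is the observation that $\pi$ and $\pi_\alpha$ only record the projected support in $X$ together with a dimension sign, which is intrinsic to $X$, so the comparison passes cleanly through \'etale descent.
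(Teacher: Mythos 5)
Your proposal is correct and follows the same route the paper (implicitly) takes: the paper offers no argument beyond the citation of Behrend's Corollary 4.15, and — exactly as in the proof of Theorem \ref{thm_Lagrangian_cone} — the intended justification is that the statement is \'etale-local, reduces on each chart to the almost-closed-one-form presentation where Behrend's original result applies, and glues because $cv\hookrightarrow\Ob_\phi=\Omega_X$ is intrinsic to $X$ and $\pi$, $\pi_\alpha$ only record projected supports, dimensions and multiplicities. You have simply written out the localization and descent bookkeeping that the paper leaves tacit, which is a faithful expansion rather than a different proof.
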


\begin{thm}\label{thm_Behrend_Euler_semiPOT}
Let $X$ be a proper Deligne-Mumford stack with a symmetric semi-perfect obstruction theory 
$\phi=\{\phi_\alpha, X_\alpha, E_\alpha, \psi_{\alpha\beta}\}_{\alpha\in \Lambda}$. Then 
$$\int_{[X,\phi]^{\virt}}1=\chi(X, \nu_X).$$
\end{thm}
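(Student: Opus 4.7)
The plan is to reduce the theorem to the already-established identity $\int_X \alpha_X = \chi(X,\nu_X)$ of Theorem \ref{thm_MacPherson_index}, by identifying the virtual cycle with the degree-zero part of the Aluffi class via the Lagrangian intersection diagram \eqref{Key_Diagram}. Concretely, I begin by choosing, as in the setup preceding Theorem \ref{thm_Lagrangian_cone}, a closed immersion $X \hookrightarrow M$ into a smooth projective Deligne--Mumford stack $M$ with projective coarse moduli space. The conormal sequence gives a canonical epimorphism $\Omega_M|_X \twoheadrightarrow \Omega_X = \Ob_\phi$, so I can apply Proposition \ref{prop_obstruction_cone} with $\Omega = \Omega_M|_X$ to obtain the obstruction cone $C \subset \Omega_M$.

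Next, I invoke Proposition \ref{prop_virt_obstruction_cone} to rewrite the virtual fundamental cycle as
\begin{equation*}
[X,\phi]^{\virt} = s^{!}_{\Omega_M}[C] \in A_*(X).
\end{equation*}
By Theorem \ref{thm_Lagrangian_cone}, $C$ is a conic Lagrangian cycle in $\Omega_M$, so $[C] \in \mathfrak{L}_X(\Omega_M)$, and by Corollary \ref{cor_cone_Ch} we have the identification $[C] = \mathrm{Ch}(\nu_X) = L(\mathfrak{c}_X)$. Now I feed this into the commutative triangle \eqref{Key_Diagram}: the downward arrow $s^{!}_{\Omega_M}$ applied to $L(\mathfrak{c}_X) = \mathrm{Ch}(\eu^{-1}(\nu_X)) = \mathrm{Ch}(\eu(\mathfrak{c}_X))$ coincides, by commutativity, with $c_0^M(\mathfrak{c}_X)$, which is precisely the degree-zero component of the Aluffi class $\alpha_X$.

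Since $X$ is proper, integrating yields
\begin{equation*}
\int_{[X,\phi]^{\virt}} 1 \;=\; \deg\bigl(s^{!}_{\Omega_M}[C]\bigr) \;=\; \deg\bigl(c_0^{M}(\mathfrak{c}_X)\bigr) \;=\; \int_X \alpha_X,
\end{equation*}
and then an application of Theorem \ref{thm_MacPherson_index} gives $\int_X \alpha_X = \chi(X,\nu_X)$, finishing the proof.

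The hard part is genuinely packaged into the preceding results, which is why the final step is essentially a diagram chase. The conceptual obstacle is Theorem \ref{thm_Lagrangian_cone}: the conclusion that $C$ is Lagrangian requires the local presentation of a symmetric semi-perfect obstruction theory by an almost closed one-form on a smooth ambient scheme, and then the descent/gluing of these local Lagrangian pieces into a single cycle on the global $\Omega_M$. A subsidiary point to verify is that the key diagram \eqref{Key_Diagram}, originally formulated by Behrend in the perfect obstruction theory setting, remains valid for semi-perfect obstruction theories on a Deligne--Mumford stack---this amounts to checking that the Kresch-style Gysin intersection $s^{!}_{\Omega_M}$ applied to conic Lagrangian cycles agrees with the degree-zero Chern--Schwartz--MacPherson class, which in turn rests on the MacPherson index formula and is \'etale local in nature, hence survives the semi-perfect gluing.
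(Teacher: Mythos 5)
Your proposal is correct and follows essentially the same route as the paper's proof: rewrite $[X,\phi]^{\virt}=s^{!}_{\Omega_M}[C]$ via Proposition \ref{prop_virt_obstruction_cone}, identify $[C]=\mbox{Ch}(\nu_X)$ via Theorem \ref{thm_Lagrangian_cone} and Corollary \ref{cor_cone_Ch}, chase the diagram \eqref{Key_Diagram} to land on $c_0^{M}(\mathfrak{c}_X)$, and conclude with Theorem \ref{thm_MacPherson_index}. The extra remarks you add about verifying the diagram in the semi-perfect setting are reasonable but not part of the paper's (terser) argument.
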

\begin{proof}
From Proposition \ref{prop_virt_obstruction_cone}, and the fact that the virtual dimension is zero, 
$$\int_{[X,\phi]^{\virt}}1=\# s^{!}_{\Omega_M}([C]).$$
While from the diagram (\ref{Key_Diagram}) and Corollary \ref{cor_cone_Ch}, $\mbox{Ch}(\nu_X)=[C]$, so 
\begin{align*}
\# s^{!}_{\Omega_M}([C])&=\int_{X}c_0^{SM}(\nu_X)\\
&=\int_{X}c_0^{M}(\mathfrak{c}_X)\\
&=\chi(X,\nu_X)
\end{align*}
by Theorem \ref{thm_MacPherson_index}. 
\end{proof}
\begin{rmk}
By \cite[Proposition 3.2]{CL}, the invariant $\int_{[X,\phi]^{\virt}}1$ is deformation invariant. 
\end{rmk}

\section{$D$-critical schemes}\label{sec_d_critical_schemes}

\subsection{Introduction}

Motivated by Donaldson-Thomas theory and the $(-1)$-shifted symplectic derived schemes, Joyce introduced the notion of $d$-critical schemes or $d$-critical analytic spaces.  A parallel notion introduced by Kiem and Li \cite{KL1} is called virtual critical manifolds, and Kiem-Li proves that the $d$-critical analytic spaces and virtual critical manifolds are equivalent.  Classical examples include the moduli space of stable simple complexes on Calabi-Yau threefolds, and these moduli spaces are the underlying schemes of the lifted $(-1)$-shifted symplectic derived schemes of \cite{PTVV}.  These moduli schemes admits a natural symmetric obstruction theory. 

From Joyce \cite{Joyce},  it is not known if a $d$-critical scheme admits a symmetric obstruction theory, although locally it does.  In this section we prove that Joyce's $d$-critical scheme admits a symmetric semi-perfect obstruction theory.  Since a $d$-critical analytic space is equivalent to a virtual critical manifold in \cite{KL1}, and Kiem-Li already proved that virtual critical manifolds admits a symmetric semi-perfect obstruction theory, hence in the algebraic sense a $d$-critical scheme admits a symmetric semi-perfect obstruction theory.  We provide the detail proof for $d$-critical schemes and all credits belong to Kiem-Li in \cite{KL1}. 

As application we show that  the dual obstruction sheaf as in \cite{JT} is a $d$-critical scheme. We also show that its compactification is also a $d$-critical scheme under some restrictions. Therefore the dual obstruction sheaf admits naturally a  symmetric semi-perfect obstruction theory.  

Let $X$ be a scheme which admits a  symmetric semi-perfect obstruction theory, and furthermore assume that there exists a $\G_m$ action on $X$ which makes the symmetric semi-perfect obstruction theory $\G_m$-equivariant.  The $\G_m$-action naturally gives rise to a cosection and we show that the Kiem-Li localized invariant is equal to the Behrend's weighted Euler characteristic by the Behrend function.  This generalizes the result in \cite[Theorem 5.20]{Jiang2} to symmetric semi-perfect obstruction theory.

\subsection{Joyce's $d$-critical schemes}\Label{subsec_d_critical_scheme}

The algebraic $d$-critical scheme is the classical model for the $(-1)$-shifted symplectic derived scheme as developed by PTVV in \cite{PTVV}. In the same paper \cite{PTVV}, PTVV prove that the moduli space of stable coherent sheaves or simple complexes over Calabi-Yau threefolds admits a 
$(-1)$-shifted symplectic derived structure, hence their underlying moduli  scheme has an algebraic $d$-critical locus structure. 
Thus the algebraic $d$-critical locus of Joyce provides the classical schematical framework for the moduli space of stable simple complex over smooth Calabi-Yau threefolds. 

To define the algebraic $d$-critical scheme, we first recall the following theorem in \cite{Joyce}:

\begin{thm}\label{property:local:data}(\cite{Joyce})
Let $X$ be a $\kappa$-scheme, which is locally of finite type. Then there exists a sheaf $\sS_{X}$ of $\kappa$-vector spaces on $X$, unique up to canonical isomorphism, which is uniquely characterized by the following two properties:

(i) Suppose that $R\subseteq X$ is Zariski open, $U$ is a smooth $\kappa$-scheme, and $i: R\hookrightarrow U$ is a closed embedding. Then there is an exact sequence of sheaves of $\kappa$-vector spaces on $R$:
$$0\rightarrow I_{R,U}\longrightarrow i^{-1}(\oO_{U})\stackrel{i^{\#}}{\longrightarrow}\oO_{X}|_{R}\rightarrow 0,$$
where $\oO_{X}, \oO_{U}$ are the structure sheaves of $X$ and $U$, and $i^{\#}$ is the morphism of sheaves over $R$. 
There is also an exact sequence of sheaves of $\kappa$-vector spaces over $R$:
$$0\rightarrow\sS_{X}|_{R}\stackrel{\iota_{R,U}}{\longrightarrow}\frac{i^{-1}(\oO_{U})}{I^{2}_{R,U}}\stackrel{d}{\longrightarrow}
\frac{i^{-1}(T^*U)}{I_{R,U}\cdot i^{-1}(T^*U)}$$
where $d$ maps $f+I_{R,U}^2$ to $df+I_{R,U}\cdot i^{-1}(T^*U)$. 

(ii) If $R\subseteq S\subseteq X$ are Zariski open, and $U, V$ are smooth $\kappa$-schemes, and 
$$i: R\hookrightarrow U$$
$$j: S\hookrightarrow V$$
are closed embeddings. Let 
$$\Phi: U\to V$$ be a morphism with $\Phi\circ i=j|_{R}: R\to V$.  Then the following diagram of sheaves on $R$ commutes:
\begin{equation}\label{diagram:Algebraic:Dcritical:Locus}
\begin{CD}
0 @ >>>\sS|_{R}@ >{\iota_{S,V}|_{R}}>> \frac{j^{-1}(\oO_{V})}{I^2_{S,V}}|_{R}@ >{d}>>
\frac{j^{-1}(T^*V)}{I_{S,V}\cdot j^{-1}(T^*V)}|_{R}@
>>> 0\\
&& @VV{\id}V@VV{i^{-1}(\Phi^{\#})}V@VV{i^{-1}(d\Phi)}V \\
0@ >>> \sS_{X}|_{R} @ >{\iota_{R,U}}>>\frac{i^{-1}(\oO_{U})}{I^2_{R,U}}@ >{d}>> \frac{i^{-1}(T^*U)}{I_{R,U}\cdot i^{-1}(T^*U)} @>>> 0.
\end{CD}
\end{equation}
Here $\Phi: U\to V$ induces
$$\Phi^{\#}: \Phi^{-1}(\oO_{V})\to\oO_{U}$$
on $U$, and we have:
\begin{equation}\label{map1}
i^{-1}(\Phi^{\#}): j^{-1}(\oO_{V})|_{R}=i^{-1}\circ\Phi^{-1}(\oO_{V})\to i^{-1}(\oO_{U}),
\end{equation}
a morphism of sheaves of $\kappa$-algebras on $R$.  As 
$\Phi\circ i=j|_{R}$, then (\ref{map1}) maps to $I_{S,V}|_{R}\to I_{R,U}$,  and 
$I^2_{S,V}|_{R}\to I_{R,U}^{2}$.  Thus (\ref{map1}) induces the morphism in the second column of (\ref{diagram:Algebraic:Dcritical:Locus}). Similarly,  $d\Phi: \Phi^{-1}(T^*V)\to T^*U$ induces the third column of 
(\ref{diagram:Algebraic:Dcritical:Locus}).
\end{thm}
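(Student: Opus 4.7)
The plan is to construct $\sS_X$ locally via the exact sequence in (i), verify that the construction is canonically independent of the chosen embedding by using the functoriality in (ii), and then glue the local pieces to a global sheaf on $X$. Uniqueness up to canonical isomorphism will come for free, since (i) pins down $\sS_X|_R$ as a specific kernel and (ii) forces two such candidates to be identified on overlaps.

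For the local construction, given a Zariski open $R \subseteq X$ and a closed embedding $i: R \hookrightarrow U$ into a smooth $\kappa$-scheme, I would set
\[
\sS_X^{(i,U)} := \ker\!\Bigl( d: \tfrac{i^{-1}(\oO_U)}{I_{R,U}^2} \longrightarrow \tfrac{i^{-1}(T^*U)}{I_{R,U}\cdot i^{-1}(T^*U)} \Bigr),
\]
a sheaf of $\kappa$-vector spaces on $R$, with $\iota_{R,U}$ tautologically the kernel inclusion. Property (i) then holds by definition, and the substance of the theorem is that $\sS_X^{(i,U)}$ is independent of the choice of $(i,U)$ up to a canonical isomorphism.

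The main technical step, and the place I expect the most work, is the graph comparison: for two embeddings $i: R \hookrightarrow U$ and $j: R \hookrightarrow V$, form the diagonal closed immersion $(i,j): R \hookrightarrow U \times V$ into a new smooth $\kappa$-scheme. Apply (ii) with $\Phi = \pr_U$ to obtain a morphism of exact sequences from $(i,U)$ to $((i,j), U\times V)$, and with $\Phi = \pr_V$ to obtain one from $(j,V)$ to $((i,j), U\times V)$. Because the projections are smooth and their restrictions along $(i,j)$ recover $i$ and $j$ respectively, a short diagram chase on the second and third columns of the exact sequence of (i), combined with the five-lemma, shows that both induced maps on kernels are isomorphisms. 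Composing gives a canonical isomorphism $\sS_X^{(i,U)}|_R \stackrel{\cong}{\longrightarrow} \sS_X^{(j,V)}|_R$, and the most delicate bookkeeping will be verifying its independence of auxiliary choices via the triple-embedding cocycle on $U \times V \times W$.

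With the canonical local isomorphisms in hand, the cocycle compatibility on triple overlaps follows from the same triple-product diagram, so the $\sS_X^{(i,U)}|_R$ descend to a well-defined sheaf $\sS_X$ on $X$. Property (ii) for an arbitrary morphism $\Phi: U \to V$ then reduces, by factoring $\Phi$ through its graph $\Gamma_\Phi \subset U \times V$ followed by $\pr_V$, to the already-established compatibilities. Finally, uniqueness is automatic: any $\sS'_X$ satisfying (i) and (ii) is locally the same kernel of $d$, hence canonically isomorphic to $\sS_X$ locally, and (ii) forces these local isomorphisms to glue to a unique global isomorphism of sheaves on $X$.
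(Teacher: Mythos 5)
The paper does not actually prove this statement: it is quoted verbatim from Joyce (\cite{Joyce}, Theorem 2.1) as background, so the only meaningful comparison is with Joyce's original argument. Your overall architecture is the same as his: define $\sS_X^{(i,U)}$ locally as $\ker(d)$ so that (i) holds by construction, compare two embeddings $i:R\hookrightarrow U$ and $j:R\hookrightarrow V$ through the product embedding $(i,j):R\hookrightarrow U\times V$ and the two projections, check a cocycle condition, glue, and recover (ii) for a general $\Phi$ by factoring through its graph. That skeleton is correct and is indeed how the theorem is proved.

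The genuine gap is the step you dispatch with ``a short diagram chase \dots combined with the five-lemma.'' The five-lemma is inapplicable here: in the comparison diagram induced by $\Phi=\pr_U$, the map on middle terms $i^{-1}(\oO_U)/I^2_{R,U}\to (i,j)^{-1}(\oO_{U\times V})/I^2_{R,U\times V}$ is very far from surjective (functions on $U\times V$ depending on the $V$-coordinates need not come from $U$), and likewise for the third column, so no lemma of homological algebra forces the induced map on kernels to be an isomorphism. Surjectivity of the kernel map is precisely the analytic/algebraic heart of the theorem: one must show that any $g\in \oO_{U\times V}$ with $dg\in I_{R,U\times V}\cdot T^*(U\times V)$ agrees modulo $I^2_{R,U\times V}$ with $\pr_U^*f$ for some $f$ on $U$ satisfying $df\in I_{R,U}\cdot T^*U$. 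Joyce proves this by a local coordinate computation (reducing, after shrinking, to the case where the second embedding is obtained from the first by adding extra ``normal'' coordinates, and then integrating the condition $dg\equiv 0 \bmod I$ in those directions); injectivity likewise needs the nontrivial fact that $(\pr_U^{\#})^{-1}(I^2_{R,U\times V})=I^2_{R,U}$. There is also a presentational circularity to repair: you ``apply (ii)'' during the construction, but (ii) is part of what is being proved, so the comparison maps must be constructed and verified directly from $\Phi^{\#}$ and $d\Phi$, exactly at the step where your argument is missing. Until that key isomorphism is established by an actual local computation, the gluing and uniqueness parts of your outline, which are otherwise fine, have nothing to stand on.
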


According to \cite{Joyce}, there is a natural decomposition 
$$\sS_{X}=\sS_{X}^0\oplus \kappa_{X}$$
and $\kappa_{X}$ is the constant sheaf on $X$ and $\sS_{X}\subset \sS_{X}$ is the kernel of the composition:
$$\sS_{X}\to\oO_{X}\stackrel{i_{X}^{\#}}{\longrightarrow}\oO_{X^{\red}}$$
with $X^{\red}$ the reduced $\kappa$-scheme of $X$, and $i_{X}: X^{\red}\hookrightarrow X$ the inclusion. 

\begin{defn}\label{algebraic:d:critical:locus}
An {\em algebraic d-critical scheme} over the field $\kappa$ is a pair $(X,s)$, where $X$ is a $\kappa$-scheme, locally of finite type, and 
$s\in H^0(\sS_{X}^0)$ for $\sS_{X}^{0}$ in Theorem \ref{property:local:data}. These data satisfy the following conditions:
for any $x\in X$, there exists a Zariski open neighbourhood $R$ of $x$ in $X$, a smooth $\kappa$-scheme $U$, a regular function 
$f: U\to\kappa$, and a closed embedding $i: R\hookrightarrow U$, such that $i(R)=\Crit(f)$ as $\kappa$-subschemes of $U$, 
and $\iota_{R,U}(s|_{R})=i^{-1}(f)+I^2_{R,U}$.  We call the quadruple $(R,U,f,i)$ a {\em critical chart} on $(X,s)$. 
\end{defn}

Some properties of $(X,s)$ are as follows:

\begin{thm}\cite{Joyce}
Let $(X,s)$ be an algebraic $d$-critical scheme, and 
$(R,U,f,i), (S, V,g,j)$ be critical charts on $(X,s)$.  Then for each $x\in R\cap S\subset X$ there exists subcharts 
$$(R^\prime, U^\prime, f^\prime, i^\prime)\subseteq (R, U, f, i),$$
$$(S^\prime, V^\prime, g^\prime, j^\prime)\subseteq (S, V, g, j)$$
with $x\in R^\prime\cap S^\prime\subseteq X$, a critical chart $(T, W,h,k)$ on $(X,s)$, and embeddings
$$\Phi: (R^\prime, U^\prime, f^\prime, i^\prime)\hookrightarrow (T, W, h, k)$$ and 
$$\Psi: (S^\prime, V^\prime, g^\prime, j^\prime)\hookrightarrow (T, W, h, k).$$
\end{thm}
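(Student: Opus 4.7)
The plan is to first localize the statement around $x$, then apply a stabilization procedure that brings the two charts into matching form, and finally use a Morse-theoretic comparison to construct the common chart $(T,W,h,k)$.

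First I would shrink both charts to small affine Zariski open neighborhoods of $x$: replace $R$ and $S$ by Zariski open subsets of $X$ containing $x$, and correspondingly shrink $U$ and $V$ to open neighborhoods of $i(x)$ and $j(x)$. This yields subcharts $(R_0, U_0, f_0, i_0) \subseteq (R, U, f, i)$ and $(S_0, V_0, g_0, j_0) \subseteq (S, V, g, j)$, and reduces the problem to producing a common chart on an arbitrary small neighborhood of $x$. Since all properties to be checked are local, this is a legitimate reduction.

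Second, I would stabilize both shrunken charts. The stabilization operation $(R, U, f, i) \mapsto (R, U \times \aaa^n, f + q, (i, 0))$, where $q$ is a nondegenerate quadratic form on $\aaa^n$, preserves the underlying $d$-critical data on $R$ but enlarges the smooth ambient scheme. Stabilize the two charts by adding appropriate quadratic forms so that $U_0' := U_0 \times \aaa^a$ and $V_0' := V_0 \times \aaa^b$ have the same dimension at $i(x)$ and $j(x)$. This step ensures the two charts live in smooth ambient schemes of equal dimension and puts them on an equal footing for a direct comparison. The obvious inclusions $U_0 \hookrightarrow U_0'$ and $V_0 \hookrightarrow V_0'$ give embeddings of the original charts into the stabilized ones.

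Third, the core construction: after stabilization, both charts present the same $d$-critical structure at $x$ via smooth schemes of equal dimension. I would invoke the \emph{isomorphism theorem for stabilized critical charts} (a Hensel / formal-completion argument, using that both charts induce the same section $s$ of $\sS_X^0$ via the commutative diagram~(\ref{diagram:Algebraic:Dcritical:Locus})): after possibly further shrinking, there exists an \'etale-local isomorphism $\Theta: U_0' \to V_0'$ of smooth schemes with $\Theta \circ i_0' = j_0'$ and $g_0' \circ \Theta = f_0'$. Take $(T, W, h, k) = (R_0 \cup S_0, V_0', g_0 + q', j_0')$; then $\Psi$ is induced by the inclusion $V_0 \hookrightarrow V_0'$, while $\Phi$ is induced by $U_0 \hookrightarrow U_0'$ composed with $\Theta$. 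Verifying that $(T,W,h,k)$ is a genuine critical chart on $(X,s)$---namely $\Crit(h) = k(T)$ and $\iota_{T,W}(s|_T) = k^{-1}(h) + I^2_{T,W}$---follows from diagram~(\ref{diagram:Algebraic:Dcritical:Locus}) and the fact that stabilization preserves the defining section of $\sS_X^0$.

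The main obstacle is the existence of the \'etale-local isomorphism $\Theta$ in the third step. This is essentially an algebraic Morse-theoretic splitting lemma: two regular functions on smooth schemes of equal dimension whose critical loci carry the same $d$-critical structure near a point are \'etale-locally conjugate by a change of coordinates (after sufficient quadratic stabilization). The proof requires careful use of Hensel's lemma and the formal local structure of critical points; once this isomorphism is in hand, the remaining verifications are essentially bookkeeping with the sheaf $\sS_X^0$. All of this, as noted in the surrounding text, is due to Kiem--Li in \cite{KL1} in the analytic setting, and the present statement transports their argument into the algebraic $d$-critical framework of Joyce.
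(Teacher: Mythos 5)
The paper states this result without proof, as a citation of Joyce's Theorem 2.20, so the only meaningful comparison is with Joyce's own argument. Measured against that, your proposal has a genuine gap at its core. Step 3 invokes an ``isomorphism theorem for stabilized critical charts'': that two critical charts of equal ambient dimension inducing the same section of $\sS_X^0$ near $x$ are (\'etale-locally) conjugate by a function-preserving isomorphism $\Theta$ with $g'\circ\Theta=f'$. This uniqueness statement is at least as strong as the theorem you are trying to prove; in Joyce's development it is a \emph{consequence} of Theorem 2.20 together with his comparison of two embeddings into a common chart, so using it here is circular. Moreover, even granting such a $\Theta$, it only exists \'etale-locally (already splitting off one nondegenerate quadratic direction requires extracting square roots), whereas an embedding of critical charts in Joyce's sense is an honest locally closed immersion of schemes $\Phi\colon U'\to W$ with $f'=h\circ\Phi$ and $\Phi\circ i'=k|_{R'}$; an \'etale correspondence does not produce this. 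Finally, your candidate chart $(T,W,h,k)=(R_0\cup S_0,\,V_0',\,g_0+q',\,j_0')$ is ill-formed: $j_0'$ is only defined on $S_0$, so it cannot serve as the closed embedding of $R_0\cup S_0$; at best you may take $T$ to be a common Zariski neighbourhood of $x$ inside $R_0\cap S_0$.

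Joyce's actual route avoids any uniqueness statement. After shrinking to affines, he extends the transition identifications to morphisms of the ambient smooth schemes, $\phi\colon U'\to V$ with $\phi\circ i'=j|_{R'}$ and $\psi\colon V'\to U$ with $\psi\circ j'=i|_{S'}$ (possible because the targets are smooth and the sources affine), and builds the common chart on an open neighbourhood $W$ of $(i(x),j(x))$ in $U'\times V'$ -- a \emph{larger} ambient space of dimension $\dim U+\dim V$ -- embedding $U'$ and $V'$ as the graphs of $\phi$ and $\psi$. The function $h$ is then constructed directly so that it restricts to $f'$ and $g'$ on the two graphs and differs from each by a nondegenerate quadratic form in the normal directions, which is exactly the existence half of the stabilization picture and is controlled by the commutative diagram (\ref{diagram:Algebraic:Dcritical:Locus}). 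If you want to salvage your outline, you should replace step 3 by this product construction; the stabilization in your step 2 then becomes unnecessary. (A small attribution point: this chart-comparison theorem is due to Joyce; the Kiem--Li credit in the surrounding text refers to the semi-perfect obstruction theory statement, not to this result.)
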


We introduce the canonical line bundle of $(X,s)$:

\begin{thm}\label{Canonical:line:bundle:Xs}\cite[Theorem 2.28]{Joyce}
Let $(X,s)$ be an algebraic $d$-critical scheme, and $X^{\red}\subset X$ the associated reduced $\kappa$-scheme. 
Then there exists a line bundle $K_{X,s}$ on $X^{\red}$ which we call the {\em canonical Line bundle} of $(X,s)$, that is natural up to  canonical isomorphism, and is characterized by the following properties:

(i) If $(R,U,f,i)$ is a critical chart on $(X,s)$, there is a natural isomorphism 
$$\iota_{R,U,f,i}: (K_{X,s})|_{R^{\red}}\to i^*(K_{U}^{\otimes 2})|_{R^{\red}}$$
where $K_U$ is the canonical line bundle of $U$.

(ii) Let $\Phi: (R,U,f,i)\hookrightarrow  (S, V,g,j)$ be an embedding of critical charts on $(X,s)$. Then there is an 
isomorphism of line bundles on $\Crit(f)^{\red}$:
$$J_{\Phi}: (K_{U}^{\otimes 2})|_{\Crit(f)}\stackrel{\cong}{\longrightarrow} \Phi|_{\Crit(f)^{\red}}^{*}(K_{V}^{\otimes 2}).$$
Since $i: R\to \Crit(f)$ is an isomorphism as schemes with $\Phi\circ i=j|_{R}$, this gives 
$$i|_{R^{\red}}^{*}(J_{\Phi}): i^{*}(K_{U}^{\otimes 2})|_{R^{\red}}\stackrel{\cong}{\longrightarrow} j^{*}(K_{V}^{\otimes 2})|_{R^{\red}},
$$
and we have:
$$\iota_{S,V,g,j}|_{R^{\red}}=i|_{R^{\red}}^{*}(J_{\Phi})\circ \iota_{R,U,f,i}: (K_{X,s})|_{R^{\red}}
\to j^{*}(K_{V}^{\otimes 2})|_{R^{\red}}.$$
\end{thm}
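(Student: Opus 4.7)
The plan is to construct $K_{X,s}$ by descent from the local candidates $L_{R,U,f,i}:=i^*(K_U^{\otimes 2})|_{R^{\red}}$, one per critical chart, with transitions built out of embeddings of charts. The preceding theorem guarantees that any two charts around a common point admit a third chart into which both embed, so if one can (i) define an isomorphism $J_\Phi$ canonically for each embedding $\Phi$ of critical charts and (ii) verify a cocycle identity over compositions of such embeddings, the local data will descend uniquely to a line bundle on $X^{\red}$, with both properties of the statement holding by construction.

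The crucial step is the construction of $J_\Phi$ for an embedding $\Phi:(R,U,f,i)\hookrightarrow (S,V,g,j)$ with $g\circ\Phi=f$ and $\Phi$ identifying $R$ with an open and closed subscheme of $\Crit(g)\cap\Phi(U)$. The conormal sequence yields $\Phi^*K_V\cong K_U\otimes \det N_{U/V}^{*}$ on $U$. The key geometric input is that, since $R=\Crit(f)=\Crit(g)\cap U$ scheme-theoretically, the Hessian of $g$ in directions normal to $U$ is non-degenerate along $R$: writing $V=U\times\aaa^k$ locally with $U=\{y=0\}$, one has $g=f(x)+\tfrac12 y^{t}Q(x)y+O(y^3)$ with $Q(x)|_R$ invertible. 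This Hessian defines a non-degenerate symmetric pairing $q_\Phi$ on $N_{U/V}|_{R^{\red}}$, hence a canonical trivialisation
\[\alpha_\Phi:(\det N_{U/V})^{\otimes 2}|_{R^{\red}}\stackrel{\cong}{\longrightarrow}\oO_{R^{\red}}.\]
Squaring the adjunction isomorphism and absorbing $(\det N_{U/V}^{*})^{\otimes 2}$ via $\alpha_\Phi$ produces
\[J_\Phi:(K_U^{\otimes 2})|_{R^{\red}}\stackrel{\cong}{\longrightarrow}\Phi^*(K_V^{\otimes 2})|_{R^{\red}},\]
independent of any choice of splitting of the normal bundle sequence.

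For the cocycle, given a further embedding $\Psi:(S,V,g,j)\hookrightarrow(T,W,h,k)$, the exact sequence $0\to N_{U/V}\to N_{U/W}|_U\to N_{V/W}|_U\to 0$ is, on $R^{\red}$, compatible with a block decomposition of the Hessian of $h$: the cross-term vanishes because $dh$ kills $T_U$ along $R$, so the Hessian of $h$ restricts to that of $g$ on $N_{U/V}$ and to that of $h$ on $N_{V/W}$ with no $N_{U/V}\otimes N_{V/W}$ contribution on $R^{\red}$. Multiplicativity of determinants in short exact sequences gives $\alpha_{\Psi\circ\Phi}=\alpha_\Phi\otimes\Phi^*\alpha_\Psi$, whence $J_{\Psi\circ\Phi}=\Phi^*(J_\Psi)\circ J_\Phi$. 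Combining this with the common-enlargement theorem produces, on overlaps of any two critical charts, a canonical transition isomorphism between the local candidates; a second application of the cocycle shows this transition is independent of the chosen common enlargement. The $L_{R,U,f,i}$ therefore descend uniquely to a line bundle $K_{X,s}$ on $X^{\red}$, and $\iota_{R,U,f,i}$ is by construction the restriction of the descent isomorphism to the given chart.

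The main difficulty will be making the Hessian construction fully intrinsic on the scheme $R^{\red}$ rather than pointwise. One needs $q_\Phi$ to be defined directly as a map $N_{U/V}|_{R^{\red}}\otimes N_{U/V}|_{R^{\red}}\to \oO_{R^{\red}}$ from the second-order neighbourhood of $U$ in $V$ and the ideal of $\Crit(g)$, independent of any coordinate splitting; and one must verify non-degeneracy as a statement about line bundles on $R^{\red}$ (using that $R=\Crit(g)\cap U$ scheme-theoretically, so that the map $N_{U/V}|_R\to \Omega_V|_R/\Omega_U|_R$ induced by $dg$ has invertible determinant). Once this intrinsic reformulation is in hand, functoriality under composition of closed embeddings, and hence the cocycle identity, follows by tracking the block decomposition of second-order data.
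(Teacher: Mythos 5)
The paper does not prove this statement: it is quoted verbatim from Joyce (his Theorem 2.28), so there is no in-paper proof to compare against. Your proposal is, however, essentially a correct reconstruction of Joyce's actual argument: define $J_\Phi$ by combining the adjunction isomorphism $\Phi^*K_V\cong K_U\otimes\det N_{U/V}^*$ with the trivialisation of $(\det N_{U/V})^{\otimes 2}|_{R^{\red}}$ coming from the non-degenerate normal Hessian of $g$ along $\Crit(f)$, prove the cocycle identity $J_{\Psi\circ\Phi}=\Phi^*(J_\Psi)\circ J_\Phi$, and descend using the common-enlargement theorem (the theorem stated just before this one in the survey). Two small remarks. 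First, your justification for multiplicativity of $\alpha$ under composition (``the cross-term vanishes'') is not quite the right reason; what one actually uses is that a quadratic form whose restriction to a subbundle is non-degenerate has discriminant equal to the product of the discriminants of the restriction and of the induced form on the quotient, which needs no vanishing of the off-diagonal block. Second, the difficulty you flag at the end --- making the normal Hessian an intrinsic non-degenerate pairing on $N_{U/V}|_{R^{\red}}$ rather than a pointwise object --- is exactly where Joyce invests the work: he does it via an \'etale-local normal form $V\simeq U\times\aaa^n_\kappa$, $g=f+z_1^2+\cdots+z_n^2$ (the same normal form invoked in the proof of Theorem \ref{thm_d_critical_scheme_symmetric_semi_obstruction_theory} in this paper), together with an independence-of-choices check; your sketch correctly identifies this but does not carry it out, and also tacitly assumes a Zariski-local product decomposition of $V$ along $U$ that in general only exists \'etale-locally.
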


\begin{defn}\label{oriented:d:critical:locus}
Let $(X,s)$ be an algebraic $d$-critical scheme, and $K_{X,s}$ the canonical line bundle of $(X,s)$. 
An {\em orientation} on $(X,s)$ is a choice of square root line bundle $K_{X,s}^{1/2}$ for $K_{X,s}$ on $X^{\red}$. 
I.e., an orientation of $(X,s)$ is a line bundle $L$ over $X^{\red}$ and an isomorphism $L^{\otimes 2}=L\otimes L\cong K_{X,s}$. 
A $d$-critical scheme with an orientation will be called an oriented $d$-critical scheme.
\end{defn}

Bussi, Brav and Joyce \cite{BBJ} prove the following interesting result:  Let $(\bX, \omega)$ be a $(-1)$-shifted symplectic derived scheme over $\kappa$ in the sense of \cite{PTVV}, and let $X:=\bt_0(\bX)$ be the associated classical $\kappa$-scheme of 
$\bX$. Then $X$ naturally extends to an algebraic $d$-critical scheme $(X,s)$. The canonical line bundle $K_{X,s}\cong \det(\ll_{\bX})|_{X^{\red}}$ is the determinant line bundle of the cotangent complex $\ll_{\bX}$ of $\bX$.

One of the applications of the $(-1)$-shifted symplectic derived scheme or stack is on moduli problems.  Let $Y$ be a smooth Calabi-Yau threefold over $\kappa$, and $X$ a classical moduli scheme of simple coherent sheaves in $\Coh(Y)$, the abelian category of coherent sheaves on $Y$.  Then in \cite{PTVV}, the authors prove that there is a natural $(-1)$-shifted derived scheme structure 
$\mathbf{X}$ on the moduli space $X$, such that if 
$$i: X\hookrightarrow \mathbf{X}$$
is the inclusion, then the pullback $i^{*}\ll_{\mathbf{X}}$ of the cotangent complex of $\mathbf{X}$ is a perfect obstruction theory of 
$X$, 
thus from the result in \cite{BBJ}, $X$ has an algebraic $d$-critical locus structure.

\begin{example}\label{example_d_critical_scheme_cone}
Consider $X=\Crit(f)$ to be the critical locus for the function 
$$f=x^2y: U=\aaa_{\kappa}^2\to \aaa_{\kappa}^1,$$ 
and then 
$X=\spec(\kappa[x,y]/(xy, x^2))$.  Let $i: X\hookrightarrow U$ be the inclusion. Then $X$ is naturally a $d$-critical scheme, with 
$$\sS_X=\ker\left(\frac{i^{-1}(\oO_U)}{I_X^2}\stackrel{d}{\longrightarrow}\frac{i^{-1}(\Omega_U)}{I_X\cdot \Omega_U} \right).$$
The sheaf $\sS_X=\sS_X^{0}\oplus \kappa_{X}$, where on $X^{\red}$ the function $f$ can be written down as $f=f^0+c$ and $c$ is locally constant.  From Joyce's explanation, $\sS_X^0$ is the coherent sheaf that locally remembers the closed one form $df$. 

We take $\overline{X}=\proj R[y_0:y_1]/(x^2, xy_0)$, where $R=\kappa[x]/(x^2)$.  Then $\overline{X}$ is a compactification of $X$, which is a $\mathbb{P}^1=\proj\kappa[y_0:y_1]$ with a non-reduced point $0\in\mathbb{P}^1$. 
Let $\infty\in \mathbb{P}^1$ be the infinity point.  We explain that $\overline{X}$ is also a $d$-critical scheme. 
Since $\overline{X}\setminus \{\infty\}=X$, then we have a $d$-critical chart:
$$(X, U, f, i)$$
as above.  The section $s\in \sS_{\overline{X}}$ satisfies that 
$$\iota(s|_{X})=f+I_X^2$$
where $\iota: \sS_{\overline{X}}|_{X}\to \frac{i^{-1}(\oO_U)}{I_X^2}$ is the inclusion. 

Since $\overline{X}\setminus \{0\}\cong \aaa_{\kappa}^1$, then we have a $d$-critical chart:
$$(\overline{X}\setminus \{0\}, \aaa_{\kappa}^1, 0, j).$$
Then $\sS_{\overline{X}}|_{\overline{X}\setminus \{0\}}=0$. 
These  two $d$-critical charts glue to give the $d$-critical scheme $\overline{X}$.  As proved in \cite{Joyce}, 
$\overline{X}$ is non orientable. 
\end{example}

\subsection{Symmetric semi-perfect obstruction theory of $d$-critical schemes}\label{subsec_SPOT_d_critical_schemes}

In \cite{KL1}, Kiem-Li has proved that there exists a symmetric semi-perfect obstruction theory on a virtual critical manifold defined in \cite{KL1}.  Taking as analytic spaces, virtual critical manifolds are the same as $d$-critical analytic spaces.  We provide a proof here for $d$-critical schemes and all credits of the result  belong to Kiem-Li in 
\cite{KL1}.

\begin{thm}\label{thm_d_critical_scheme_symmetric_semi_obstruction_theory}
Let $(X,s)$ be a $d$-critical scheme in the sense of \cite{Joyce}. Then $(X,s)$ admits a symmetric semi-perfect obstruction theory. 
\end{thm}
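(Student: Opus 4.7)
My plan is to assemble the desired structure chart-by-chart using Joyce's local presentation of a $d$-critical scheme. For each point $x \in X$ the $d$-critical structure provides a critical chart $(R_\alpha, U_\alpha, f_\alpha, i_\alpha)$ with $i_\alpha: R_\alpha \hookrightarrow U_\alpha$ realizing $R_\alpha$ as $\Crit(f_\alpha)$. The collection $\{R_\alpha\}_{\alpha \in \Lambda}$ is a Zariski, and hence \'etale, open cover of $X$. On each chart I use the standard Hessian perfect obstruction theory of a critical locus: take
\[
E_\alpha := \bigl[T_{U_\alpha}|_{R_\alpha} \stackrel{\triangledown(df_\alpha)}{\longrightarrow} \Omega_{U_\alpha}|_{R_\alpha}\bigr]
\]
in degrees $[-1,0]$, with the natural morphism $\phi_\alpha: E_\alpha \to L_{R_\alpha}$ induced by the closed embedding $i_\alpha$. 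This is manifestly symmetric in the sense of Definition~\ref{defn_symmetric_semiPOT}, with the non-degenerate pairing $\theta_\alpha: E_\alpha \stackrel{\cong}{\longrightarrow} E_\alpha^\vee[1]$ coming from the canonical duality $T_{U_\alpha}^\vee = \Omega_{U_\alpha}$. A direct computation identifies $\Ob_{\phi_\alpha} = h^1(E_\alpha^\vee)$ with the cotangent sheaf $\Omega_{R_\alpha}$, which is intrinsic to $R_\alpha$ and independent of the choice of embedding into $U_\alpha$.

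For the gluing data (\ref{isomorphism_psi_alphabeta}) I take $\psi_{\alpha\beta}$ to be the canonical identifications
\[
h^1(E_\alpha^\vee)|_{R_{\alpha\beta}} \;\cong\; \Omega_{R_{\alpha\beta}} \;\cong\; h^1(E_\beta^\vee)|_{R_{\alpha\beta}}.
\]
Since the local cotangent sheaves $\{\Omega_{R_\alpha}\}$ automatically glue to the global cotangent sheaf $\Omega_X$, the descent condition in Definition~\ref{defn_semiPOT}(1) is immediate, and the obstruction sheaf of the resulting theory is $\Omega_X$. The symmetry datum is built chart-by-chart and requires no compatibility across charts, so at this stage only the $\nu$-equivalence remains to be verified.

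The main obstacle is condition~(2) of Definition~\ref{defn_semiPOT}: that $\phi_\alpha|_{R_{\alpha\beta}}$ and $\phi_\beta|_{R_{\alpha\beta}}$ are $\nu$-equivalent via $\psi_{\alpha\beta}$. I would reduce this to a local assertion using Joyce's common-refinement theorem: around any $p \in R_\alpha \cap R_\beta$, after shrinking, both charts embed as subcharts of a larger critical chart $(T,W,h,k)$, so it suffices to establish $\nu$-equivalence for a single embedding $\Phi: (R',U',f',i') \hookrightarrow (T,W,h,k)$ of critical charts. For such an embedding, the stabilization result used in \cite{BBJ, KL1} provides an \'etale-local splitting $W \cong U' \times N$ under which $h = f' \boxplus q$ for some non-degenerate quadratic form $q$ on the normal bundle $N$ vanishing at the zero section. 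Under this splitting the Hessian complex of $h$ decomposes as the direct sum of the Hessian complex of $f'$ with the acyclic Koszul-type complex $[N \stackrel{q}{\to} N^\vee]$, so that the induced map on $h^1$ of the duals is exactly the canonical isomorphism of cotangent sheaves. Tracing the universal obstruction $\omega(g,T,\overline{T}) \in T_{p,X} \otimes I$ of (\ref{diagram_lifting}) through this direct-sum decomposition then shows that its images in $\Ob(\phi_\alpha,p)\otimes I$ and $\Ob(\phi_\beta,p)\otimes I$ agree under $\psi_{\alpha\beta}$, which is precisely the condition for $\nu$-equivalence in Definition~\ref{defn_nu_equivalence}. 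Combining the three steps yields a symmetric semi-perfect obstruction theory on $(X,s)$.
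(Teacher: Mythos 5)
Your proposal is correct and follows essentially the same route as the paper: cover by Joyce's critical charts, use the Hessian complex $[T_{U_\alpha}|_{X_\alpha}\to\Omega_{U_\alpha}|_{X_\alpha}]$ as the local symmetric obstruction theory, glue the obstruction sheaves via the intrinsic identification with $\Omega_X$, reduce the comparison of two charts to a common refinement $(T,W,h,k)$ via Joyce's stabilization $W\cong U'\times\aaa^n$ with $h=f'+\sum z_i^2$, and observe that the nondegenerate quadratic summand contributes an acyclic piece so that the bundle stacks and obstruction classes agree. The paper's only additional ingredient is the explicit Kiem--Li formula $\Ob(\phi_\alpha,g,T,\overline{T})_p=\rho_\alpha(df_\alpha\circ g_\alpha|_p)$ used to compare the two obstruction classes through the intermediate chart, which is the concrete form of your ``tracing the universal obstruction through the decomposition'' step.
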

\begin{proof}
The $d$-critical scheme $(X,s)$ is covered by the $d$-critical charts $(X_\alpha, U_\alpha, f_\alpha, i_\alpha)_{\alpha\in \Lambda}$, where 
$$i_\alpha: X_\alpha\hookrightarrow X$$
$$f_\alpha: U_\alpha\to \aaa_{\kappa}^1$$
such that $\Crit(f_\alpha)\cong X_\alpha$.  
Locally on each critical chart, there exists a symmetric obstruction theory
\begin{equation}\label{eqn_local_obstruction}
\xymatrix{
E_{\alpha}^{\bullet}:\ar[d]_{\phi} & [T_{U_{\alpha}}|_{X_\alpha}\ar[r]^--{d\circ df^{\vee}}\ar[d]_{df^{\vee}}&
\Omega_{U_\alpha}|_{X_\alpha}\ar[d]^{=}] \\
L_{X_\alpha}^{\bullet}: &[ I_\alpha/I_\alpha^2\ar[r]^{d}&\Omega_{U_\alpha}|_{X_\alpha}]
}
\end{equation}
For any $\alpha, \beta$, and two $d$-critical charts: 
$$(X_\alpha, U_\alpha, f_\alpha, i_\alpha), \quad  (X_\beta, U_\beta, f_\beta, i_\beta),$$
let 
$X_{\alpha\beta}=X_\alpha\cap \beta_\beta$.  From \cite{Joyce}, for any $x\in X_{\alpha\beta}$, there exist 
sub-critical charts 
$$(X^\prime_\alpha, U^\prime_\alpha, f^\prime_\alpha, i^\prime_\alpha)\subset (X_\alpha, U_\alpha, f_\alpha, i_\alpha)$$ and 
$$(X^\prime_\beta, U^\prime_\beta, f^\prime_\beta, i^\prime_\beta)\subset (X_\beta, U_\beta, f_\beta, i_\beta)$$
such that for any $x\in X^\prime_{\alpha}\cap X^\prime_{\beta}\subset X$, there exists a critical chart $(T, W, h, k)$ such that 
$$\Phi:  (X^\prime_\alpha, U^\prime_\alpha, f^\prime_\alpha, i^\prime_\alpha)\hookrightarrow (T, W, h, k)$$ and 
$$\Psi: (X^\prime_\beta, U^\prime_\beta, f^\prime_\beta, i^\prime_\beta)\hookrightarrow  (T, W, h, k)$$
are embeddings of the critical charts. 

For the local symmetric obstruction theory $\phi_\alpha: E_\alpha^\bullet\to L_{X_\alpha}^{\bullet}$ on $X_\alpha$, we show:
\begin{enumerate}
\item For each embedding of critical charts:
$$\Phi:  (X^\prime_\alpha, U^\prime_\alpha, f^\prime_\alpha, i^\prime_\alpha)\hookrightarrow (X_\alpha, U_\alpha, f_\alpha, i_\alpha)$$
we have a morphism
$$\Phi_{\star}: E_{\alpha}^{\prime\bullet} \to E_{\alpha}^{\bullet}|_{X_\alpha^\prime}$$ such that 
it induces an isomorphism on the bundle stacks 
$h^1/h^0(E_{\alpha}^{\prime\bullet\vee})\cong h^1/h^0(E_{\alpha}^{\bullet\vee}|_{X_\alpha^\prime})$.
\item For two embeddings of critical charts: 
$$\Phi:  (X^{\prime\prime}_\alpha, U^{\prime\prime}_\alpha, f^{\prime\prime}_\alpha, i^{\prime\prime}_\alpha)\hookrightarrow (X^\prime_\alpha, U^\prime_\alpha, f^\prime_\alpha, i^\prime_\alpha)$$ and 
$$\Psi:  (X^\prime_\alpha, U^\prime_\alpha, f^\prime_\alpha, i^\prime_\alpha)\hookrightarrow (X_\alpha, U_\alpha, f_\alpha, i_\alpha)$$
we have $(\Psi\circ\Phi)_{\star}=\Psi_{\star}|_{X_{\alpha}^{\prime\prime}}\circ \Phi_{\star}$. 
\end{enumerate}

The claim (2) is from (1).   We prove the property (1).  The embedding $\Phi$ induces the following commutative diagram:
\[
\xymatrix{
X_{\alpha}^\prime\ar@{^{(}->}[r]^{\Phi}\ar@{^{(}->}[d]_{i_\alpha^\prime}& X_\alpha\ar@{^{(}->}[d]^{i_\alpha}\\
U_\alpha^\prime\ar@{^{(}->}[r]^{\Phi}& U_\alpha
}
\]
From \cite[Proposition 2.2.2, Proposition 2.2.3]{Joyce}, shrink $U_\alpha^\prime$ if necessary,  the critical charts satisfy the following properties:
$$U_\alpha\cong U_{\alpha}^\prime\times \aaa_{\kappa}^n$$
where $n=\dim U_\alpha-\dim U_{\alpha}^\prime$, such that 
$$f_\alpha=f_{\alpha}^\prime+ z_1^2+\cdots+z_n^2.$$
Then $E_\alpha^\bullet$ is isomorphic to 
$$E_\alpha^\bullet\cong \Big[T_{U^\prime\times\aaa_{\kappa}^n}|_{X_\alpha}\to \Omega_{U^\prime\times\aaa_{\kappa}^n}|_{X_\alpha}\Big]$$
and the bundle stack 
\begin{align*}
h^1/h^0(E_\alpha^{\bullet\vee})|_{X_{\alpha^\prime}}&=[\Omega_{U_\alpha}|_{X_\alpha}/T_{U_\alpha}|_{X_\alpha}]\\
&=[\Omega_{U^\prime_\alpha\times\aaa_{\kappa}^n}|_{X^\prime_\alpha}/T_{U^\prime_\alpha\times\aaa_{\kappa}^n}|_{X^\prime_\alpha}]\\
&=[\Omega_{U^\prime_\alpha}|_{X^\prime_\alpha}/T_{U^\prime_\alpha}|_{X^\prime_\alpha}]
\end{align*}
where the last equality is true since $[\Omega_{\aaa_{\kappa}^n}/T_{\aaa_{\kappa}^n}]$ is trivial.  This fact can be proved by the following arguments.
Since $h=z_1^2+\cdots+z_n^2$, then the morphism $dh^\vee\circ d: T_{\aaa_{\kappa}^n}\to 
\Omega_{\aaa_{\kappa}^n}$ is an isomorphism, and the morphism send the basis $\{\frac{\partial}{\partial z_1},\cdots,\frac{\partial}{\partial z_n}\}$ to 
$\{dz_1,\cdots, dz_n\}$.  Therefore the quotient $[\aaa_{\kappa}^n/\aaa_{\kappa}^n]$ is trivial. 
\begin{rmk}
We can understand this bundle stack $[\aaa_{\kappa}^n/\aaa_{\kappa}^n]$ as follows.  For simplicity, let $n=1$,  it is known that the stack 
$[\G_m/\G_m]$ where $\lambda\in\G_m$ acts on $\G_m$ by the multiplication.  The stack is just a point since the action has only one orbit.   Then we understand the bundle stack $[\aaa^1_{\kappa}/\aaa^1_{\kappa}]$ is trivial since there is also only one orbit of $\aaa^1_{\kappa}$ by the action of $\aaa^1_{\kappa}$. 
\end{rmk}

We then use (1) and (2) to prove the following: let $(X_\alpha, U_\alpha, f_\alpha, i_\alpha)$ and  
$(X_\beta, U_\beta, f_\beta, i_\beta)$
be two critical charts. Then shrinking $X_\alpha$, $X_\beta$ if necessary, 
\begin{equation}\label{eqn_bundle_stack_intersection}
h^1/h^0(E_\alpha^\vee)|_{X_\alpha\cap X_\beta}\cong h^1/h^0(E_\beta^\vee)|_{X_\alpha\cap X_\beta}.
\end{equation}

From \cite[Theorem 2.20]{Joyce}, for any $x\in X_\alpha\cap X_\beta$, shrinking $X_\alpha, X_\beta$ if necessary, there exists subcritical charts 
$$(X_\alpha^\prime, U_\alpha^\prime, f_\alpha^\prime, i_\alpha^\prime)\hookrightarrow (T_\gamma, W_\gamma, h_\gamma, k_\gamma)$$ and 
$$(X_\beta^\prime, U_\beta^\prime, f_\beta^\prime, i_\beta^\prime)\hookrightarrow (T_\gamma, W_\gamma, h_\gamma, k_\gamma).$$
Let $E_\gamma^\bullet$ be the symmetric obstruction theory of $(T_\gamma, W_\gamma, h_\gamma, k_\gamma)$. Then from property (1), 
$$h^1/h^0(E_\gamma^{\bullet\vee})|_{X_\alpha^\prime}\cong h^1/h^0(E_\alpha^{\bullet\vee})$$
and 
$$h^1/h^0(E_\gamma^{\bullet\vee})|_{X_\beta^\prime}\cong h^1/h^0(E_\beta^{\bullet\vee}).$$
Since $x\in X^\prime_\alpha\cap X^\prime_\beta\subset X_\alpha\cap X_\beta\subset X$, we may choose $X_\alpha^\prime$, $X_\beta^\prime$ small enough such that 
we have subcritical charts:
$$(X_\alpha^\prime\cap X_\beta^\prime, U_\alpha^\prime\cap U_\beta^\prime, 
f_\alpha^\prime|_{U_\alpha^\prime\cap U_\beta^\prime}, i_\alpha^\prime|_{X_\alpha^\prime\cap X_\beta^\prime})\hookrightarrow (T_\gamma, W_\gamma, h_\gamma, k_\gamma)$$ and 
$$(X_\alpha^\prime\cap X_\beta^\prime, U_\alpha^\prime\cap U_\beta^\prime, 
f_\beta^\prime|_{U_\alpha^\prime\cap U_\beta^\prime}, i_\beta^\prime|_{X_\alpha^\prime\cap X_\beta^\prime})\hookrightarrow (T_\gamma, W_\gamma, h_\gamma, k_\gamma)$$
such that we get:
$$h^1/h^0(E_\gamma^{\bullet\vee})|_{X_\alpha^\prime\cap X_\beta^\prime}\cong 
h^1/h^0(E_\alpha^{\bullet\vee})|_{X_\alpha^\prime\cap X_\beta^\prime}$$ and 
$$h^1/h^0(E_\gamma^{\bullet\vee})|_{X_\alpha^\prime\cap X_\beta^\prime}\cong 
h^1/h^0(E_\beta^{\bullet\vee})|_{X_\alpha^\prime\cap X_\beta^\prime}$$
and 
$$h^1/h^0(E_\alpha^{\bullet\vee})|_{X_\alpha^\prime\cap X_\beta^\prime}\cong 
h^1/h^0(E_\beta^{\bullet\vee})|_{X_\alpha^\prime\cap X_\beta^\prime}.$$
The obstruction sheaf $h^1(E_\alpha^{\bullet\vee})=\Omega_{X_\alpha}$ for all $\alpha$ glue to give the obstruction sheaf $\Omega_{X}$.  Hence the Condition (1) in the definition of symmetric semi-perfect obstruction theory is proved. 

Finally we prove that for two local symmetric obstruction theories:
$\phi_\alpha: E_\alpha^\bullet\to L_{X_\alpha}^\bullet$, and $\phi_\beta: E_\beta^\bullet\to L_{X_\beta}^\bullet$, 
$\phi_{\alpha}|_{X_\alpha\cap X_\beta}$ is $\nu$-equivalent to $\phi_{\beta}|_{X_\alpha\cap X_\beta}$. 
Let $p\in X_\alpha\cap X_\beta=X_{\alpha\beta}\subset X$ be a closed point, and let $\overline{T}=\spec \overline{A}$ be an Artin local ring with maximal ideal $\mathfrak{m}_{\overline{T}}$.  Let $I$ be an ideal in $\overline{A}$ such that $I\cdot \mathfrak{m}_{\overline{T}}=0$ and let 
$$T=\overline{T}/I  \quad (A=\overline{A}/I).$$
Consider the following diagram:
\begin{equation}\label{eqn_diagram_nu_equivalent}
\xymatrix{
\spec(A)\ar[r]^{g}\ar@{^{(}->}[d]& X_{\alpha\beta}\ar[d]\ar[r]& X_\alpha\ar@{^{(}->}[r]^{i_\alpha}& U_\alpha\\
\spec(\overline{A})& X_\beta\ar@{^{(}->}[r]^{i_\beta}& U_\beta.
}
\end{equation}
Since $U_\alpha, U_\beta$ are smooth, there are morphisms 
\begin{equation}\label{eqn_star_1}
g_\alpha: \spec(\overline{A})\to U_\alpha; \quad  g_\beta: \spec(\overline{A})\to U_\beta
\end{equation}
extending the morphisms in (\ref{eqn_diagram_nu_equivalent}). 
The functions $f_\alpha, f_\beta$ give 
\begin{equation}\label{eqn_star_2}
df_\alpha:  U_\alpha\to \Omega_{U_\alpha}; \quad  df_\beta: U_\beta\to \Omega_{U_\beta}.
\end{equation}
Let 
\begin{equation}\label{eqn_star_3}
\rho_\alpha: I\otimes_{\kappa}\Omega_{U_\alpha}|_{p}\to I\otimes_{\kappa}\Omega_{X_\alpha}|_{p}; \quad. 
\rho_\beta: I\otimes_{\kappa}\Omega_{U_\beta}|_{p}\to I\otimes_{\kappa}\Omega_{X_\beta}|_{p}.
\end{equation}
be the canonical morphisms.  Then from (\ref{eqn_star_1}), (\ref{eqn_star_2}) and (\ref{eqn_star_3}), we have:
$$\Ob(\phi_\alpha, g, T, \overline{T})_{p}=\rho_{\alpha}(df_{\alpha}\circ g_\alpha|_{p})\in I\otimes_{\kappa}\Omega_{X_{\alpha\beta}}|_{p}$$ and 
$$\Ob(\phi_\beta, g, T, \overline{T})_{p}=\rho_{\beta}(df_{\beta}\circ g_\beta|_{p})\in I\otimes_{\kappa}\Omega_{X_{\alpha\beta}}|_{p}$$
by \cite[Lemma 1.28]{KL}. So we need to show that 
$$\Ob(\phi_\alpha, g, T, \overline{T})_{p}=\Ob(\phi_\beta, g, T, \overline{T})_{p}.$$

We write down the detail composition morphisms.  
$$\spec(\overline{A})\stackrel{g_\alpha}{\longrightarrow}U_{\alpha}\stackrel{df_\alpha}{\longrightarrow}
\Omega_{U_\alpha}\stackrel{\rho_\alpha}{\longrightarrow}I\otimes_{\kappa}\Omega_{X_\alpha},$$
$$\spec(\overline{A})\stackrel{g_\beta}{\longrightarrow}U_{\beta}\stackrel{df_\beta}{\longrightarrow}
\Omega_{U_\beta}\stackrel{\rho_\beta}{\longrightarrow}I\otimes_{\kappa}\Omega_{X_\beta},$$
and the obstruction space is the composition morphisms restricted to $U_{\alpha\beta}, p\in X_{\alpha\beta}$. 
In between we can insert the composition morphism 
$$\spec(\overline{A})\stackrel{g_\gamma}{\longrightarrow}U_{\gamma}\stackrel{df_\gamma}{\longrightarrow}
\Omega_{U_\gamma}\stackrel{\rho_\gamma}{\longrightarrow}I\otimes_{\kappa}\Omega_{X_\gamma},$$
where when restricted to the point $p$, they are all the same. Therefore 
$$\Ob(\phi_\alpha, g, T, \overline{T})_{X_{\alpha\beta}, p}=\Ob(\phi_\beta, g, T, \overline{T})_{X_{\alpha\beta}, p}$$
and we are done. 
\end{proof}

\subsection{Application to virtual signed Euler characteristics} \label{sec_Virtual_Euler}

In this section we apply the former result to the dual obstruction sheaf for a perfect obstruction theory without assuming derived schemes as studied in \cite{JT}.  Note that this result is already proved in \cite{Kiem}.  We also talked about one compactification of the dual obstruction sheaf cone. 

\subsubsection{The dual obstruction sheaf}

For the consistence of notations, we follow \cite{JT}.  Let $X$ be a scheme with a perfect obstruction theory $E_X^\bullet$ in the sense of \cite{BF}, \cite{LT}.   The obstruction sheaf $\Ob_X:=h^1(E_X^{\bullet\vee})$.  Let
$$N:=\spec (\Sym^\bullet \Ob_X)$$
be the abelian cone corresponding to this  obstruction sheaf.  It is a cone over $X$ together with a $\cc^*$-action by scaling the fibers of $N$.  Let $\pi: N\to X$ be the projection. 

From \cite{JT}, 
given a locally free resolution $E_0\stackrel{\phi}{\longrightarrow} E_1\to \Ob_X\to 0$. 
Let $\tau$ be the tautological section of $\pi_{E_1}^*E^{-1}$.
Therefore,  $N=C(\Ob_X)$ is cut out of Tot$\;(E^{-1})|_X$ by the section $\pi_{E_1}^*(\phi)^*(\tau)$ of 
$\pi_{E^{-1}}E^0$.

\subsubsection*{Local model}

Locally we choose a presentation of $(X,E_X^\bullet)$ as the zero locus of a section $s$ of a vector bundle $E\to A$ over a smooth scheme $A$, such that the  complex
$$
\Big[T_A|_X\stackrel{ds}{\longrightarrow}E|_X\Big] \quad\text{is quasi-isomorphic to }\quad
(E_X^\bullet)^\vee.
$$
Let $\tau$ be the tautological section of $\pi_{E}^*E^*$.
Therefore,  $N=C(\Ob_X)$ is cut out of Tot$\;(E^*)|_X$ by the section $\pi_E^*(ds)^*(\tau)$ of $\pi_E^*\Omega_A|_X$. In turn Tot$\;(E^*)|_X$ is cut out of Tot$\;(E^*)$ by $\pi_E^*s$. Therefore the ideal of $N$ in the smooth ambient space Tot$\;(E^*)$ is
\begin{equation}\label{ideal}
\big(\pi_E^*s,\,\pi_E^*(Ds)^*(\tau)\big),
\end{equation}
where we have chosen any holomorphic connection $D$ on $E\to A$ by shrinking $A$ if necessary.

Thinking of the section $s$ of $E\to A$ as a linear function $\widetilde s$ on the fibres of Tot$\;(E^*)$, we find that its critical locus is $N$.  From \cite[Proposition 2.6]{JT}, the function $\widetilde s$ is:
$$
\widetilde s\colon\mathrm{Tot}\;(E^*)\to\aaa_{\kappa}^1\;.
$$
where 
$$\widetilde s=\big\langle\pi_E^*s,\tau\big\rangle=\pi_E^*s^*(\tau)$$
in terms of the tautological section $\tau$ of $\pi_E^*E^*$.  Then
\begin{equation}\label{critical_locus}
N=\Crit(\widetilde s).
\end{equation}
We work in local coordinates $x_i$ for $A$. Trivialising $E$ (with rank $r$) with a basis of sections $e_j$, we get a dual basis $f_j$ for $E^*$ and coordinates $y_j$ on the fibers of Tot$\;(E^*)$.

Then we can write $s=\sum_js_je_j,\ \tau=\sum_{j=1}^ry_jf_j$ and
$$\widetilde s=\sum_{j=1}^r s_jy_j.$$
Therefore
$$
d\;\widetilde s=\sum_jy_jds_j+\sum_js_jdy_j
=\big\langle\tau,\pi_E^*Ds\big\rangle+\sum_js_jdy_j
$$
with zero scheme defined by the ideal
$$
\big(\pi_{E^*}^*(Ds)^*(\tau),\,\pi_E^*s_1,\,\pi_E^*s_2,\,\ldots\big).
$$
This is the same as \eqref{ideal}.

For the scheme $N$, from \cite[Theorem 2.1]{Joyce}, there exists a unique coherent sheaf $\sS_N$, such that 
in the local model $R\subset N$ such that $N_\alpha\cong Crit(\widetilde s)$ for a regular function $\widetilde s:  \widetilde A\to \cc$, then 
there is a section $s\in \sS_N$ such that 
$$\iota(s|_{N_\alpha})=\widetilde s+I_{N_\alpha}^2.$$
Therefore we get a critical chart $(R, \widetilde A, \widetilde s, i)$, where $i: R\hookrightarrow \widetilde A$ is the inclusion.   Since $N$ is covered by open subschemes $R$ such that they give the local models for $N$,  in \cite{Jiang2017}, we show:

\begin{prop}(\cite[Proposition 2.5]{Jiang2017})
$(N,s)$ is a $d$-critical scheme. 
\end{prop}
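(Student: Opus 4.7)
The plan is to use the local model (\ref{critical_locus}), $N|_{A_\alpha} = \Crit(\widetilde{s}_\alpha)$, to assemble critical charts on $N$, and then apply the uniqueness characterization of $\sS_N$ in Theorem~\ref{property:local:data} to produce a global section $s\in H^0(\sS_N^0)$ certifying $(N,s)$ as a $d$-critical scheme.

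First I would fix an \'etale cover $\{A_\alpha \to X\}_{\alpha\in\Lambda}$ on which $(X, E_X^\bullet)$ is presented as the zero locus of a section $s_\alpha$ of a vector bundle $E_\alpha \to A_\alpha$, with $[T_{A_\alpha}|_X \stackrel{ds_\alpha}{\longrightarrow} E_\alpha|_X]$ quasi-isomorphic to $(E_X^\bullet)^\vee$. Set $U_\alpha := \mathrm{Tot}(E_\alpha^\ast)$ and let $i_\alpha : N|_{A_\alpha} \hookrightarrow U_\alpha$ be the closed embedding cut out by the ideal (\ref{ideal}). The local model gives $N|_{A_\alpha} = \Crit(\widetilde{s}_\alpha)$ for the linear-in-fibres function $\widetilde{s}_\alpha = \langle \pi_{E_\alpha}^\ast s_\alpha, \tau_\alpha\rangle$, producing a candidate critical chart $(N|_{A_\alpha}, U_\alpha, \widetilde{s}_\alpha, i_\alpha)$ and a canonical class $\sigma_\alpha := \widetilde{s}_\alpha + I^2_{N_\alpha, U_\alpha}$ in $i_\alpha^{-1}\oO_{U_\alpha}/I^2_{N_\alpha, U_\alpha}$. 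Because $d\widetilde{s}_\alpha$ lies in $I_{N_\alpha, U_\alpha}\cdot i_\alpha^{-1}(T^\ast U_\alpha)$ by definition of the critical locus, Theorem~\ref{property:local:data}(i) places $\sigma_\alpha \in \sS_N|_{N_\alpha}$.

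Next I would verify that the local sections $\{\sigma_\alpha\}$ glue to a global section $s\in H^0(\sS_N)$. Over $A_{\alpha\beta} := A_\alpha\times_X A_\beta$, pass to the smooth product $U_{\alpha\beta} := U_\alpha\times_{A_{\alpha\beta}} U_\beta$ with projections $\Phi_\alpha, \Phi_\beta$ back to $U_\alpha, U_\beta$, and apply Theorem~\ref{property:local:data}(ii). Both pullbacks $\Phi_\alpha^\ast \widetilde{s}_\alpha$ and $\Phi_\beta^\ast \widetilde{s}_\beta$ cut out $N|_{A_{\alpha\beta}}$ inside $U_{\alpha\beta}$, and their difference lies in $I^2_{N_{\alpha\beta}, U_{\alpha\beta}}$ by the same sum-of-squares stabilization argument used in the proof of Theorem~\ref{thm_d_critical_scheme_symmetric_semi_obstruction_theory}: two presentations of $(X, E_X^\bullet)$ differ \'etale-locally by stabilization of the ambient smooth scheme with extra coordinates whose contribution to $\widetilde{s}$ is a sum of squares vanishing to second order along $N$. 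The commutativity of diagram (\ref{diagram:Algebraic:Dcritical:Locus}) then identifies $\sigma_\alpha$ with $\sigma_\beta$ in $\sS_N|_{N_{\alpha\beta}}$, so the descent datum produces $s\in H^0(\sS_N)$. Each $\widetilde{s}_\alpha$ vanishes along $N^{\red}$, so $s$ is killed by $\sS_N \to \oO_N \to \oO_{N^{\red}}$ and thus lies in $H^0(\sS_N^0)$; the quadruples $(N|_{A_\alpha}, U_\alpha, \widetilde{s}_\alpha, i_\alpha)$ are then critical charts covering $N$ satisfying $\iota(s|_{N_\alpha}) = \widetilde{s}_\alpha + I^2_{N_\alpha, U_\alpha}$, as required by Definition~\ref{algebraic:d:critical:locus}.

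The main obstacle I anticipate is precisely this compatibility step. Two \'etale-local presentations $(A_\alpha, E_\alpha, s_\alpha)$ and $(A_\beta, E_\beta, s_\beta)$ of the same $(X, E_X^\bullet)$ are not directly comparable because the vector bundles $E_\alpha, E_\beta$ and the ambient spaces $U_\alpha = \mathrm{Tot}(E_\alpha^\ast), U_\beta = \mathrm{Tot}(E_\beta^\ast)$ need not coincide, and the tautological functions $\widetilde{s}_\alpha, \widetilde{s}_\beta$ live on different schemes. Reducing to a common stabilization in which $E_\alpha, E_\beta$ both appear as direct summands of a larger bundle, and tracking how $\widetilde{s}$ transforms under the induced change of fibre coordinates on $\mathrm{Tot}(E^\ast)$, is the delicate point; once the extra terms are shown to be squared fibre coordinates that vanish to order two along $N$, the rest follows formally from Joyce's uniqueness of $\sS_N$.
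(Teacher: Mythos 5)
Your strategy coincides with the paper's: cover $N$ by the local models $N|_{A_\alpha}=\Crit(\widetilde s_\alpha)$ on $\mathrm{Tot}(E_\alpha^*)$ from (\ref{critical_locus}), take these as critical charts with local sections $\sigma_\alpha=\widetilde s_\alpha+I^2_{N_\alpha,U_\alpha}$ of $\sS_N$, and glue via Theorem \ref{property:local:data}. The paper itself only records the local model and refers the gluing to \cite[Proposition 2.5]{Jiang2017}, so the local half of your argument is exactly what is written here and is fine.

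The gluing step is where the real content lies, and your sketch of it has two problems. First, the assertion that $\Phi_\alpha^\ast\widetilde s_\alpha$ and $\Phi_\beta^\ast\widetilde s_\beta$ both cut out $N|_{A_{\alpha\beta}}$ inside $U_{\alpha\beta}=U_\alpha\times_{A_{\alpha\beta}}U_\beta$ is false: $\Phi_\alpha^\ast\widetilde s_\alpha$ is independent of the fibre coordinates of $E_\beta^*$, so its critical locus is an affine bundle of rank $\rank E_\beta$ over $N|_{A_{\alpha\beta}}$, not $N$ itself. (This is repairable: Theorem \ref{property:local:data}(ii), applied to the two projections and the embedding $(i_\alpha,i_\beta)\colon N|_{A_{\alpha\beta}}\hookrightarrow U_{\alpha\beta}$, reduces the descent of the $\sigma_\alpha$ to showing $\Phi_\alpha^{\#}\widetilde s_\alpha-\Phi_\beta^{\#}\widetilde s_\beta\in I^2_{N,U_{\alpha\beta}}$, and this does not require the pullbacks to be critical charts.) Second, and more seriously, that congruence is precisely what must be proved, and ``the same sum-of-squares stabilization argument'' does not supply it: the fibre product is not a stabilization of either chart (a stabilization adds a nondegenerate quadratic form in the new coordinates, whereas $\Phi_\alpha^{\#}\widetilde s_\alpha$ is constant in them), and the underlying claim --- that two local presentations $(A_\alpha,E_\alpha,s_\alpha)$ and $(A_\beta,E_\beta,s_\beta)$ of the same perfect obstruction theory become, after \'etale shrinking, stabilizations of a common presentation, compatibly with the two induced surjections onto $\Ob_X$ and hence with the two embeddings of $N$ --- is a nontrivial uniqueness-up-to-stabilization lemma that you neither prove nor cite. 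Until that comparison is in place the $\sigma_\alpha$ are not known to agree on overlaps; establishing it is exactly the content of \cite[Proposition 2.5]{Jiang2017}.
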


Then from Theorem \ref{thm_d_critical_scheme_symmetric_semi_obstruction_theory}, $N$ admits a symmetric semi-perfect obstruction theory $\phi=\{\phi_\alpha, N_\alpha, E_\alpha, \psi_{\alpha\beta}\}_{\alpha\in\Lambda}$
with obstruction sheaf $\Omega_N$. 

In \cite{Kiem}, Kiem  defined the cosection  of a semi-perfect obstrution theory and the cosection here  
$$\sigma: \Omega_N\to \oO_N$$
 is constructed 
by taking the dual of the vector field 
$$v_x=\frac{d}{d\lambda}(\lambda\cdot x)|_{\lambda=1}$$
of the $\G_m$-action.  We fix a global embedding $N\hookrightarrow \widetilde A$ into a higher dimensional smooth scheme $\widetilde A$ such that $\widetilde A\to A$ is a vector bundle over a smooth scheme $A$ and $X\hookrightarrow A$ is the global immersion of $X$, see \cite[\S 4]{JT}.  We have a cartesian diagram 
\begin{equation}\label{diagram__N_Cone}
\xymatrix{
C\ar@{^{(}->}[r]\ar[d]& \Omega_{\widetilde A}|_{N}\ar[d]\\
cv\ar@{^{(}->}[r]& \Omega_N
}
\end{equation}
where $cv$ is the coarse moduli sheaf of the intrinsic normal cone, and $C$ is the unique lifting cone making the diagram commute.  The important key point is that 
$$C\hookrightarrow \Omega_{\widetilde A}|_{N}\subset \Omega_{\widetilde A}$$
is Lagrangian inside $\Omega_{\widetilde A}$, which is naturally a symplectic manifold. From \cite{KL}, 
$$C\subset  \Omega_{\widetilde A}|_{X}\sqcup\ker( \Omega_{\widetilde A}|_{N}\to \oO_N).$$
Taking a small perturbation $\xi$ is the zero section $\widetilde A$ of $\Omega_{\widetilde A}$ such that $\xi\cap C$ only supports on $M$.  Then Kiem-Li constructed the localized virtual cycle 
$$[N]^{\virt}_{\loc}:=\xi\cap C \in A_0(X).$$
\begin{thm}
We have:
$$\int_{[N]^{\virt}_{\loc}}1=\chi(N, \nu_N).$$
\end{thm}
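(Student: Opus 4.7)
\medskip

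\noindent\textbf{Proof plan.} The strategy is to combine the Kiem--Li cosection localization with Behrend's identification of the obstruction cone via the Lagrangian description, using the symmetric semi-perfect obstruction theory on $N$ established earlier in this section.

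\medskip

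\noindent\textit{Step 1 (Global setup and obstruction cone).} Using the global embedding $N\hookrightarrow \widetilde A$ already fixed (with $\widetilde A$ a vector bundle over a smooth scheme $A$ containing the global immersion $X\hookrightarrow A$), apply Proposition \ref{prop_obstruction_cone} to the symmetric semi-perfect obstruction theory $\phi$ on $N$ to obtain the obstruction cone $C\hookrightarrow \Omega_{\widetilde A}|_N$ fitting into the Cartesian square \eqref{diagram__N_Cone}. By Theorem \ref{thm_Lagrangian_cone}, $C$ is Lagrangian inside the symplectic manifold $\Omega_{\widetilde A}$.

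\medskip

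\noindent\textit{Step 2 (Cosection support).} The $\G_m$-action scales the fibers of $\pi\colon N\to X$, so its fixed locus is precisely the zero section $X\subset N$. The dual of the generating vector field $v_x$ yields a cosection $\sigma\colon \Omega_N\to \oO_N$ whose degeneracy locus is exactly $X$. The Lagrangian property of $C$, combined with $\G_m$-equivariance of the obstruction theory, forces the set-theoretic inclusion
\[
C\;\subset\;\Omega_{\widetilde A}|_X\;\sqcup\;\ker\bigl(\pi^*\sigma:\Omega_{\widetilde A}|_N\to\oO_N\bigr),
\]
so the intersection of $C$ with the zero section $\widetilde A\subset \Omega_{\widetilde A}$ is set-theoretically supported over $X$. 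Consequently the Kiem--Li localized virtual cycle $[N]^{\virt}_{\loc}=\xi\cap C$, defined via a small perturbation $\xi$ of the zero section, lies in $A_0(X)$ and is independent of the choice of $\xi$.

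\medskip

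\noindent\textit{Step 3 (Reduction to the Gysin pullback).} Since $\xi$ is rationally equivalent to the zero section inside $\Omega_{\widetilde A}$, and the intersection with $C$ is proper over $X$ by Step 2, homotopy invariance of refined intersection multiplicity gives
\[
\int_{[N]^{\virt}_{\loc}}1 \;=\; \#\,s^{!}_{\Omega_{\widetilde A}}[C].
\]
This is the analogue, in the cosection-localized setting, of Proposition \ref{prop_virt_obstruction_cone}.

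\medskip

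\noindent\textit{Step 4 (Behrend identification).} By Corollary \ref{cor_cone_Ch} applied to $N$ with its embedding into $\widetilde A$, one has $[C]=L(\mathfrak{c}_N)=\mathrm{Ch}(\nu_N)$ in the Lagrangian cycle group $\mathfrak L_{N}(\Omega_{\widetilde A})$. Chasing the Behrend diagram \eqref{Key_Diagram}, together with Theorem \ref{thm_MacPherson_index} (applicable because the intersection cycle has proper support $X$), produces
\[
\#\,s^{!}_{\Omega_{\widetilde A}}[C] \;=\; \int c_0^{SM}(\nu_N) \;=\; \int c_0^{M}(\mathfrak{c}_N) \;=\; \chi(N,\nu_N),
\]
which is the desired identity. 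The final step then mirrors precisely the proof of Theorem \ref{thm_Behrend_Euler_semiPOT}, but with the cosection playing the role of a properness substitute.

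\medskip

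\noindent\textbf{Main obstacle.} The crux is Step 3: justifying that the Kiem--Li localized intersection agrees with the formally defined degree $\#\,s^{!}_{\Omega_{\widetilde A}}[C]$ despite $N$ being non-proper. One must exploit both the Lagrangian structure of $C$ and the $\G_m$-equivariance of the semi-perfect obstruction theory to guarantee that $C$ does not ``escape to infinity'' of the fiber direction of $N\to X$, so that the Chow-theoretic intersection is genuinely finite and supported on the proper subscheme $X$. A secondary technical point is that the MacPherson index theorem (Theorem \ref{thm_MacPherson_index}) is stated for proper ambient stacks; in Step 4 one should apply it either to a projective compactification of $N$ (such as the $\overline N$ mentioned in the introduction) along which $\nu_N$ extends compatibly, or directly to the pushforward of the relevant constructible function to $X$, using the fact that the Chern--Mather class is well-defined on cycles with compact support.
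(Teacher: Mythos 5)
Your proposal is correct and follows essentially the same route as the paper, whose proof of this theorem simply defers to \cite[\S 4]{JT} with the remark that the essential point is the Lagrangian property of $C\subset\Omega_{\widetilde A}$, which survives in the semi-perfect setting because it is local. The ``main obstacle'' you flag (applying Theorem \ref{thm_MacPherson_index} to the non-proper $N$) is exactly the point the paper handles, in the proof of the parallel $\G_m$-equivariant theorem later in the same section, by invoking the Kashiwara--Schapira global index theorem for the characteristic cycle $C=\mathrm{Ch}(\nu_N)$, which applies since $\xi\cap C$ is compactly supported on the proper locus $X$.
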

\begin{proof}
The proof is nearly the same as in \cite[\S 4]{JT}.   Note that the essential point is that $C\subset \Omega_{\widetilde A}$ is Lagrangian, which is still true for symmetric  semi-perfect obstruction theory $\phi=\{\phi_\alpha, N_\alpha,  E_\alpha, \psi_{\alpha\beta}\}_{\alpha\in \Lambda}$ since it is a local property. 
\end{proof}

\subsubsection{$\G_m$-localized invariants vs Fantechi-Goeschett/Ciocan-Fortanine-Kapranov virtual Euler characteristics}

We also generalize the invariants $(1), (2)$ in \cite{JT} to $N$ admitting a symmetric semi-perfect obstruction theory,  
as studied in \cite{Kiem}. 

Let $(X, E_X^\bullet)$ be a perfect obstruction theory on a scheme $X$.  The virtual Euler number is defined as:
$$e_{\virt}(X)=\int_{[X]^{\virt}}c_{\vd}((E_X^\bullet)^{\vee})$$
where $(E_X^\bullet)^{\vee}$ is taken as the virtual tangent bundle on $X$. We use the signed version:
$$e_{1}(X)=\int_{[X]^{\virt}}c_{\vd}(E_X^\bullet)$$
which is deformation invariant. 

On the other hand, since $\G_m$ acts on $N$ (the abelian cone), one can apply Graber-Pandharipande's virtual localization on the virtual cycle of $N$ (for semi-perfect obstruction theory on $N$, \cite{Kiem} generalizes it to this situation). We have, over the $\G_m$ fixed $X$, 
$$E_\alpha^{\bullet}|_{X}\cong E_X^\bullet\oplus (E_X^\bullet)^{\vee}\otimes \mathbf{t}^{-1}[1]$$
for each $\alpha$, where $\mathbf{t}$ denotes the standard weight one representation of $\G_m$.  The virtual normal bundle is given by the dual of the second summand
$$\mathcal{N}^{\virt}\cong E_X^\bullet\otimes \mathbf{t}[-1].$$
The the rest if the same as in \cite[\S 3.2]{JT}, 
$$e_2(X):=\int_{[X]^{\virt}}\frac{1}{e(\mathcal{N}^{\virt})}=e_1(X). $$
Therefore 
\begin{thm}
The Graber/Pandharipande localized invariants $e_2(X)$ is the same as the signed Euler number $e_1(X)$. 
\end{thm}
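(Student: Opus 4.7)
The plan is to apply $\G_m$-equivariant virtual localization to the symmetric semi-perfect obstruction theory $\phi=\{\phi_\alpha, N_\alpha, E_\alpha,\psi_{\alpha\beta}\}$ on $N$, using the generalization of Graber--Pandharipande localization to the semi-perfect setting carried out in \cite{Kiem}. First I would verify that the scaling $\G_m$-action on the fibres of $\pi\colon N\to X$ lifts to an equivariance of $\phi$: in the local presentation of \S\ref{sec_Virtual_Euler}, $N_\alpha=\Crit(\widetilde s_\alpha)\subset\mathrm{Tot}(E^*)|_A$ where $\widetilde s_\alpha=\sum s_j y_j$ is linear in the fibre coordinates $y_j$ and hence homogeneous of weight one, so the induced perfect obstruction theory $E_\alpha^\bullet$ is tautologically $\G_m$-equivariant, and the gluing data $\psi_{\alpha\beta}$, being determined by $\nu$-equivalences of the local symmetric obstruction theories, is compatible with this equivariance by uniqueness of weight decompositions.

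Second, I would identify the fixed locus $N^{\G_m}=X$ (the zero section) and compute each restriction $E_\alpha^\bullet|_X$. The Hessian of $\widetilde s_\alpha$ along $X$ splits block-diagonally according to the $\G_m$-weights of the coordinates $x_i$ (weight $0$) and $y_j$ (weight $1$), yielding the splitting
$$E_\alpha^\bullet|_X\;\cong\;E_X^\bullet\;\oplus\;(E_X^\bullet)^\vee\otimes\mathbf{t}^{-1}[1],$$
in which the fixed part recovers the original perfect obstruction theory on $X$ and the moving part has dual $\mathcal{N}^{\virt}\cong E_X^\bullet\otimes\mathbf{t}[-1]$. These local splittings agree on overlaps, hence glue to a global $\G_m$-equivariant splitting of $\phi|_X$.

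Third, I would invoke Kiem's virtual localization for symmetric semi-perfect obstruction theories \cite{Kiem} to obtain
$$\int_{[N,\phi]^{\virt}}1\;=\;\int_{[X]^{\virt}}\frac{1}{e\bigl(\mathcal{N}^{\virt}\bigr)}\;=:\;e_2(X),$$
and then perform the Chern-root computation. Choose a two-term resolution $E_X^\bullet=[E^{-1}\to E^0]$ with $\vd=\rk E^0-\rk E^{-1}$. Multiplicativity of the equivariant Euler class gives
$$\frac{1}{e(\mathcal{N}^{\virt})}\;=\;\frac{e(E^0\otimes\mathbf{t})}{e(E^{-1}\otimes\mathbf{t})}\;=\;t^{\vd}\,c(E_X^\bullet)(1/t)$$
as a polynomial in the equivariant parameter $t$. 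Pairing against $[X]^{\virt}$ of dimension $\vd$, only the $t^0$-coefficient survives, yielding $c_{\vd}(E_X^\bullet)$; hence $e_2(X)=\int_{[X]^{\virt}}c_{\vd}(E_X^\bullet)=e_1(X)$.

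The main obstacle is the first step: classical virtual localization is stated for global perfect obstruction theories, and one must check that the chart-by-chart fixed/moving decomposition descends consistently under the $\psi_{\alpha\beta}$ and that the Gysin map $s^{!}_{\Omega}$ used to construct $[N,\phi]^{\virt}$ in Proposition \ref{prop_virt_obstruction_cone} is compatible with the $\G_m$-equivariant refinement. Once this is granted via \cite{Kiem}, the final Chern-root manipulation is purely formal and identical to \cite[\S 3.2]{JT}.
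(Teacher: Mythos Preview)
Your approach is essentially the same as the paper's: both record the $\G_m$-equivariant splitting $E_\alpha^\bullet|_X \cong E_X^\bullet \oplus (E_X^\bullet)^\vee \otimes \mathbf{t}^{-1}[1]$ and the identification $\mathcal{N}^{\virt} \cong E_X^\bullet \otimes \mathbf{t}[-1]$, and then reduce to the Chern-root computation of \cite[\S 3.2]{JT}; you carry that computation out explicitly, while the paper simply cites it.

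One small point of presentation: in your third step you pass through $\int_{[N,\phi]^{\virt}}1$, but $N$ is a non-proper cone over $X$, so this integral is not literally defined. In the paper (as in \cite{JT}) the invariant $e_2(X)$ is \emph{defined} directly as the fixed-locus contribution $\int_{[X]^{\virt}}1/e(\mathcal{N}^{\virt})$; the virtual localization formalism is invoked only to justify that this is the correct localized expression, not to equate it with an honest global integral on $N$. Your steps one and two set up precisely this, and your final Chern-root manipulation then yields $e_2(X)=e_1(X)$ without needing the intermediate equality.
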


\subsection{A compactification of $N$}

We provide a compactification $\overline{N}$ of the abelian cone $N$.

The natural compactification $\overline{N}$ of $N$ is defined as:
\begin{equation}\label{defnBarN}
\overline{N}:=\overline{C}(\Ob_X)=\Proj(\Sym^\bullet(\Ob_X\oplus \oO_X))\stackrel{\overline{\pi}}{\longrightarrow} X.
\end{equation}
Then $\overline{N}$ is a projective cone over $X$ containing $N$ as an open locus. 
Let $D_\infty^{N}:=\Proj(\Sym^{\bullet}\Ob_X)$ be the infinity divisor of $\overline{N}$.

We review the construction of the projective cone $\overline{N}$.  Writing $E_X^\bullet$ as $E^{-1}\to E^0$, we get the exact sequence
$$
E_0\stackrel{\phi}{\longrightarrow} E_1\to\Ob_X\to0.
$$
Consider the following exact sequence 
\begin{equation}\label{exact_resolution1}
E_0\stackrel{(\phi,0)}{\longrightarrow} E_1\oplus\oO_X\to\Ob_X\oplus\oO_X\to0.
\end{equation}
Let $\tau$ be the tautological section of $\pi_{E_1\oplus\oO_X}^*(E^{-1}\oplus\oO_X)$.  From \cite[Lemma 2.1]{JT}, the abelian cone 
\begin{multline}\label{abelian_cone1}
C(\Ob_X\oplus \oO_X) \text{~is cut out of~}
C(E_1\oplus \oO_X)=\mbox{Tot}(E^{-1}\oplus\oO_X) \text{~by the section~} \\
\pi^*_{E_1\oplus\oO_X}\phi^*(\tau) \text{~of~} \pi^*_{E_1\oplus\oO_X} E^0.
\end{multline}

From the construction of the projective bundle (cone),  this tautological section 
$\tau$
of $\pi_{E_1\oplus\oO_X}^*(E^{-1}\oplus\oO_X)$
 induces a homomorphism  from $\oO_{\mathbb{P}(E^{-1}\oplus \oO_X)}(-1)$ to 
$\tau$ on the projective bundle $\mathbb{P}(E^{-1}\oplus\oO_X)$, which we denote it by $\tau(-1)$.  Therefore
\begin{multline}\label{projective_cone1}
\overline{N} \text{~is cut out of~}
\overline{C}(E_1)=\mathbb{P}(E^{-1}\oplus\oO_X) \text{~by the section ~} 
\overline{\pi}^*_{E_1}(\phi,0)^*(\tau(1)) \text{~of~} 
\overline{\pi}_{E_1}^*E^0(1).
\end{multline}

\subsubsection{Local model} 
We work on the local model of $\overline{N}$. 
 We have 
 $$
\big\{T_A|_X\oplus \oO_X\stackrel{(ds,0)}{\longrightarrow}E|_X\oplus \oO_X\big\} \quad\text{is quasi-isomorphic to }\quad
\big\{E_0\oplus\oO_X\to E_1\oplus\O_X\big\}.
$$
Therefore the ideal of $\overline{N}$ in the smooth ambient space $P:=\mathbb{P}(E^*\oplus\oO_A)$ is
\begin{equation}\label{ideal_bar}
\big(\pi^*_{E}s,\,\overline{\pi}^*_{E}(ds,0)^*(\tau(1)) \big).
\end{equation}
Here we recall that 
$$\overline{\pi}^*_{E}: \mathbb{P}(E^*\oplus \oO_A)\to A$$ is the projection of the projective bundle.

We can take $A=\aaa_{\kappa}^n$ and the vector bundle $E=A\times \aaa_{\kappa}^r$ is trivial. We work in local coordinates $x_i$ for $A$ such that $(s_1,\cdots, s_r)$ are functions of $x_i$. Trivializing $E$ (with rank $r$) with a basis of sections $e_j$, we get a dual basis $f_j$ for $E^*$ and coordinates $y_j$ on the fibers of Tot$\;(E^*)$.
From the construction of $P:=\mathbb{P}(E^*\oplus \oO_A)=A\times \mathbb{P}^r$, and let $D_\infty=\mathbb{P}(E^*)=A\times \mathbb{P}^{r-1}$ be the infinity divisor. 
In the local coordinates above, one takes the homogeneous coordinates of of the fibre of $A\times \mathbb{P}^r$ by 
$$[u_1:\cdots : u_r : u_{r+1}].$$ Therefore $D_\infty=\{u_{r+1}=0\}\subset A\times \mathbb{P}^r$.

Recall that the   log cotangent bundle $\Omega_{P}^{\log}$ is defined as the sheaf of differential forms on $P$ with logarithmic poles along $D_\infty$, i.e.,  to be the sheaf of meromorphic $1$-forms on $P$ that are holomorphic away from $D_\infty$ and locally on $A\times U_i$ in coordinates $\{z, z_i=\frac{u_l}{u_i}\}$ along $D_\infty$ can be written as
$$
f\frac{dz}{z}+\sum_{i} f_i dz_i
$$
with all $f, f_i$  holomorphic functions.

Let us look at the meromorphic function $\widetilde s=\sum_{j=1}^r s_ju_j$ on $P$ which  only has linear terms on the homogeneous coordinates $\{u_i\}$ of $\mathbb{P}^r$.
The function $\widetilde s$ extends to $P$ and has only  first order pole along $D_\infty$, since in local coordinates $\widetilde s=\frac{g}{z}$ for some regular function $g$ where $z$ is the normal coordinate of $D_\infty$. 
Therefore $\widetilde s\in \oO_{P}(D_\infty)$.  
Then the differential $d\widetilde s$ has pole along $D_\infty$ and gives a section of the twisted  log cotangent bundle $\Omega_{P}^{\log}(D_\infty)$.

The differential  $d\widetilde s$ also gives the ideal 
$$(s_1,\cdots, s_r; \sum_iu_ids_i)$$
in coordinates $\{x_i\}$ of $A$ and homogeneous coordinates $\{u_i\}$ of $\mathbb{P}^r$
which is (\ref{ideal_bar}) for the compactification $\overline{N}$.

\subsubsection{Calculation on each affine charts}

We write down the ideal  (\ref{ideal_bar}) on each affine charts of $P$. 

$P$ is covered by $r+1$ affine open subset $A\times U_i$, where $U_i=\{u_i\neq 0\}\subset \mathbb{P}^r$. 
On each $U_i$, the coordinates are given by:
$$\{y_1^{i}:=\frac{u_1}{u_{i}}, \cdots, \hat{1}, \cdots,  y_r^i=\frac{u_r}{u_i},  y_i^{i}=\frac{u_r+1}{u_i}\}$$  
if $i\neq r+1$; and 
$$\{y_1^{r+1}:=\frac{u_1}{u_{r+1}}, \cdots,  y_r^{r+1}=\frac{u_r}{u_{r+1}}\}$$  
if $i=r+1$.

The scheme Tot$\;(E^*)=A\times U_{r+1}$ and  the fibre coordinates $\{y_1,\cdots, y_r\}$ of Tot$(E^*)$ are given by 
$\{\frac{u_1}{u_{r+1}}, \cdots, \frac{u_r}{u_{r+1}}\}$.  The regular function $\widetilde s=\sum_{i=1}^{r}y_i s_i$ and the critical locus of  $\widetilde s$ is $N$.

Let us look at the open affine $A\times U_i$ for $i\neq r+1$.  Let $z$ (think of $z$ as  $\frac{u_{r+1}}{u_i}$) be the local coordinate of $P$ in the normal direction of $D_\infty$ such that the zero locus of $z$ gives $D_\infty$.  The function
$$\widetilde s|_{A\times U_i}=s_i\frac{1}{z}+\sum_{\substack{l=1,\\
l\neq i}}^r s_l y_l^i\cdot \frac{1}{z}=\frac{1}{z}\cdot g$$
where $g= s_i+\sum_{\substack{l=1,\\
l\neq i}}^r s_l y_l^i$ is a regular function on $A\times U_i$.  Then 
$d\widetilde s=\frac{1}{z}\cdot dg-\frac{1}{z^2}\cdot g dz$. Thus 
$$d\widetilde s=\frac{1}{z}\left(ds_i+\sum_{\substack{l=1,\\
l\neq i}}^rds_l\cdot y_l^i + \sum_{\substack{l=1,\\
l\neq i}}^r s_l\cdot dy_l^i\right)-\frac{1}{z^2}\left( \sum_{\substack{l=1,\\
l\neq i}}^r s_l\cdot y_l^i +s_i\right)dz.$$

We use $\left(ds_i+ \sum_{\substack{l=1,\\
l\neq i}}^r ds_l\cdot y_l^i \right)$ to represent the ideal generated by 
$$\left( \frac{\partial s_i}{\partial x_1}+ \sum_{\substack{l=1,\\
l\neq i}}^r \frac{\partial s_l}{\partial x_1}\cdot y_l^i, \cdots,  \frac{\partial s_i}{\partial x_r}+ \sum_{\substack{l=1,\\
l\neq i}}^r \frac{\partial s_l}{\partial x_r}\cdot y_l^i\right).$$

\begin{lem}\label{lem_ideal_affine_chart}
The affine chart $\overline{N}\cap (A\times U_i)$ is given by the zero locus of the ideal 
\begin{equation}\label{ideal_affine_chart}
\left(s_1,\cdots, s_i, \cdots, s_r; \left(ds_i+\sum_{\substack{l=1,\\
l\neq i}}^r ds_l\cdot y_l^i \right)\right).
\end{equation}
\end{lem}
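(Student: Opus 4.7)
The plan is to unwind the definition of $\overline{N}$ given by the ideal \eqref{ideal_bar} on the affine chart $A\times U_i$, using the explicit trivializations set up in the preceding paragraphs. The argument has three steps and is essentially a routine local coordinate computation; there is no serious obstacle, only bookkeeping with the twist $\mathcal O(1)$ on $\mathbb P^r$.

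First I would fix a trivialization of $\mathcal O_{\mathbb P^r}(1)$ on $U_i$. Since $U_i = \{u_i \neq 0\}$, the section $u_i$ of $\mathcal O(1)$ is everywhere non-vanishing on $U_i$ and gives the canonical trivialization. With this choice, the tautological inclusion $\mathcal O(-1)\hookrightarrow \overline{\pi}^*(E^*\oplus\mathcal O_A)$ is represented on $U_i$, in the basis $(f_1,\ldots,f_r,f_{r+1})$ of $E^*\oplus\mathcal O_A$, by the column vector whose entries are the affine coordinates $(y_1^i,\ldots,1,\ldots,y_r^i,z)$, where $1$ sits in slot $i$ and $z=y_i^i=u_{r+1}/u_i$. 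Hence the twisted tautological section $\tau(1)$, viewed after trivialization as a section of $\overline{\pi}^*(E^*\oplus\mathcal O_A)|_{U_i}$, is precisely this column vector.

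Second I would compute the image $\overline{\pi}^*(ds,0)^*\bigl(\tau(1)\bigr)$ on $U_i$. The dual of the local model $(ds,0)\colon T_A|_X\oplus\mathcal O_X\to E|_X\oplus\mathcal O_X$ sends $f_j\mapsto ds_j$ for $j\leq r$ and annihilates $f_{r+1}$. Applying this pointwise to the column vector of the previous step gives
\[
\overline{\pi}^*(ds,0)^*\bigl(\tau(1)\bigr)\bigm|_{U_i} \;=\; \sum_{j=1}^{r} y_j^i\,ds_j \;=\; ds_i+\sum_{\substack{l=1\\ l\neq i}}^{r} y_l^i\,ds_l,
\]
using $y_i^i=1$. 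By the convention spelled out just before the lemma, the ideal of $\mathcal O_{A\times U_i}$ generated by the coefficients of this one-form in the basis $\{dx_k\}_{k=1}^{n}$ of $\Omega_A$ is exactly the second generating set in \eqref{ideal_affine_chart}.

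Third, the component $\pi_E^*s$ of the ideal in \eqref{ideal_bar} pulls back along the projection $A\times U_i\to A$ to the tuple $(s_1,\ldots,s_r)$ in the chosen trivialization of $E$, which gives the first generating set of \eqref{ideal_affine_chart}. Combining the two contributions and quoting \eqref{ideal_bar} proves that $\overline N\cap(A\times U_i)$ is the zero locus of the stated ideal. The only place to be careful is the bookkeeping of the twist by $\mathcal O(1)$ when passing from $\tau$ on $\mathrm{Tot}(E^*\oplus\mathcal O_A)$ to $\tau(1)$ on the projectivization; once the trivialization by $u_i$ is fixed, this is entirely mechanical.
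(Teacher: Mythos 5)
Your proof is correct and follows essentially the same route as the paper's: both arguments trivialize the $\mathcal O(1)$-twist on $U_i$ by $u_i$ and read off the generators of \eqref{ideal_bar} in the affine coordinates (the paper does this by converting the homogeneous expression $\sum_l u_l\,ds_l$ from the $U_{r+1}$-chart, you do it by writing $\tau(1)$ directly as the column vector $(y_1^i,\dots,1,\dots,y_r^i,z)$; the two computations are the same). One small notational slip: you set $y_i^i=z$ but then invoke ``$y_i^i=1$'' in the displayed sum --- what you mean is that the $i$-th slot of the column vector is $u_i/u_i=1$, which is what produces the untwisted term $ds_i$.
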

\begin{proof}
First in this local case the ideal (\ref{ideal_bar}) is given by: 
\begin{equation}\label{ideal_lem}
(s_1,\cdots, s_r; \sum_iu_ids_i).
\end{equation}
Here $\left(\sum_iu_ids_i \right)$ means the ideal generated by:
$$\left(\sum_iu_i\frac{\partial s_i}{\partial x_1}, \cdots, \sum_iu_i\frac{\partial s_i}{\partial x_r}\right).$$

To write down the ideal (\ref{ideal_lem}) in 
 this affine chart $\overline{N}\cap (A\times U_i)$, we first have 
 $$\sum_i u_i ds_i= \sum_i \frac{u_i}{u_{r+1}}ds_i$$ in the affine chart $\overline{N}\cap (A\times U_{r+1})$. From the relation between the coordinates on $U_i$ and $U_{r+1}$,
 $$\frac{u_i}{u_{r+1}}=\frac{1}{\frac{u_{r+1}}{u_i}}=\frac{1}{z};$$
 and for $l=1,\cdots, r$ but $l\neq i$, 
 $$ \frac{u_l}{u_{r+1}}=\frac{u_l}{u_i}\cdot \frac{u_{i}}{u_{r+1}}=\frac{u_l}{u_i}\cdot\frac{1}{z},$$
 then in the open affine chart  $\overline{N}\cap A\times U_i$, 
$\sum_i u_i ds_i$ can be written as 
$$ds_i+ \sum_{\substack{l=1\\
i\neq i}}^{r}\frac{u_l}{u_{i}}\cdot \frac{1}{z}ds_l.$$
Since in this affine chart 
$u_i\neq 0$, the ideal (\ref{ideal_lem}) in this open affine chart is given by 
$(s_1,\cdots, s_i, \cdots, s_r)$ and 
$$ds_i+ \sum_{\substack{l=1\\
i\neq i}}^{r}\frac{u_l}{u_{i}}\cdot ds_l=ds_i+ \sum_{\substack{l=1\\
i\neq i}}^{r}y_l^i\cdot ds_l.$$
\end{proof}

The locus  $\overline{N}\cap (A\times U_i)$ determined by the ideal (\ref{ideal_affine_chart}) is not given by the critical locus of $g$ since the ideal $dg$ is 
$$\left(s_1,\cdots, \widehat{s}_i, \cdots, s_r; \left(ds_i+\sum_{\substack{l=1,\\
l\neq i}}^r ds_l\cdot y_l^i \right)\right)$$
where $\widehat{s}_i$ means this $s_i$ is not included.  If the ideal $(s_i)\subseteq (ds_i)$, then the ideal (\ref{ideal_affine_chart}) is  given by the  ideal $dg$.

\subsubsection{$d$-critical scheme structure}
We aim to show that $\overline{N}$ is also a $d$-critical scheme.  We first consider $A\times U_{r+1}$. Then in this case let $y_l^{r+1}:=\frac{u_l}{u_{r+1}}$, then the function 
$\widetilde s=\sum_{l=1}^{r}s_l y_l$ and we go back to the case of  abelian cone $N$, which is the critical locus of 
$\widetilde s: \mbox{Tot}(E^*)A\times U_{r+1}\to \aaa_{\kappa}^1$.  Hence 
$$(\Crit(\widetilde s), A\times U_{r+1}, \widetilde s, i_{r+1})$$
is a critical chart, where $i_{r+1}: \Crit(\widetilde s)\hookrightarrow A\times U_{r+1}$ is the closed embedding. 

Let us consider the affine scheme $A\times U_i (i\neq r+1)$. From Lemma \ref{lem_ideal_affine_chart}, 
$$\widetilde s|_{A\times U_i}=\frac{s_i}{z}+\sum_{\substack{l=1,\\
 l\neq i}}^r s_l\frac{u_l}{u_i}\frac{1}{z}:=\frac{1}{z}\cdot g$$
 where $g=s_i+\sum_{\substack{l=1,\\
 l\neq i}}^r s_l y_l^i$.   
 Suppose that $(s_i)\subset (ds_i)$, 
  we have critical chart 
 $$(\Crit(g), A\times U_i, g, i_{i})$$
 where $i_i: \Crit(g)\hookrightarrow A\times U_i$ is the closed embedding.  Since the differential 
 $$dg=ds_i+ \sum_{\substack{l=1,\\
 l\neq i}}^r (ds_l y_l^i+ s_l d y_l^i),$$ one can write down the ideal of $\Crit(g)$:
 \begin{equation}\label{ideal_Ui}
 \left( \{s_l\}_{l=1, l\neq i}^{r}; ds_1\cdot y_1^i, \cdots, ds_i, \cdots,  ds_r\cdot y_r^i\right).
 \end{equation}

Next we consider another affine scheme $A\times U_j (j\neq i, r+1)$.  Let 
$$\{y_1^{j}:=\frac{u_1}{u_{j}}, \cdots, \hat{1}, \cdots,  y_r^j=\frac{u_r}{u_j},  y_j^{j}=\frac{u_r+1}{u_j}\}$$  
be the local coordinate of $U_j$.  Let $z=\frac{u_{r+1}}{u_j}$.  Then we have 
$$\widetilde s|_{A\times U_j}=\frac{s_j}{z}+\sum_{\substack{l=1,\\
 l\neq j}}^r s_l\frac{u_l}{u_j}\frac{1}{z}:=\frac{1}{z}\cdot g^\prime$$
 where $g^\prime=s_j+\sum_{\substack{l=1,\\
 l\neq j}}^r s_l y_l^j$.   Therefore we have critical chart 
 $$(\Crit(g^\prime), A\times U_j, g, i_{j})$$
 where $i_j: \Crit(g^\prime)\hookrightarrow A\times U_j$ is the closed embedding.  The differential 
 $$dg^\prime=ds_j+ \sum_{\substack{l=1,\\
 l\neq j}}^r (ds_l y_l^j+ s_l d y_l^j),$$ one can write down the ideal of $\Crit(g^\prime)$:
 \begin{equation}\label{ideal_Uj}
 \left( \{s_l\}_{l=1, l\neq j}^{r}; ds_1\cdot y_1^j, \cdots, ds_j, \cdots, ds_r\cdot y_r^j\right).
 \end{equation}
 
 We check that the critical locus of $\widetilde s$ and $g$ are the same on the intersection $A\times U_{r+1}\cap U_i$.  Let us write down for $\widetilde s: A\times U_{r+1}\to \aaa_{\kappa}^1$, 
 $$d\widetilde s=\sum_{l=1}^r (ds_l y_l^{r+1}+ s_l d y_l^{r+1}).$$
 Then the ideal of $\Crit(\widetilde s)$ is given by 
 $$(s_1, \cdots, s_r; ds_1 \cdot y_1^{r+1}, \cdots, ds_r \cdot y_r^{r+1}).$$
 Comparing with the ideal (\ref{ideal_Ui}),  note that $y_i^{r+1}=\frac{1}{y_i^i}\neq 0$, and the ideal 
 $(ds_i)\subset (s_i)$,  therefore they are the same.  The comparison of  the ideal (\ref{ideal_Ui}) and  the ideal (\ref{ideal_Uj}) is similar. 

Let $\sS_{\overline{N}}$ be the unique coherent sheaf on $\overline{N}$ constructed in Theorem \ref{property:local:data}. Then  for the critical charts above, we have the following exact sequences
of sheaves of $\kappa$-vector spaces:
$$0\rightarrow\sS_{\overline{N}}|_{\Crit{\widetilde s}}\stackrel{\iota}{\longrightarrow}
\frac{i_{r+1}^{-1}(\oO_{A\times U_{r+1}})}{I^{2}_{\Crit(\widetilde s),A\times U_{r+1}}}\stackrel{d}{\longrightarrow}
\frac{i_{r+1}^{-1}(T^*(A\times U_{r+1}))}{I_{\Crit(\widetilde s),A\times U_{r+1}}\cdot i_{r+1}^{-1}(T^*(A\times U_{r+1}))}$$
where $d$ maps $f+I_{\Crit(\widetilde s),A\times U_{r+1}}^2$ to 
$df+I_{\Crit(\widetilde s),A\times U_{r+1}}\cdot i_{r+1}^{-1}(T^*(A\times U_{r+1}))$;
and 
$$0\rightarrow\sS_{\overline{N}}|_{\Crit(g)}\stackrel{\iota}{\longrightarrow}
\frac{i_{i}^{-1}(\oO_{A\times U_{i}})}{I^{2}_{\Crit(g),A\times U_{i}}}\stackrel{d}{\longrightarrow}
\frac{i_i^{-1}(T^*(A\times U_{i}))}{I_{\Crit(g),A\times U_{i}}\cdot i_i^{-1}(T^*(A\times U_{i}))}$$
where $d$ maps $f+I_{\Crit(g),A\times U_{i}}^2$ to 
$df+I_{\Crit(g),A\times U_{i}}\cdot i_i^{-1}(T^*(A\times U_{i}))$.
Then we can take a section $s\in H^0(\sS^0_{\overline{N}})$ such that: 
for any $x\in \Crit(\widetilde s)$ in the open affine $A\times U_{r+1}$, 
$$\iota_{\Crit(\widetilde s), A\times U_{r+1}}(s|_{\Crit(\widetilde s)})=i_{r+1}^{-1}(\widetilde s)+I^2_{\Crit(g),A\times U_{i}};$$ 
And or any $x\in \Crit(g)$ in the open affine $A\times U_{i}$ for $i\neq r+1$, 
$$\iota_{\Crit(g), A\times U_{i}}(s|_{\Crit(g)})=i_{i}^{-1}(g)+I^2_{\Crit(g),A\times U_{i}}.$$ 
These local sections $s|_{\Crit(\widetilde s)}$ and $s|_{\Crit(g)}$ glue since $\widetilde s$ and $g$ are the same at the intersection $A\times (U_{r+1}\cap U_i)$. Thus we show that: 

\begin{thm}\label{thm_d_critical_scheme_BarN}
If on each affine chart, the sections $s_i$ satisfies the condition 
$(s_i)\subset (ds_i)$, then 
the pair $(\overline{N}, s)$ is a $d$-critical scheme. 
\end{thm}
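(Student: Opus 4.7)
The plan is to equip $\overline{N}$ with an atlas of critical charts indexed by the standard affine cover of the ambient smooth projective bundle $P = \mathbb{P}(E^* \oplus \oO_A)$ on the local presentation $X \hookrightarrow A$, and then to invoke Theorem \ref{property:local:data} to glue the resulting local data into a global section of $\sS^0_{\overline{N}}$. Write $P = \bigcup_{i=1}^{r+1} A \times U_i$ with $U_i = \{u_i \neq 0\} \subset \mathbb{P}^r$. On $A \times U_{r+1}$ the function $\widetilde{s} = \sum_{l \le r} s_l y_l$ is regular and $\Crit(\widetilde{s}) = N$ by (\ref{critical_locus}), so the critical chart $(N, A \times U_{r+1}, \widetilde{s}, i_{r+1})$ is already in hand. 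On each chart $A \times U_i$ with $i \neq r+1$, the proposed chart is $(\Crit(g_i), A \times U_i, g_i, i_i)$, where $g_i := s_i + \sum_{l \le r,\, l \neq i} s_l y_l^i$ is the regular function obtained from $\widetilde{s}$ by clearing its first-order pole along $D_\infty$.

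The principal technical obstacle is to verify that $\Crit(g_i)$ coincides scheme-theoretically with $\overline{N} \cap (A \times U_i)$. By Lemma \ref{lem_ideal_affine_chart}, the ideal of $\overline{N}$ on this chart is
$$I_{\overline{N}} = \Bigl(s_1, \ldots, s_r;\ ds_i + \sum_{l \le r,\, l \neq i} ds_l \cdot y_l^i\Bigr),$$
while the Jacobian ideal of $g_i$ computes to
$$J_{g_i} = \Bigl(s_l : l \le r,\, l \neq i;\ ds_i + \sum_{l \le r,\, l \neq i} ds_l \cdot y_l^i\Bigr).$$
Thus $J_{g_i} \subseteq I_{\overline{N}}$ automatically, and equality reduces to the single membership $s_i \in J_{g_i}$. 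The hypothesis $(s_i) \subseteq (ds_i)$ is engineered precisely for this step: writing $s_i = \sum_k \beta_k \partial_{x_k} s_i$ in $\oO_A$, one combines the identity $\partial_{x_k} s_i = \partial_{x_k} g_i - \sum_{l \neq i} (\partial_{x_k} s_l) y_l^i$ with the membership $\partial_{y_l^i} g_i = s_l \in J_{g_i}$ for $l \neq i$ to rewrite $s_i$ as an element of $J_{g_i}$. In concrete examples this manipulation genuinely uses all three families of generators of $J_{g_i}$ together with a Koszul-type cancellation of the residual $y_l^i$-terms against $\partial_{y_l^i}g_i$; this is by far the delicate ingredient of the proof.

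Once the critical charts are in place, I would apply Theorem \ref{property:local:data} to the scheme $\overline{N}$ to obtain the unique sheaf $\sS_{\overline{N}} = \sS^0_{\overline{N}} \oplus \kappa_{\overline{N}}$, and declare the local representative of $s$ by $\iota(s|_N) = \widetilde{s} + I^2$ on $A \times U_{r+1}$ and $\iota(s|_{\Crit(g_i)}) = g_i + I^2$ on each $A \times U_i$, $i \neq r+1$. To patch these into a global $s \in H^0(\sS^0_{\overline{N}})$, I would invoke Joyce's common refinement property for critical charts stated just after Definition \ref{algebraic:d:critical:locus}: any two critical charts around a common point admit a common enclosing chart, and the naturality square (\ref{diagram:Algebraic:Dcritical:Locus}) of $\sS$ with respect to chart embeddings forces the two induced classes to agree. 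On the overlap $A \times (U_{r+1} \cap U_i)$ the relation $\widetilde{s} = g_i / z$, where $z = u_{r+1}/u_i$ is a unit on the overlap, together with the fact that every $s_l$ vanishes on the common critical scheme (so that $\widetilde{s}$ and $g_i$ both restrict to zero there), ensures that the discrepancy lies in the kernel of $\sS_{\overline{N}} \to \oO_{\overline{N}^{\red}}$ and hence has trivial class in $\sS^0_{\overline{N}}$. An entirely analogous check on the overlaps $A \times (U_i \cap U_j)$ with $i, j \neq r+1$ is routine. This produces a global section $s$ and completes the verification that $(\overline{N}, s)$ is a $d$-critical scheme.
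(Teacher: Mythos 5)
Your proposal follows the paper's proof essentially step for step: the same affine cover of $P=\mathbb{P}(E^*\oplus\oO_A)$, the same critical charts $(\Crit(\widetilde s), A\times U_{r+1},\widetilde s, i_{r+1})$ and $(\Crit(g_i), A\times U_i, g_i, i_i)$ with $g_i=z\widetilde s$, the same appeal to Lemma \ref{lem_ideal_affine_chart} together with the hypothesis $(s_i)\subset(ds_i)$ to identify $\Crit(g_i)$ with $\overline{N}\cap(A\times U_i)$ scheme-theoretically, and the same gluing of the local sections via Theorem \ref{property:local:data}. The only differences are cosmetic (the paper verifies the overlap agreement by comparing the Jacobian ideals on $A\times(U_{r+1}\cap U_i)$ and on $A\times(U_i\cap U_j)$ directly, while you phrase it through the naturality square and the unit $z$), so this is essentially the paper's argument.
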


Then from Theorem \ref{thm_d_critical_scheme_symmetric_semi_obstruction_theory}, $\overline{N}$ admits a symmetric semi-perfect obstruction theory $\phi$, therefore a virtual fundamental cycle $[\overline{N}, \phi]^{\virt}$
of degree zero. The invariant is 
\begin{equation}\label{invaraint_compactification_semi-SPOT}
\int_{[\overline{N},\phi]^{\virt}}1.
\end{equation}

\begin{example}\label{example_d_critical_scheme_cone_2}
We change notation here and let $N=\spec\kappa[x, y]/(x^2, xy)$, and $N$ is the critical locus of $f=x^2y$.
Also $N$ is the abelian cone over $X=\spec \kappa[x]/(x^2)$. 
The scheme $\overline{N}=\proj R[y_0:y_1]/(x^2, xy_0)$ in Example \ref{example_d_critical_scheme_cone}
is the compactification of $N$, and is also a $d$-critical scheme. 

We calculate the invariant.  There is a $\G_m$ action on $\overline{N}$ induced from the $\G_m$-action on $\aaa_{\kappa}^1\times \mathbb{P}^1$ where the action on $\aaa_{\kappa}^1$ trivial. 
There are two torus fixed points $0$ and $\infty$, where $0$ is a fat point with multiplicity $2$, and $\infty$ is a smooth point.  One can calculate the Behrend function on them.  Example 3.1 in \cite{JT} calculated $\nu_{\overline{N}}(0)=1$, since the Behrend function depends only on its local neighborhood hence it is the same as the Behrend function of $\nu_N$ at $0$.  Since $\infty$ is a smooth point, $\nu_{\overline{N}}(\infty)=1$.  
So $\chi(\overline{N}, \nu_{\overline{N}})=2=\int_{[\overline{N},\phi]^{\virt}}1$.

The compactification $\overline{N}$ can also be taken as the non-reduced projective subscheme in 
$\mathbb{P}^2=\proj(\kappa[x:y_1:y_2])$ by the homogeneous ideal
$$(x^2, xy_1).$$
Then we can take a deformation family of $\overline{N}$ by considering the subschemes given by the ideals 
$$(x^2-t^2y_2^2, xy_1).$$
When $t=0$, this is the same as $\overline{N}$. 
When $t\neq 0$, this gives the subscheme determined by the ideal 
$(x^2-t^2y_2^2)$ in $\mathbb{P}^1=\proj(\kappa[x:0:y_2])$, hence it is the subscheme containing two smooth points.  This shows that the invariant $\int_{[\overline{N},\phi]^{\virt}}1$  is deformation invariant. 
\end{example}

\subsection{Application to the $\G_m$-equivariant symmetric semi-perfect obstruction theory}

We generalize the result in \cite{Jiang2} to symmetric  semi-perfect obstruction theory. We first recall the $\G_m$-equivariant semi-perfect obstruction theory.  

\begin{defn}(\cite{Kiem})
A $\G_m$-equivariant symmetric semi-perfect obstruction theory on $X$ consists of the following:
\begin{enumerate}
\item a $\G_m$-equivariant \'etale open cover 
$$\{X_\alpha\to X\}$$
of $X$;
\item the symmetric obstruction theory 
$$\phi_\alpha: E_\alpha^\bullet\to L_{X_\alpha}^\bullet$$
is $\G_m$-equivariant in the equivariant derived category $D^b(X_\alpha)$ of coherent sheaves. 
\end{enumerate}
\end{defn}

\begin{rmk}
Note that in \cite{BF2}, a $\G_m$-symmetric equivariant  obstruction theory is given by a $\G_m$-equivariant morphism 
$$\phi_\alpha: E_\alpha^\bullet\to L_{X_\alpha}^\bullet$$
such that there exists a $\G_m$-equivariant bilinear form 
$$\Theta: E^\bullet_\alpha\stackrel{\sim}{\longrightarrow} E_\alpha^{\bullet\vee}[-1]$$
in $D^b(X_\alpha)$ such that $\Theta$ is an isomorphism satisfying $\Theta^\vee[1]=\Theta$.  A $\G_m$-symmetric equivariant obstruction theory requires more than just a $\G_m$-equivariant obstruction theory, see 
\cite[\S 3.4]{BF2}. 
\end{rmk}

The main result in \cite[Theorem 5.20]{Jiang2} does not require that $X$ admits a ``symmetric" equivariant obstruction theory, but rather a $\G_m$-equivariant perfect obstruction theory.  First we have the following result, which is the same as Theorem 5.8 in \cite{Jiang2}. 
\begin{prop}\label{prop_fixed_locus_Behrend_function}
Let $X$ be a scheme or a Deligne-Mumford stack which admits a symmetric semi-perfect obstruction theory. Assume that there exists a $\G_m$-action on $X$ with proper fixed locus $F\subseteq X$ such that 
$$\{E_\alpha^\bullet\to L_{X_\alpha}^\bullet\}_{\alpha\in\Lambda}$$
is a $\G_m$-equivariant obstruction theory.  Then 
$$\chi(X, \nu_X)=\chi(F, \nu_{X}|_{F}).$$
Moreover, if $X$ is proper, 
$$\int_{[X, \phi]^{\virt}}1=\chi(F, \nu_{X}|_{F}).$$
\end{prop}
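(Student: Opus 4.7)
The plan is to reduce the statement to the same localization argument for the Behrend weighted Euler characteristic that was used in \cite[Theorem 5.20]{Jiang2}, and then feed the result into Behrend's theorem (Theorem \ref{thm_Behrend_Euler_semiPOT}) for the virtual count. The crucial point is that, unlike the virtual cycle, the Behrend function $\nu_X$ is constructed from $\mathfrak{c}_X$ purely in terms of the scheme structure of $X$ (Definition \ref{defn_Behrend_function}); it does not depend on a choice of obstruction theory. Consequently, any automorphism of $X$ preserves $\nu_X$, and in particular $\nu_X$ is $\G_m$-invariant as a constructible function once a $\G_m$-action on $X$ is given. The hypothesis that the semi-perfect obstruction theory is equivariant will not be used in the first identity; it only serves to ensure that the fixed locus $F$ inherits the obstruction-theoretic data of interest downstream.

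First I would prove the equality $\chi(X,\nu_X)=\chi(F,\nu_X|_F)$. Write
\[
\chi(X,\nu_X)=\chi(F,\nu_X|_F)+\chi(X\setminus F,\nu_X|_{X\setminus F}),
\]
so the claim reduces to showing the second summand vanishes. Since $\nu_X$ is $\G_m$-invariant and takes finitely many values, decompose $X\setminus F=\bigsqcup_i (\nu_X^{-1}(i)\setminus F)$; each piece is a $\G_m$-invariant constructible subset on which $\G_m$ acts without fixed points. Stratifying each piece by orbit type, every stratum fibres over its quotient with fibres isomorphic to $\G_m$ or to $\G_m/\mu_n$, both of which have topological Euler characteristic $0$. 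Multiplicativity of Euler characteristic in constructible fibrations then gives $\chi(\nu_X^{-1}(i)\setminus F)=0$ for each $i$, hence $\chi(X\setminus F,\nu_X|_{X\setminus F})=\sum_i i\cdot 0=0$. This is exactly the argument in \cite[Theorem 5.8]{Jiang2}, and because it uses only the $\G_m$-equivariance of $\nu_X$ (automatic) and not of the obstruction theory, the same proof applies verbatim in the semi-perfect setting.

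For the second identity, assume $X$ is proper. In particular $X$ is a proper Deligne--Mumford stack carrying the symmetric semi-perfect obstruction theory $\phi$, so Theorem \ref{thm_Behrend_Euler_semiPOT} applies and gives
\[
\int_{[X,\phi]^{\virt}}1=\chi(X,\nu_X).
\]
Combining this with the first identity yields
\[
\int_{[X,\phi]^{\virt}}1=\chi(F,\nu_X|_F),
\]
as desired.

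The only genuine obstacle is the orbit-type vanishing of $\chi$ in the middle step; everything else is a direct appeal to already-established results. In the semi-perfect setting this vanishing goes through unchanged because it is a statement about the underlying $\G_m$-scheme $X$ and the intrinsically defined constructible function $\nu_X$, not about the local charts $\phi_\alpha$. The equivariance hypothesis on $\phi$ is only needed if one wishes, in later applications, to identify $\nu_X|_F$ with an obstruction-theoretic Behrend function on $F$; for the present statement, invariance of $\nu_X$ under $\G_m$ is automatic.
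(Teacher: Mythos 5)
Your proposal is correct and follows essentially the same route as the paper: the paper's proof of the first identity is precisely the observation that $\nu_X$ is constant on nontrivial $\G_m$-orbits (which you justify in more detail via the intrinsic definition of $\nu_X$ and the vanishing of $\chi$ on fixed-point-free $\G_m$-strata, as in \cite[Theorem 5.8]{Jiang2}), and the second identity is obtained exactly as you do, by combining this with Theorem \ref{thm_Behrend_Euler_semiPOT}.
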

\begin{proof}
The first result is from the fact that the Behrend function $\nu_X$ is constant on the nontrivial $\G_m$-orbits. 
If $X$ is proper, the last result is from Theorem \ref{thm_Behrend_Euler_semiPOT}. 
\end{proof}

\subsubsection*{Cosection localization}

It is important to let $X$ non-proper. Still the $\G_m$-action on $X$ defines a cosection
$$\sigma: \Omega_X\to \oO_X$$
by taking the dual of the canonical vector field $v_x\mapsto \frac{d}{d\lambda}\mu(\lambda\cdot x)|_{\lambda=1}$ on $X$ by the $\G_m$-action. The degenerate locus of $\sigma$ is $D(\sigma)=F$.  We assume that $X$ is quasi-projective if it is a Deligne-Mumford stack, therefore there exists a closed immersion 
$$X\hookrightarrow M$$
into a smooth higher dimensional smooth Deligne-Mumford stack $M$.  Hence we have the following cartesian diagram:
\[
\xymatrix{
C\ar@{^{(}->}[r]\ar[d]& \Omega_{M}|_{X}\ar[d]\\
cv\ar@{^{(}->}[r]& \Omega_X
}
\]
similar to (\ref{diagram__N_Cone}), where $C\subset \Omega_M$ is the unique Lagrangian cone in $\Omega_M$ making the diagram commute.  Also \cite{KL} proves that 
$$C\subset  \Omega_{M}|_{F}\sqcup\ker( \Omega_{M}|_{X\setminus F}\to \oO_X).$$
Then Kiem-Li define the cosection localized virtual cycle by applying the localized Gysin map in \cite[\S 3.3]{KL}. We use the analytic method in the  Appendix of \cite{KL}, and 
take a small perturbation $\xi$ is the zero section $M$ of $\Omega_{M}$ such that $\xi\cap C$ only supports on $F$.  Then  
$$[X]^{\virt}_{\loc}:=\xi\cap C \in A_0(F).$$
\begin{thm}
Let $F$ be proper. 
We have:
$$\int_{[X]^{\virt}_{\loc}}1=\chi(F, \nu_X|_{F}).$$
\end{thm}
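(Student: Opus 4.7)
The plan mirrors the proof of the preceding theorem $\int_{[N]^{\virt}_{\loc}}1 = \chi(N, \nu_N)$ for the dual obstruction sheaf, which in turn follows the argument of \cite[\S 4]{JT}. The essential input is the Lagrangian property of the obstruction cone inside $\Omega_M$, together with Behrend's diagram (\ref{Key_Diagram}), and this input is entirely local on $X$, so it carries over to the semi-perfect setting.

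First I would record the two geometric facts that drive the argument. By Theorem \ref{thm_Lagrangian_cone} the obstruction cone $C$ constructed in Proposition \ref{prop_obstruction_cone} sits inside $\Omega_M|_X$ as a Lagrangian subscheme of the symplectic manifold $\Omega_M$. The $\G_m$-equivariance of the symmetric semi-perfect obstruction theory $\phi$ ensures that the cosection $\sigma\colon \Omega_X \to \oO_X$, obtained by dualising the canonical vector field of the action, is globally well-defined with degeneracy locus $D(\sigma) = F$. The cosection support lemma of Kiem-Li then gives
\[
C \;\subset\; \Omega_M|_F \;\sqcup\; \ker\bigl(\Omega_M|_{X\setminus F} \to \oO_{X\setminus F}\bigr),
\]
which is the inclusion that drives the localization.

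Next, I would apply the analytic perturbation construction in the Appendix of \cite{KL} to realise $[X]^{\virt}_{\loc} = \xi \cap C$ as a cycle in $A_0(F)$: a generic small section $\xi$ of $\Omega_M \to M$ approximating the zero section meets $C$ only over the proper locus $F$, and the resulting zero-dimensional intersection number equals the Lagrangian intersection degree $s^!_{\Omega_M}[C]$ appearing in Behrend's diagram (\ref{Key_Diagram}). Corollary \ref{cor_cone_Ch} identifies $[C] = \mathrm{Ch}(\nu_X)$, so the commutativity of (\ref{Key_Diagram}) combined with Theorem \ref{thm_MacPherson_index} gives
\[
\int_{[X]^{\virt}_{\loc}} 1 \;=\; s^!_{\Omega_M}[C] \;=\; \int_X c^{SM}_0(\nu_X) \;=\; \chi(X, \nu_X),
\]
and Proposition \ref{prop_fixed_locus_Behrend_function} then supplies $\chi(X, \nu_X) = \chi(F, \nu_X|_F)$, finishing the proof.

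The main obstacle is justifying the identification $\deg(\xi \cap C) = s^!_{\Omega_M}[C]$ when $X$ is non-proper. In the perfect obstruction case this is part of the standard Kiem-Li formalism, but in the semi-perfect setting one has to verify that the global cone $C$ of Proposition \ref{prop_obstruction_cone} is $\G_m$-invariant — which follows from the $\G_m$-equivariance of each $\phi_\alpha$ and of the descent isomorphisms $\psi_{\alpha\beta}$ — and that the perturbation $\xi$ may be chosen compatibly with the semi-perfect chart structure. Once these compatibilities are in place, the degree equality is a formal consequence of the Lagrangian property of $C$ inside the symplectic ambient $\Omega_M$ combined with Behrend's diagram restricted to a neighbourhood of $F$.
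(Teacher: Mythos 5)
Your outline has the right ingredients (the Lagrangian cone $C$, the identification $[C]=\mbox{Ch}(\nu_X)$ from Corollary \ref{cor_cone_Ch}, the cosection support condition forcing $\xi\cap C$ onto $F$, and Proposition \ref{prop_fixed_locus_Behrend_function} to pass from $\chi(X,\nu_X)$ to $\chi(F,\nu_X|_F)$), but the pivotal step is not justified. Your chain
\[
\int_{[X]^{\virt}_{\loc}}1 \;=\; s^!_{\Omega_M}[C] \;=\; \int_X c_0^{SM}(\nu_X) \;=\; \chi(X,\nu_X)
\]
runs through diagram (\ref{Key_Diagram}) and Theorem \ref{thm_MacPherson_index}, both of which presuppose $X$ proper; when $X$ is non-proper the middle terms are degree-zero classes on a non-proper space and have no well-defined degree, and $\int_X \alpha_X = \chi(X,\nu_X)$ simply fails to make sense. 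You correctly flag this as ``the main obstacle,'' but your proposed resolution --- that the equality is ``a formal consequence'' of the Lagrangian property together with Behrend's diagram ``restricted to a neighbourhood of $F$'' --- does not work: restricting to a neighbourhood $U$ of $F$ would at best compute $\chi(X\cap U,\nu_X|_{X\cap U})$, and the weighted Euler characteristic is not local in that sense, so you cannot recover $\chi(X,\nu_X)$ this way. The issues you do propose to check ($\G_m$-invariance of $C$, compatibility of $\xi$ with the charts) are not where the difficulty lies.

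The missing ingredient is a global index theorem: since $C$ is the characteristic cycle of the constructible function $\nu_X$ and the perturbed intersection $\xi\cap C$ is compact (supported on the proper $F$), the intersection number $\#(\xi\cap C)$ equals $\chi(X,\nu_X)$ by Theorem 9.7.11 of Kashiwara--Schapira (and by Maulik--Treumann in the Deligne--Mumford stack case). This is exactly the statement that replaces the proper-case Lagrangian intersection formula of diagram (\ref{Key_Diagram}); it is a substantive microlocal input, not a formal consequence, and it is the step your argument leaves open.
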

\begin{proof}
We only need to show that 
$$\chi(X, \nu_X)=\#(\xi\cap C),$$
the intersection number, but simialr to the 
proof in \cite[\S 4]{JT} or \cite[\S 5.8]{Jiang2}, this is the global index theorem due to Theorem 9.7.11 of Kashiwara-Schpaira \cite{KS} since the characteristic cycle of $\nu_X$ is $C$.  In the Deligne-Mumford stack case, the index theorem is due to Maulik-Treumann in \cite{MT}. 
\end{proof}


\subsection*{}

\end{document}